\newtheorem{assumption}{Assumption}
\newtheorem{example}{Example}
\newtheorem{remark}{Remark}
\newcommand{\df}[0]{\kappa}
\newcommand{\E}[1]{\mathbb{E}\LQ #1\RQ}
\newcommand{\EQ}[1]{\mathbb{E}^{\betab}\LQ #1\RQ}
\newcommand{\Ldue}{L^2(\mathcal{D})}
\newcommand{\LQ}[0]{\left[}
\newcommand{\RQ}[0]{\right]}
\newcommand{\kb}{{\bm k}}
\newcommand{\unob}{\bm{1}}
\newcommand{\jb}{\bm{j}}
\newcommand{\rb}{\bm{r}}
\newcommand{\I}{\mathcal{I}}
\newcommand{\zetab}{\bm{\zeta}}
\newcommand{\eb}[1]{\mathbf{e}_{#1}}
\newcommand{\ib}[0]{\mathbf{i}}
\newcommand{\alphab}[0]{{\bm \alpha}}
\newcommand{\betab}[0]{{\bm \beta}}
\newcommand{\error}{\text{error}}
\newcommand{\stoc}{\text{stoc}}
\newcommand{\dete}{\text{det}}
\newcommand{\work}{\text{work}}
\newcommand{\gb}{\bm{ g}}
\newcommand{\gammab}{\bm{ \gamma}}
\newcommand{\Deltab}{\bm{\Delta}}
\definecolor{Tommaso}{rgb}{0, 0, 1}
\definecolor{Tommaso2}{rgb}{1, 0, 0}
\definecolor{Fabio}{rgb}{1,1,0}
\begin{document}

\begin{center}

\begin{Large}
\textbf{A combination technique for optimal control problems constrained by random PDEs}
\end{Large}

\medskip

Fabio Nobile$^{1}$ and Tommaso Vanzan$^{2}$
\medskip

${}^1$ CSQI Chair, \'Ecole Polytechnique F\'ed\'erale de Lausanne, Switzerland (fabio.nobile@epfl.ch).\\
${}^2$ Dipartimento di Scienze Matematiche, Politecnico di Torino, Italy (tommaso.vanzan@polito.it).

\end{center}
\begin{abstract}
We present a combination technique based on mixed differences of both spatial approximations and quadrature formulae for the stochastic variables to solve efficiently a class of Optimal Control Problems (OCPs) constrained by random partial differential equations.
The method requires to solve the OCP for several low-fidelity spatial grids and quadrature formulae for the objective functional. 
All the computed solutions are then linearly combined to get a final approximation which, under suitable regularity assumptions, preserves the same accuracy of fine tensor product approximations, while drastically reducing the computational cost. The combination technique involves only tensor product quadrature formulae, thus the discretized OCPs preserve the (possible) convexity of the continuous OCP. Hence, the combination technique avoids the inconveniences of Multilevel Monte Carlo and/or sparse grids approaches, but remains suitable for high dimensional problems. The manuscript presents an a-priori procedure to choose the most important mixed differences and an analysis stating that the asymptotic complexity is exclusively determined by the spatial solver. Numerical experiments validate the results.
\end{abstract}

\medskip

\medskip

\section{Introduction}
In this work, we propose a new framework to discretize and solve Optimal Control Problems (OCPs) constrained by random Partial Differential Equations (PDE).
This class of problems is increasingly more popular in the design of complex engineering systems, since a full knowledge of the physical PDE-model is often not available, and the associated uncertainty is frequently modelled through random parameters. Consequently, the topic has drawn much attention in the last decade, see, e.g, the monographs \cite{kouri2018optimization,martinez2018optimal} and references there in.

We consider the following optimization problem with a random PDE-constraint depending on a random vector $\zetab=(\zeta_1,\zeta_2,\dots,\zeta_N)$ taking values in a compact set $\Gamma\subset \mathbb{R}^N$ and with density $\rho$, 
\begin{equation}\label{eq:OCP_model_intro}
\begin{cases}
\min_{u\in U} \E{F(y(\zetab))}+\frac{\nu}{2}\|u\|^2_{U}\\
\text{where } y(\zetab)\in V \text{ solves   }\\
\langle e(y(\zetab),\zetab),v\rangle = \langle Bu,v\rangle \quad\forall v\in V,\text{ }\rho\text{-a.e. } \zetab \in\Gamma,\\
\end{cases}
\end{equation}
where $U$ and $V$ are Hilbert spaces.
The random vector accounts for the uncertainties in the PDE-model, since coefficients, forcing terms, boundary/initial conditions or shape of the domain may not be completely known either due to a lack of knowledge, measurement errors or intrinsic randomness in the system. $F$ is the convex quantity of interest to minimize. Examples are a tracking term which measures the distance of the state $y(\zetab)$ from a desirable state $y_d$, the flux across a part of the boundary, the maximal deflection of a structure under external loads, or the aerodynamic forces over an airfoil. The random PDE (possibly nonlinear) is represented by $e(\cdot,
\cdot):V\times \Gamma\rightarrow V^\prime$, and $B:U\rightarrow V^\prime$ is a linear bounded operator that describes how the control acts on the state (e.g., distributed or boundary control). 

A standard approach to solve \eqref{eq:OCP_model_intro} is based on a Sample Average Approximation (SAA) \cite{shapiro2021lectures}, which consists in replacing the (exact) expectation with an empirical approximation 
\[\E{F(y(\zetab))}\approx \sum_{n=1}^M w_nF(y(\bar{\zetab}_n)),\]
and consequently to collocate the $\rho$-a.e. PDE-constraint onto the set of points $\left\{\bar{\zetab}_n\right\}_{n=1}^M$. The points are randomly generated in a Monte Carlo approach, or chosen deterministically according to low discrepancy sets in a Quasi-Monte Carlo quadrature \cite{kroese2013handbook}. 
The drawback of both approaches is that their discretization error slowly decays to zero with respect to the number of quadrature points $M$, see \cite{Matthieu,doi:10.1137/19M1294952}, 

If the solution map $\zetab \in \Gamma\rightarrow p(\zetab)\in V$, $p$ being the adjoint variable, is sufficiently  smooth, the Stochastic Collocation Method (SCM) \cite{babuvska2010stochastic} is a valid alternative to Monte Carlo based approximations. The SCM approximates the exact expectation in \eqref{eq:OCP_model_intro} with a quadrature formula obtained as the tensor product of one-dimensional (possibly Gaussian) quadrature operators, 
which may lead to the exponential convergence
\[\|u-u_{\text{SC},\betab}\|_{L^2(D)}\leq C_{SC}\sum_{n=1}^N e^{-\widetilde{g}_n\beta_n},\] where $\beta_n$ is the number of quadrature points used for the $n$-th parameter $\zeta_n$, and 
$\widetilde{{\bm g}}=\left\{\widetilde{g}_n\right\}_{n=1}^N$ are a set of coefficients depending on the region of holomorphy of the map $\zetab\in \Gamma \rightarrow p(\zetab)\in V$ in the complex plane \cite{Matthieu,babuvska2010stochastic}. See also \cite{kunoth2013analytic,kunoth2016sparse} for parametric regularity analyses of optimal control problems with parameter dependent (stochastic) control $u=u(\zetab)$.
Despite the exponential convergence with respect to $\beta_n$, the SCM suffers the curse of dimensionality if tensor product grids are used, since the number of collocation points grows exponentially with the dimension $N$ of $\zetab$, being $M=\prod_{i=1}^N \beta_j$.
Sparse approximations have been extensively studied to approximate the state solution map $\zetab\mapsto y(\zetab)$ (see, e.g., \cite{schwab2011sparse,cohen2015approximation,cohen2010convergence,cohen2011analytic}), and may lead to dimension independent convergence rates in favourable cases. In the context of OCP, sparse grids have been used in \cite{kouri2012approach,kouri2013trust} to reduce the number of samples for large values of $N$. On the other hand, \cite{van2019robust} approximates the expectation in \eqref{eq:OCP_model_intro} using the Multilevel Monte Carlo Method (MLMC), which does not necessarily involve less samples, but does reduce the computational cost by exploiting coarser discretization of the random PDE constraint.
However, either approach is not always fully justified, since they both involve negative quadrature weights, which may destroy the convexity of the original OCP \eqref{eq:OCP_model_intro}. Furthermore, even in the simple linear-quadratic case, negative weights introduce difficulties in developing efficient iterative solvers since the positive definiteness of the reduced optimality system cannot be guaranteed (thus preventing the use of conjugate gradient), while the preconditioners studied in \cite{kouri2018inexact,vanzan} for the full-space optimality system do require positive weights.
 
In this manuscript, we propose a Combination Technique (CT) to solve efficiently \eqref{eq:OCP_model_intro}, inspired by the Multi-Index Stochastic Collocation method (MISC) for elliptic random PDEs in the context of forward Uncertainty Quantification (UQ) \cite{haji2016multi,haji2016multi2}, and by the seminal works concerning the solution of parametric high dimensional PDEs \cite{griebel1990combination,pflaum,combination_te}.
The CT relies on a hierarchical representation of the optimal control specified by a set of multi-indices. Each multi-index corresponds to a level of discretization in the parameters $\left\{\zeta_n\right\}_{n=1}^N$ and in the spatial approximation of the PDE constraint. The CT requires to solve the OCP \eqref{eq:OCP_model_intro} several times, but each instance never uses a tensor product grid with many quadrature points and a fine PDE discretization at the same time. All the optimal controls computed on these coarser tensor product grids and meshes are then recombined to get a final approximation. Under suitable regularity assumptions, the CT can attain the same accuracy of the (full tensor) SCM solution, yet with a highly reduced computational cost.
Thus, the CT is a valid alternative to both sparse grids and MLMC approximations, since it effectively reduces the number of quadrature points for high dimensional stochastic input vectors (as sparse grid approximations do), and further reduces the computational cost by moving most of the computational work on coarser spatial grids (as MLMC does). 
Moreover, the CT solves exclusively OCPs discretized on tensor product grids, thus the quadrature weights are positive and all the discretized OCPs are convex.

Key to the performance of the CT method is the choice of the set of multi-indices to be retained in the CT approximation.
We here propose an a-priori construction of such index set based on a scalar quantity, called profit, assigned to each multi-index, as in \cite{haji2016multi,nobile2016convergence,gerstner2003dimension}. The profit of a multi-index measures how advantageous it is to include it into the set, by taking into account both its error and work contributions. Inherently in its construction, the CT approximation balances the spatial and stochastic error discretization according to this profit metric. 
We further present an analysis which characterizes the asymptotic complexity of the CT, under the assumption of exponential convergence of the tensor grid SCM, algebraic convergence of the PDE spatial discretization and finite dimension of the random vector $\zetab$. We show that the asymptotic complexity of the method depends exclusively on the deterministic solver used for the random PDE, as for the MISC method \cite{haji2016multi} for forward UQ. Note that, while the complexity analysis is restricted to the finite dimensional case, the computational framework presented here can be extented to the infinite dimensional setting following, e.g., \cite{ernst2018convergence,chkifa2014high} (see Remark 3).

Finally, albeit this work focuses on the SCM with Gaussian quadrature, we remark that the CT can be easily modified to accommodate other multilevel/multi-index quadrature formulas \cite{harbrecht2012multilevel} such as multilevel/multi-index (Quasi-)Monte Carlo methods (see Remark 2).

The rest of the manuscript is organized as follows. Section \ref{sec:problem_setting} introduces the class of problems considered.
Section \ref{sec:combination_technique} introduces the CT approximation. Section \ref{sec:construction} discusses the construction of the multi-index set which enters into the definition of the CT approximation. Two possible constructions are detailed: an adaptive procedure based on the dimension adaptive algorithm of \cite{gerstner2003dimension}, and an a-priori construction based on some theoretical assumptions.
A theoretical complexity analysis is further presented for the a-priori construction (details of the proofs are reported in the Appendix).
Section \ref{sec:numerical} discusses the validity of the assumptions for a model problem and presents numerical results that show the effectiveness of the CT.

\section{Problem setting}\label{sec:problem_setting}
Let $\mathcal{D}$ be a Lipschitz bounded domain in $\mathbb{R}^d$ and $\zetab=(\zeta_1,\zeta_2,\cdots,\zeta_N)$ a $N$-dimensional random vector whose components are mutually independent, and uniformly distributed on $\Gamma:=\times_{n=1}^N\Gamma_n$, with probability density $\rho(\zetab)=\prod_{n=1}^N \rho_n(\zeta_n)d\zeta_n$, $\rho_n(\zeta_n)=\frac{1}{|\Gamma_n|}$. Let $\sigma_B(\Gamma)$ be the Borel $\sigma$-algebra over $\Gamma$, and $(\Gamma,\sigma_B(\Gamma),\rho(\zetab)d\zetab)$ the complete probability space with uniform measure. In this manuscript, we consider the random PDE in weak form
\begin{equation}\label{eq:state}
\begin{aligned}
\langle e(y(\zetab),\zetab),v\rangle &= \langle \phi+Bu,v\rangle \quad\forall v\in V,\text{ }\rho\text{-a.e. } \zetab \in\Gamma,\\
\end{aligned}
\end{equation}
where $u\in U$, and $V$ and $U$ are appropriate separable Hilbert spaces.
\begin{assumption}\label{ass:well-posedness}
Problem \eqref{eq:state} is well-posed for $\rho$-a.e. $\zetab\in \Gamma$ and its solution, interpreted as a Hilbert space-valued function $\zetab\in \Gamma \mapsto y(\zetab)\in V$, belongs to the Bochner space 
\[L_{\rho}^p(\Gamma;V):=\left\{u:\Gamma\rightarrow V,\ \text{strongly measurable s.t. } \int_{\Gamma} \|u(\zetab)\|^p_V d\rho(\zetab)<\infty\right\},\]
for some $p\geq 1$. Further, \eqref{eq:state} defines a G\^{a}teaux differentiable operator $S_{\zetab}:U\rightarrow V$ such that $y(\zetab)=S_{\zetab}(u)$ for $\rho$-a.e. $\zetab$.
\end{assumption}

In several applications the interest lies in computing some convex and Fr\'echet differentiable quantity of interest of the random solution $y(\zetab)$, which we denote by $F:L_{\rho}^p(\Gamma;V)\rightarrow L_{\rho}^q(\Gamma;\mathbb{R})$, for some $q\geq 1$. The goal is to optimally control the system, by minimizing the expected value of the quantity of interest, through an external control $u\in U$. An additional penalization on the energy of the control $u$ can be added. In mathematical terms, this translates into the optimal control problem constrained by the random PDE
\begin{equation}\label{eq:OCP}
\begin{cases}
\min_{u\in U} \E{F(y(\zetab))}+\frac{\nu}{2}\|u\|^2_{U}\\
\text{where } y(\zetab)\in V \text{ solves   }\\
\langle e(y(\zetab),\zetab),v\rangle = \langle \phi+ Bu,v\rangle \quad\forall v\in V,\text{ }\rho\text{-a.e. } \zetab \in\Gamma,\\
\end{cases}
\end{equation}
with $\nu\in\mathbb{R}^+$. We denote the reduced objective functional by $J:U\rightarrow \mathbb{R}$, with $J(u):=\E{F(S_{\zetab}(u))}+\frac{\nu}{2}\|u\|_U^2$.
We make the following working assumption. 
\begin{assumption}\label{ass:well-posedness_OCP}
Problem \eqref{eq:OCP} admits a solution $u\in U$ which satisfies the optimality condition
\begin{equation}\label{eq:optimality_condition}
\langle J^\prime(u),v\rangle=\langle \nu u -B^\star \E{ p},v\rangle =0,\quad \forall v\in U,
\end{equation}
where $B^\star$ is the adjoint operator of $B$, and the state and adjoint variables $(y,p)$ satisfy the optimality system
\begin{align*}
\langle e(y(\zetab),\zetab),v\rangle&=\langle \phi+ B B^\star \frac{\E{p}}{\nu},v\rangle \quad\forall v\in V,\text{ }\rho\text{-a.e. } \zetab \in\Gamma,\\
\langle e^\star(p(\zetab),\zetab),v\rangle&=\langle F^\prime(y(\zetab)),v\rangle \quad\forall v\in V,\text{ }\rho\text{-a.e. } \zetab \in\Gamma,
\end{align*}
$F^\prime$ being the Fr\'{e}chet derivative of $F$, and $e^\star$ being the adjoint operator of $e$.
Further, $J$ is twice Fr\'{e}chet differentiable, and its Hessian evaluated in $u$ is positive definite, with bounded inverse, and locally Lipschitz in a neighboorhood of $u$.
\end{assumption}
Sufficient conditions for the validity of Assumption \ref{ass:well-posedness_OCP} can be found in, e.g., \cite{kouri2018optimization,doi:10.1137/16M1086613,martinez2018optimal}.

\begin{example}\label{example:1}
To fix ideas, the reader may consider the elliptic random PDE that has been extensively studied in the context of UQ \cite{lord_powell_shardlow_2014,babuvska2010stochastic,cohen2015approximation},  namely
\begin{equation}\label{eq:state_example}
\begin{aligned}
\langle e(y(\zetab),\zetab),v\rangle=\int_\mathcal{D} \df(x,\zetab)\nabla y(x,\zetab)\cdot \nabla v(x)dx &= \int_\mathcal{D} \phi(x)v(x)dx \quad\forall v\in V,\text{ }\rho\text{-a.e. } \zetab \in\Gamma.\\
\end{aligned}
\end{equation}
where $V=H^1_0(\mathcal{D})$.
If the random diffusion field $\df$ is strictly positive and bounded with probability $1$, i.e. there exists $\df_{\min}>0$ and $\df_{\max}<\infty$ such that 
\begin{equation}
P(\df_{\min}\leq \df(x,\zetab)\leq \df_{\max},\forall x\in \overline{\mathcal{D}})=1,
\end{equation}
then \eqref{eq:state} is well-posed and $y\in L^\infty(\Gamma;V)$.
As an instance of \eqref{eq:OCP}, we may consider the tracking-type problem
\begin{equation}\label{eq:OCP_example}
\begin{cases}
\min_{u\in \Ldue} J(u)=\frac{1}{2}\E{\|y(\zetab)-y_d\|^2_{\Ldue}} +\frac{\nu}{2}\|u\|^2_{\Ldue},\\
\text{where } y(\zetab)\in V \text{ solves   }\\
\int_\mathcal{D} \df(x,\zetab)\nabla y(x,\zetab)\cdot \nabla v(x)dx = \int_\mathcal{D} (\phi(x)+u(x))v(x)dx \quad\forall v\in V,\text{ }\rho\text{-a.e. } \zetab \in\Gamma,
\end{cases}
\end{equation}
for which Assumption \ref{ass:well-posedness_OCP} is verified, see \cite[Section 6]{kouri2018optimization}.
\end{example}

\section{Combination technique}\label{sec:combination_technique}

In this section we present the CT to approximate the solution of \eqref{eq:OCP}. To do so, we introduce two multi-indices $\betab$ and $\alphab$. The first one is associated to the quadrature formula used in the stochastic space, while the second one defines the level of approximation with respect to the physical variables.
 
Let $\betab=(\beta_1,\beta_2,\dots,\beta_N)\in \mathbb{N}_+^N$, $\mathbb{N}_+:=\left\{1,2,\dots\right\}$, be a multi-index, $m:\mathbb{R}^+\rightarrow \mathbb{R}^+$ a strictly increasing function, and $\left\{\mathcal{Q}_j^{\beta_j}\right\}_{j=1}^N$ a set of one-dimensional quadrature operators defined as 
\begin{equation}\label{eq:1D_quadrature}
\mathcal{Q}_j^{\beta_j}[F(y(\zetab))]=\sum_{n_j=1}^{m(\beta_j)} w^{\beta_j}_{n_j} F(y(\zeta_1,\dots,\zeta_{j-1},\bar{\zeta}^{\beta_j}_{n_j},\zeta_{j+1},\dots,\zeta_N))\approx \int_{\Gamma_j} F(y(\zeta_1,\dots,\zeta_N))\rho_j(\zeta_j)d\zeta_{j}.
\end{equation}
where $(\bar{\zeta}^{\beta_j}_{n_j},w^{\beta_j}_{n_j})_{n_j=1}^{m(\beta_j)}$ are the nodes and weights of the $\beta_j$-th quadrature formula. To ensure good approximation properties, the nodes are usually chosen according to the underlying probability measure $\rho_n(\zeta_n)d\zeta_n$. Standard choices for uniform random variables are Gauss-Legendre, Clenshaw-Curtis, or Leja points \cite{reichel1990newton,trefethen2008gauss}. The map $m(\cdot)$ specifies how the number of quadrature nodes depends on $\beta_j$. Common choices are the linear map $m(\beta_j)=\beta_j$, or the doubling map $m(\beta_j)=2^{\beta_j}-1$, see \cite{nobile2016convergence}. The multi-dimensional quadrature operator is directly obtained as the tensorization of the one-dimensional ones,
\begin{equation}\label{eq:tensorized_quadrature_formula}
\EQ{F(y(\zetab))}:=(\mathcal{Q}_1^{\beta_1}\otimes\cdots \otimes \mathcal{Q}_N^{\beta_N})[F(y(\zetab))]= \sum_{n_1=1}^{m(\beta_1)}\dots\sum_{n_N=1}^{m(\beta_N)} F(y(\bar{\zeta}^{\beta_1}_{n_1},\dots,\bar{\zeta}^{\beta_N}_{n_N}))w^{\beta_1}_{n_1}\cdots w^{\beta_N}_{n_N}.
\end{equation}
Denoting with $\Lambda_{\betab}\subset \Gamma$ the set of all quadrature nodes in \eqref{eq:tensorized_quadrature_formula} associated to the multi-index $\betab$, the semi-discretization of \eqref{eq:OCP} is 
\begin{equation}\label{eq:OCP_semidiscrete}
\begin{cases}
\min_{u\in U} \frac{1}{2}\EQ{F(y(\zetab))} +\frac{\nu}{2}\|u\|^2_{U},\\
\text{where } y(\zetab)\in V \text{ solves   }\\
\langle e(y(\zetab),\zetab),v\rangle = \langle \phi+Bu,v\rangle,\quad \forall v\in V,\ \forall \zetab \in \Lambda_{\betab}.
\end{cases}
\end{equation}

Next, let $\alphab=(\alpha_1,\alpha_2,\cdots,\alpha_D)\in \mathbb{N}^D$ be a second multi-index. In general, $D$ does not necessarily coincide with the number of physical dimensions. The multi-index $\alphab$ can tune the level of discretization along each physical dimension for meshes whose level of refinement can be set independently for each dimension (e.g., meshes obtained as the tensor product of one-dimensional meshes, structured meshes with a regular connectivity, or more general domains discretized using isogeometric analysis, see \cite{beck2019iga}). Otherwise, $\alphab$ can be a simple scalar which tunes the reference mesh-size across the whole domain $\mathcal{D}$, as in classical multilevel methods \cite{harbrecht2012multilevel,teckentrup2015multilevel,van2019robust}.  In a broader picture, $\alphab$ could even represent time-steps or describe a general set of multi-fidelity models \cite{haji2016multi2,peherstorfer2018survey}. 
Nevertheless, for the sake of this manuscript, we suppose that the $j$-th component of $\alphab$ determines the mesh size $h_{j,\alpha_j}$ along the $j$-th physical dimension, and we denote with $U^{\alphab}$ and $V^{\alphab}$ the finite-dimensional approximations (e.g, finite element approximations on a mesh specified by $\alphab$) of $U$ and $V$. 

Now, let $u^{\alphab,\betab}$ be a solution of the fully discrete OCP
\begin{equation}\label{eq:OCP_definition}
\begin{cases}
\min_{u\in U^{\alphab}} \frac{1}{2}\EQ{F(y(\zetab))} +\frac{\nu}{2}\|u\|^2_{U},\\
\text{where } y(\zetab)\in V^{\alphab} \text{ solves   }\\
\langle e(y(\zetab),\zetab),v\rangle = \langle \phi+Bu,v\rangle,\quad \forall v\in V^{\alphab},\ \forall \zetab \in \Lambda_{\betab},
\end{cases}
\end{equation}
which can be computed with any suitable optimization algorithm, see, e.g, \cite{hinze2008optimization}, \eqref{eq:OCP_definition} being a finite-dimensional optimization problem.
We assume that $u^{\alphab,\betab}\xrightarrow{U} u$ as $(\alphab,\betab)\rightarrow \infty$ (component-wise). Using, e.g., consistent finite element and stochastic collocation discretizations on tensor product grids, this is verified in the linear-quadratic case due the global convexity of the discretized objective functional, see, e.g., \cite{doi:10.1137/19M1294952}. For a general formulation such as \eqref{eq:OCP}, which may involve local minima, sufficient conditions are provided by the Brezzi-Rappaz-Raviart theory \cite{brezzi1980finite} applied to the first order optimality conditions, namely the discretized gradient evaluated in $u$ converges in norm to zero,  the discretized objective functional is twice differentiable, whose Hessian evaluated in $u$ is positive definite, with a uniformly (with respect to $(\alphab,\betab)$) bounded inverse, and locally Lipschitz continuous in a neighboorhod of $u$. 

Next, we introduce the first order difference operators for the deterministic and stochastic discretization parameters
\begin{align}
&\Delta_i^{\dete}[u^{\alphab,\betab}]=\begin{cases}
u^{\alphab,\betab}-u^{\alphab-\eb{i},\betab},\quad &\text{if }\alpha_i>1,\\
u^{\alphab,\betab},\quad &\text{if }\alpha_i=1,
\end{cases}\\
&\Delta_i^{\stoc}[u^{\alphab,\betab}]=\begin{cases}
u^{\alphab,\betab}-u^{\alphab,\betab-\eb{i}},\quad &\text{if }\beta_i>1,\\
u^{\alphab,\betab},\quad &\text{if }\beta_i=1,
\end{cases}
\end{align}
where $\eb{i}$ is the i-th canonical vector.
The deterministic and stochastic hierarchical surpluses are defined as the tensorized product of their respective first-order differences,
\begin{align}
\Delta^{\dete}[u^{\alphab,\betab}]&=\otimes_{i=1}^{D} \Delta_i^{\dete}[u^{\alphab,\betab}]=\Delta_1^{\dete}\bigg[\Delta_2^{\dete}\bigg[\dots \Delta_D^{\dete} [u^{\alphab,\betab}]\bigg]\bigg]=\sum_{\jb\in \left\{0,1\right\}^D} (-1)^{|\jb|}u^{\alphab-\jb,\betab},\label{eq:deterministic_surplus}\\
\Delta^{\stoc}[u^{\alphab,\betab}]&=\otimes_{i=1}^{N} \Delta_i^{\stoc}[u^{\alphab,\betab}]=\Delta_1^{\stoc}\bigg[\Delta_2^{\stoc}\bigg[\dots \Delta_N^{\stoc} [u^{\alphab,\betab}]\bigg]\bigg]=\sum_{\jb\in \left\{0,1\right\}^N} (-1)^{|\jb|}u^{\alphab,\betab-\jb},\label{eq:stochastic_surplus}
\end{align}
where we used the standard notation $u^{\alphab,\betab}=0$ if any of the $\alpha_j$ or $\beta_j$ is zero. Moreover $|\cdot|$ denotes the $l_1$ norm of the vector (in this case, the number of nonzero components).

Given a multi-index set $\I\subset \mathbb{N}_+^{D+N}$, the CT approximation $\mathcal{M}_{\I}(u)$ for the optimal control $u$ solution of \eqref{eq:OCP} is defined as the truncated telescopic sum,
\begin{align}\label{eq:CT_estimator}
u=\sum_{(\alphab,\betab)\in \mathbb{N}^{N+D}_{+}} \Deltab\big[ u^{\alphab,\betab}\big]\approx \sum_{(\alphab,\betab)\in \I} \Deltab\big[ u^{\alphab,\betab}\big]=: \mathcal{M}_{\I}(u),
\end{align}
where $\Deltab\big[ u^{\alphab,\betab}\big]:=\Delta^{\stoc}\big[ \Delta^{\dete}\big[u^{\alphab,\betab}\big]\big]$. In this work we \textit{assume} that the infinite series converges absolutely in $U$, namely that $u^{\alphab,\betab} \xrightarrow{U} u$ as $(\alphab,\betab)\rightarrow \infty$ (component-wise) and that $\left\{\| \Delta u^{\alphab,\betab}\|_{U}\right\}_{(\alphab,\betab)\in \mathbb{N}_+^{N+D}}\in \ell^1(\mathbb{N}_+^{N+D})$. In the rest of the manuscript, we assume the existence of specific bounds in Assumption \ref{ass:error}, and carry out a complexity analysis under these hypotheses in Section \ref{sec:complexity}. Their validity is checked numerically on an example in Section \ref{sec:numerical}.

\begin{remark}[On the convergence of the infinite series]
Sufficient conditions for the convergence of the minimizers of the discretized OCPs to $u$ have been previously recalled. 
On the other hand, the summability of the norms of the hierarchical surpluses may require additional hypotheses, and its
analysis involves the cumbersome analytical derivation of suitable bounds. This is the subject of current endeavours in the linear-quadratic case. We remark that the validity of similar hierarchical expansions has been demonstrated in several context and it relies on the mix-regularity with respect to the spatial and stochastic variables \cite{griebel1990combination,combination_te,MIMC,nobile2016convergence,haji2016multi2}.
\end{remark}

As in sparse grids \cite{bungartz2004sparse}, the rational behind the truncation is that not all hierarchical surpluses $\Deltab\big[ u^{\alphab,\betab}\big]$ have the same relevance. Therefore, under suitable  assumptions (discussed in Section \ref{sec:apriori}), it is possible to retain only a few hierarchical surpluses while preserving accuracy at a reduced computational cost. The CT approximation \eqref{eq:CT_estimator} admits the following equivalent formulation \cite{bungartz2004sparse}
\begin{align}\label{eq:CT_estimator_equiv}
\mathcal{M}_{\I}(u)=\sum_{(\alphab,\betab)\in \I} c_{\alphab,\betab} u^{\alphab,\betab},\qquad c_{\alphab,\betab}=\sum_{\jb\in \left\{0,1\right\}^{N+D}: (\alphab,\betab)+\jb \in \I} (-1)^{|\jb|}
\end{align}
which motivates the name ``combination technique" since $\mathcal{M}_{\I}(u)$ is obtained as a combination of the solution of \eqref{eq:OCP_definition} for different multi-indices $(\alphab,\betab)$.  Further, the coefficients $c_{\alphab,\betab}$ are zero whenever $(\alphab,\betab)+\jb\in \I$, $\forall \jb\in\left\{0,1\right\}^{N+D}$. Hence, only a few multi-indices $(\alphab,\betab)\in \I$ actually contribute to the CT approximation.

\begin{remark}[Sparse grids vs Combination technique]\label{remark:sparse_grids}
Sparse grids approximations of OCPs under uncertainty \cite{kouri2012approach,kouri2013trust} rely on first order differences of the one-dimensional quadrature operators
\[\widetilde{\Delta}_n\LQ \mathcal{Q}_n^{\beta_n}\RQ := \mathcal{Q}_n^{\beta_n}-\mathcal{Q}_n^{\beta_n-1},\quad n=1,\dots,N,\] 
and on the related hierarchical surpluses $\widetilde{\Delta}\LQ\mathcal{Q}^{\betab}\RQ:=\left(\widetilde{\Delta}_1\otimes\cdots\otimes  \widetilde{\Delta}_N\right)\LQ \mathcal{Q}^{\betab}\RQ$.
The sparse grid quadrature is then $\widetilde{\mathbb{E}}_{\mathcal{I}}=\sum_{\betab\in \mathcal{I}} \widetilde{\Delta}\LQ\mathcal{Q}^{\betab}\RQ$ and
the sparse grid semi-discrete OCP reads
\begin{equation}\label{eq:OCP_sparsegrids}
\begin{cases}
\min_{u\in U} \frac{1}{2}\widetilde{\mathbb{E}}_{\mathcal{I}}\LQ F(y(\zetab))\RQ +\frac{\nu}{2}\|u\|^2_{U},\\
\text{where } y(\zetab)\in V \text{ solves   }\\
\langle e(y(\zetab),\zetab),v\rangle = \langle \phi+u,v\rangle,\quad \forall v\in V,\ \forall \zetab \in \widetilde{\Lambda}_{\betab},
\end{cases}
\end{equation}
where $\widetilde{\Lambda}_{\betab}$ collects all the quadrature points involved by the sparse grid quadrature. It is well-known that the weights of sparse grids quadrature formulae can be negative. This may break the convexity of the OCP.
In contrast, if we fix a mesh discretization, the CT replaces the sparse approximation of the expectation operator with a hierarchical representation of $u$, using first order differences of the solution $u^{\betab}$  of the OCP computed with the tensor product quadrature determined by $\betab$. Hence, each OCP involves only quadratures with positive weights and the (reduced on $u$) discrete OCPs remain convex.
\end{remark}

\begin{remark}[Multilevel/multi-index (Quasi-)Monte Carlo methods]
The CT framework covers as particular instances other multilevel quadrature rules \cite{harbrecht2012multilevel}. For instance, let $\alpha$ be a scalar ($D=1$) setting the overall mesh size. Then, given a finest approximation level $L$, we may consider the telescopic sum
\[u\approx u^L=\sum_{\alpha=1}^{L} \Delta_1^{\det}\LQ u^{\alpha}\RQ = \frac{1}{\nu}B^\star\sum_{\alpha=1}^{L} \E{p^{\alpha}-p^{\alpha-1}}.\]
The expectation of the difference $p^{\alpha}-p^{\alpha-1}$ is now prone to be approximated with a level-dependent (Quasi-)Monte Carlo quadrature formula obtaining a multilevel (Quasi-)Monte Carlo method to solve  
\eqref{eq:OCP}, which again preserve the (possible) convexity by avoiding negative quadrature weights, since $p^{\alpha}$ and $p^{\alpha-1}$ are the adjoint variables of two distinct OCPs discretized with the same (Quasi-)Monte Carlo samples but on different spatial meshes.
Notice that if $\alphab\in \mathbb{N}^D$, $D>1$, the CT can accommodate the multi-index (Quasi-)Monte Carlo method \cite{MIMC,doi:10.1137/16M1082561}.
\end{remark}

\section{Construction of the multi-index set $\I$}\label{sec:construction}
It is clear that the multi-index set $\I$ determines the computational efficiency of the CT approximation. Heuristically, $\I$ should contain very few multi-indices that simultaneously lead to fine physical discretizations and to quadrature formulae with a large number of quadrature nodes, since they would require the solution of very expensive OCPs. In contrast, $\I$ should contain ``sparse" multi-indices which lead to fine discretizations/high level quadrature only across very few spatial and stochastic dimensions, while having coarse spatial/stochastic discretizations for most of the variables. 
  
A well-known strategy to find a (quasi-)optimal multi-index set $\I$ is to recast its construction as a knapsack optimization problem, see, e.g., \cite{gerstner2003dimension,haji2016multi,nobile2016convergence}.
To do so, we introduce the concepts of work contribution and error contribution associated to the hierarchical surplus $\Deltab\big[u^{\alphab,\betab}\big]$. 
The work contribution $\Deltab W_{\alphab,\betab}$ measures the computational cost required to add $\Deltab\big [u^{\alphab,\betab}\big]$ to $\mathcal{M}_{\I}(u)$. 
In formulae we set
\[\Deltab W_{\alphab,\betab}=\text{Work}\big[ \mathcal{M}_{\I\cup (\alphab,\betab)}(u)\big]-\text{Work}\big[ \mathcal{M}_{\I}(u)\big]=\text{Work}\big[ \Deltab \big[ u^{\alphab,\betab}\big]\big],\]
which implies
\[\text{Work}\big[\mathcal{M}_{\I}(u)\big]=\sum_{(\alphab,\betab)\in\I}\Deltab W_{\alphab,\betab}.\]
The error contribution $\Delta E_{\alphab,\betab}$ measures instead how much the error $u-\mathcal{M}_{\I}(u)$ varies if $\Deltab\big [u^{\alphab,\betab}\big]$
is added to the estimator $\mathcal{M}_{\I}$,
\[\Deltab E_{\alphab,\betab}=\|u-\mathcal{M}_{\I\cup (\alphab,\betab)}-u+\mathcal{M}_{\I}\|_{U}=\|\mathcal{M}_{\I\cup(\alphab,\betab)}(u)-\mathcal{M}_{\I}(u)\|_{U}=\|\Deltab [u^{\alphab,\betab}]\|_{U}.\]
It follows that the error of the CT approximation is bounded by the sum of error contributions not included in the set $\I$, that is,
\begin{align}\label{eq:error_bound}
\text{Error}\big[\mathcal{M}_{\I}(u)\big]=\|u-\mathcal{M}_{\I}(u)\|_{U}&=\left\|\sum_{(\alphab,\betab)\notin \I} \Deltab\big[u^{\alphab,\betab}\big]\right\|_{U}\leq \sum_{(\alphab,\betab)\notin \I} \Deltab E_{\alphab,\betab},
\end{align}
provided that the series converges.
To construct $\I$, we wish to solve a binary knapsack problem, i.e. to maximize the sum of error contributions included in $\I$ subject to a maximum work $W_{\max}$ available,
\begin{equation}\label{eq:knapsack_problem}
\begin{aligned}
\max\limits_{\chi_{\alphab,\betab}\in \left\{0,1\right\}}&  \sum_{(\alphab,\betab)\in \mathbb{N}_+^{D+N}} \Delta E_{\alphab,\betab} \chi_{\alphab,\betab},\\
\text{s.t. }& \sum_{(\alphab,\betab)\in \mathbb{N}_+^{D+N}} \Delta W_{\alphab,\betab} \chi_{\alphab,\betab}\leq W_{\max}.
\end{aligned}
\end{equation}
Problem \eqref{eq:knapsack_problem} leads to a \textit{quasi-optimal} multi-index set $\I$, as the sum of the errors contributions is only an upper bound of the actual error of the CT approximation. Moreover, the solution of \eqref{eq:knapsack_problem} is computationally unfeasible. A valid alternative is to relax the integer constraint on $\chi_{\alphab,\betab}$, and solve the relaxed problem using the Dantzig algorithm \cite{martello1990knapsack}, which requires to
\begin{itemize}
\item[1] For each hierarchical surplus, compute the profit $P_{\alphab,\betab}=\frac{\Delta E_{\alphab,\betab}}{\Delta W_{\alphab,\betab}}$.
\item[2] Sort the hierarchical surpluses by decreasing profit.
\item [3] Add the hierarchical surpluses to the multi-index set $\I$ in such order, until the work constraint is satisfied.
\end{itemize}

Given a tolerance $\varepsilon$, the quasi-optimal multi-index set is thus given by 
\begin{equation}\label{eq:multiindex_set}
\I(\varepsilon)=\left\{(\alphab,\betab)\in \mathbb{N}_+^{N+D}:\ P_{\alphab,\betab}\geq \varepsilon\right\}.
\end{equation}

Notice that the above discussion assumes that the error and work contributions are known, but this is not generally the case.
To build $\I$ in practice, a possible solution is an adaptive procedure, as the one proposed in \cite{gerstner2003dimension}. The general algorithm is described in Alg. \ref{alg:1}. 
\begin{algorithm}[]
\setlength{\columnwidth}{\linewidth}
\caption{Adaptive incremental construction of $\I$.}
\begin{algorithmic}[1]\label{alg:1}
\REQUIRE Tolerance $\varepsilon>0$. \\
\STATE $\mathbf{v}=(1,\dots,1)\in \mathbb{N}_+^{D+N}$.\\
\STATE $\mathcal{E}=\left\{\mathbf{v}\right\}$, $\I=\emptyset$, $\mathcal{M}_{\I}\LQ u\RQ=0,$ $\text{Err}=0$. \\
\STATE Compute $u^{\mathbf{v}}$.
\STATE Compute $\Delta\LQ u^{\mathbf{v}}\RQ$, $\Delta E_{\mathbf{v}}$, $\Delta W_{\mathbf{v}}$ and $P_{\mathbf{v}}$ numerically.\\
\STATE Set $\text{Err}=\text{Err}+\Delta E_{\mathbf{v}}$.
\WHILE{$\text{Err}>\varepsilon$}
\STATE Select multi-index $\mathbf{v}=\text{argmax}_{\bm{\ell}\in \mathcal{E}} P_{\bm{\ell}}$. \\
\STATE Set $\mathcal{M}_{\I}\LQ u\RQ =\mathcal{M}_{\I}\LQ u\RQ + \Delta\LQ u^{\mathbf{v}}\RQ$.
\STATE Set $\I=\I\cup \mathbf{v}$, $\mathcal{E}=\mathcal{E}\setminus \mathbf{v}$ and $\text{Err}=\text{Err}-\Delta E_{\mathbf{v}}$.\\
\FOR{$k=1,\dots ,D+N$}
\STATE $\mathbf{l}=\mathbf{v}+\mathbf{e}_k$.\\
\IF{$\mathbf{l}-\mathbf{e}_i\in \I$ for every $i=1,\dots D+N$}
\STATE $\mathcal{E}=\mathcal{E}\cup \mathbf{l}$.\\ 
\STATE Compute $u^{\mathbf{l}}$.
\STATE Compute $\Delta\LQ u^{\mathbf{l}}\RQ$, $\Delta E_{\mathbf{l}}$, $\Delta W_{\mathbf{l}}$ and $P_{\mathbf{l}}$ numerically.\\
\STATE Set $\text{Err}=\text{Err}+\Delta E_{\mathbf{l}}$.
\ENDIF
\ENDFOR
\ENDWHILE
\STATE Output: $\I$, $\mathcal{M}_{\I}\LQ u\RQ$ and $\text{Err}$.
\end{algorithmic}
\end{algorithm}

This iterative algorithm computes numerically the error contributions $\Delta E_{\alphab,\betab}=\|\Delta u^{\alphab,\betab}\|_{U}$ for every multi-index $(\alphab,\betab)$ in the reduced margin of a current index set, where the reduced margin of a set $\I$ of multi-indices in $\mathbb{N}_+^n$ is
\[\mathcal{R}_\I:=\left\{\mathbf{v}\in \mathbb{N}_+^n:\ \mathbf{v}-\eb{k}\in \I,\text{ for all }k\in \left\{1,\dots,n\right\}, \text{ with }v_k>1\right\}.\]
The algorithm then updates $\I=\I\cup (\bar{\alphab},\bar{\betab})$, where $(\bar{\alphab},\bar{\betab})$ is the multi-index in $\mathcal{R}_\I$ associated to the largest profit. The adaptive construction ends when the sum of the error contributions in the reduced margin is below a desired tolerance, mimicking the error upper bound \eqref{eq:error_bound}.

Despite its simplicity this adaptive algorithm may be not efficient. The cost of computing the CT approximation may be dominated by the cost of constructing the index set $\I$, since evaluating numerically an error contribution (line 15 of Alg. \ref{alg:1}) for a given multi-index in the reduced margin requires to solve an OCP (line 14). Notice that once $u^{\mathbf{v}}$ has been computed, $\Delta u^{\mathbf{v}}$ requires only a linear combination of previously computed solutions.
A possible alternative is to use an a-priori ansatz for $\Delta E_{\alphab,\betab}$ and $\Delta W_{\alphab,\betab}$, so that the computation of the profits in the margin is reduced to the evaluation of a simple algebraic formula, and an OCP is solved exclusively when a selected multi-index is added to $\I$. This variant is described by Alg. \ref{alg:2}.
We will denote the construction of $\I$ based on these adaptive/a-priori frameworks as ``incremental CT". 

\begin{algorithm}[]
\setlength{\columnwidth}{\linewidth}
\caption{A-priori incremental construction of $\I$.}
\begin{algorithmic}[1]\label{alg:2}
\REQUIRE Tolerance $\varepsilon>0$. \\
\STATE $\mathbf{v}=(1,\dots,1)\in \mathbb{N}_+^{D+N}$.\\
\STATE $\mathcal{E}=\left\{\mathbf{v}\right\}$, $\I=\emptyset$, $\mathcal{M}_{\I}\LQ u\RQ=0,$ $\text{Err}=0$. \\
\STATE Compute $\Delta E_{\mathbf{v}}$, $\Delta W_{\mathbf{v}}$ and $P_{\mathbf{v}}$ using ansatzes.\\
\STATE Set $\text{Err}=\text{Err}+\Delta E_{\mathbf{v}}$.
\WHILE{$\text{Err}>\varepsilon$}
\STATE Select multi-index $\mathbf{v}=\text{argmax}_{\bm{\ell}\in \mathcal{E}} P_{\bm{\ell}}$. \\
\STATE Compute $\mathbf{u}^{\mathbf{v}}$ and $\Delta \LQ u^{\mathbf{v}}\RQ$.
\STATE Set $\mathcal{M}_{\I}\LQ u\RQ =\mathcal{M}_{\I}\LQ u\RQ + \Delta\LQ u^{\mathbf{v}}\RQ$.
\STATE $\I=\I\cup \mathbf{v}$ and $\mathcal{E}=\mathcal{E}\setminus \mathbf{v}$ and $\text{Err}=\text{Err}-\Delta E_{\mathbf{v}}$.\\
\FOR{$k=1,\dots D+N$}
\STATE $\mathbf{l}=\mathbf{v}+\mathbf{e}_k$.\\
\IF{$\mathbf{l}-\mathbf{e}_i\in \I$ for every $i=1,\dots D+N$}
\STATE $\mathcal{E}=\mathcal{E}\cup \mathbf{l}$.\\ 
\STATE Compute $\Delta E_{\mathbf{l}}$, $\Delta W_{\mathbf{l}}$ and $P_{\mathbf{l}}$ using ansatzes.\\
\STATE Set $\text{Err}=\text{Err}+\Delta E_{\mathbf{l}}$.
\ENDIF
\ENDFOR
\ENDWHILE
\STATE Output: $\I$ and $\mathcal{M}_{\I}\LQ u\RQ$.
\end{algorithmic}
\end{algorithm}

We remark that a-priori ansatzes on $\Delta E_{\alphab,\betab}$ and $\Delta W_{\alphab,\betab}$ permit to build the multi-index $\I$ set beforehand using formula \eqref{eq:multiindex_set}. In this way, one could use the CT formula \eqref{eq:CT_estimator_equiv}, and reduce the number of OCPs that have to be solved, by considering only the multi-indices such that $c_{\alphab,\betab}\neq 0$. This approach is more efficient but less flexible, since it is inherently non-adaptive. If $\mathcal{M}_{\I(\varepsilon)}[u]$ has been computed, it may not be possible to evaluate $\mathcal{M}_{\I(\varepsilon^\prime)}[u]$, with $\varepsilon^\prime >\varepsilon$, as 
$\mathcal{M}_{\I(\varepsilon^\prime)}[u]$ may involve multi-indices that were inactive (i.e. $c_{\alphab,\betab}= 0$) for $\mathcal{M}_{\I(\varepsilon)}[u]$.
Further, it does not have an intrinsic stopping criterium contrary to the adaptive/incremental CT procedure.
One could also use a-posteriori error estimators for $\Delta E_{\alphab,\betab}$, generalizing the work of \cite{guignard2018posteriori} for random PDEs. However, the derivation of suitable a-posteriori error estimators for OCP constrained by random partial differential equations combined with the CT is still an open problem.

In the next subsection, we assume explicit analytic expressions for the error and work contributions and discuss an a-priori construction of the quasi-optimal set $\I$. These ansatzes contain few parameters that are problem-dependent, and have to be estimated case by case. A complexity analysis of the CT approximation based on this proposed a-priori construction is presented in subsection \ref{sec:complexity}.
The soundness of these expressions have been verified theoretically in the context of random PDEs under suitable regularity assumptions, see \cite{haji2016multi2}. We will verify numerically their validity on a model problem in Section \ref{sec:validation_of_hypothesis}.

\subsection{An a-priori construction}\label{sec:apriori}

To build the set $\I$ in a-priori fashion, we need some hypothesis on the decay of the error and work contributions.
Following the framework proposed in \cite{haji2016multi} for the MISC method, we make the following assumptions.
\begin{assumption}\label{ass:mesh_dependence_linear_map}
The discretization parameters $h_{n,\alpha_n}$ depend exponentially on the discretization level $\alpha_n$, while the number of collocation points grows linearly with the stochastic level $\beta_n$,
\begin{equation}
h_{n,\alpha_n}=h_02^{-\alpha_n},\quad\text{and}\quad m(\beta_n)=\beta_n.
\end{equation}
\end{assumption}
Notice that we assume that the level-to-nodes relation $m(\beta_n)$ is linear, in contrast to several works concerning random PDEs, see, e.g., \cite{haji2016multi,nobile2016convergence}, where the doubling map $m(\beta_n)=2^{\beta_n}-1$ is considered. 
The doubling map is often used together with Clenshaw-Curtis nodes, since the quadrature/interpolation nodes are then nested, which permits to recycle several pre-calculated solutions when computing the quadrature/interpolation for a higher level $\betab$.
In our setting, each multi-index $\betab$ requires the solution of a robust OCP, and none of the previously computed optimal controls (neither state or adjoint variables) can be recycled.
Numerically, we have observed that the linear and doubling mapping lead to the same convergence rate, but the former has a better constant.

In addition to $\Delta W_{\alphab,\betab}$ and $\Delta E_{\alphab,\betab}$ associated to the mixed hierarchical surpluses, for a fixed $\bar{\betab}\in \mathbb{N}_{+}^{N}$, we denote with $\Delta W^{\dete}_{\alphab,\bar{\betab}}$ and $\Delta E^{\dete}_{\alphab,\bar{\betab}}$ the work and error contributions associated to the deterministic hierarchical surplus \eqref{eq:deterministic_surplus}. 
Similarly, for a fixed $\bar{\alphab}\in \mathbb{N}_+^D$, we denote with 
$\Delta W^{\stoc}_{\bar{\alphab},\betab}$ and $\Delta E^{\stoc}_{\bar{\alphab},\betab}$ the work and error contributions associated to the deterministic hierarchical surplus \eqref{eq:stochastic_surplus}.

\begin{assumption}[Assumption on the work contributions]\label{ass:work}
The work contributions associated to the deterministic, stochastic and mixed hierarchical surpluses satisfy\footnote{The work bound involves $\prod_{n=1}^N (\beta_n+1)$ instead of the more natural $\prod_{n=1}^N \beta_n$ to simplify the proofs of Theorems \ref{thm:stoc} and \ref{thm:stoc_det}. Nevertheless, this does not influence the asymptotic analysis since $\prod_{n=1}^N \beta_n\leq \prod_{n=1}^N (\beta_n+1)\leq 2^N\prod_{n=1}^N \beta_n$}
\begin{align}
\Delta W^{\dete}_{\alphab,\bar{\betab}} &\leq C^{\dete,\bar{\betab}}_{\work} \prod_{n=1}^D 2^{\alpha_n\widetilde{\gamma}_n},\label{eq:ass:1}\\
\Delta W^{\stoc}_{\bar{\alphab},\betab} &\leq C^{\stoc,\bar{\alphab}}_{\work} \prod_{n=1}^N (\beta_n+1),\label{eq:ass:3}\\
\Deltab W_{\alphab,\betab}&\leq C_{\work}\left(\prod_{n=1}^D 2^{\alpha_n\widetilde{\gamma}_n}\right)  \left(\prod_{n=1}^N (\beta_n+1)\right),\label{ass:produc_structure_W}
\end{align}
for some rates $\widetilde{\gamma}_n$. 
\end{assumption}
\begin{assumption}[Assumption on the error contributions]\label{ass:error}
The error contributions associated to the deterministic, stochastic and mixed hierarchical surpluses satisfy
\begin{align}
\Delta E^{\dete}_{\alphab,\bar{\betab}} &\leq C^{\dete,\bar{\betab}}_{\error} \prod_{n=1}^D 2^{-\alpha_n \widetilde{r}_n},\label{eq:ass:2}\\
\Delta E^{\stoc}_{\bar{\alphab},\betab} &\leq C^{\stoc,\bar{\alphab}}_{\error} \prod_{n=1}^N e^{-\widetilde{g}_n(\beta_n+1)},\label{eq:ass:4}\\
\Deltab E_{\alphab,\betab}&\leq C_{\error}  \left(\prod_{n=1}^D 2^{-\alpha_n\widetilde{r}_n}\right) \left(\prod_{n=1}^N e^{-\widetilde{g}_n(\beta_n+1)}\right),\label{ass:produc_structure_E}
\end{align}
for some rates $\widetilde{r}_i$ and $\widetilde{g}_i$. 
\end{assumption}
Assumptions \ref{ass:work} and \ref{ass:error} specify how the work and error contributions of the deterministic and stochastic hierarchical surpluses depend on the multi-indices $\alphab$ and $\betab$.
They assume a multiplicative structure of the work and error contributions of the mixed hierarchical surplus. Notice that Assumptions \ref{ass:error} guarantee that $\left\{\Delta E_{\alphab,\betab}\right\}_{(\alphab,\betab)\in \mathbb{N}_+^{N+D}}\in l^1(\mathbb{N}_+^{N+D})$, so that, together with the convergence of the minimizers of the discrete OCPs to $u$, validates the hierarchical representation \eqref{eq:CT_estimator}. The discussion of the rational behind these assumptions is postponed to Section \ref{sec:validation_of_hypothesis}, where numerical evidence of their validity together with a procedure to estimate the rates $\widetilde{r}_i$ and $\widetilde{g}_i$ are presented.

\subsection{Complexity analysis}\label{sec:complexity}
In this subsection, we present a complexity analysis of the CT in two different settings. We first consider the simpler case in which the spatial discretization is fixed, i.e. the multi-index $\alphab$ is an assigned, constant, multi-index  $\alphab=\bar{\alphab}$. In other words, the CT combines the solutions of \eqref{eq:OCP_semidiscrete} for different quadrature formulae, but all solutions are computed on the same spatial mesh. Hence, the CT involves exclusively the stochastic hierarchical surpluses,
\begin{equation}\label{eq:CT_onlystoch}
\mathcal{M}_{\I}(u)=\sum_{\betab\in \I} \Delta^\stoc[ u^{\bar{\alphab},\betab}].
\end{equation}

In the second setting, we consider the more general case where both $\alphab$ and $\betab$ are variables in the CT expansion, as in \eqref{eq:CT_estimator}. In both cases, the complexity analysis is based on a ``direct counting" argument that consists in summing the error contributions outside the index set $\I$, and the work contributions inside $\I$, see \cite{haji2016multi,nobile2016convergence}.

Let us start with the first setting. 
As $\bar{\alphab}$ is fixed, to each multi-index $\betab$ we associate the profit
\[\text{P}_{\betab}=\frac{\Delta E_{\bar{\alphab},\betab}^{\stoc}}{\Delta W_{\bar{\alphab},\betab}^\stoc},\]
and given a tolerance $\varepsilon>0$, the index set $\I$ simplifies to
\[ \I(\varepsilon):=\left\{\betab \in \mathbb{N}_{+}^{N}: \frac{\Delta E_{\bar{\alphab}\betab}^\stoc}{\Delta W_{\bar{\alphab},\betab}^\stoc}\geq \varepsilon\right\}.\]
Due to Assumptions \ref{ass:work} and \ref{ass:error}, $\I(\varepsilon)$ coincides with
\[ \I(L):=\left\{\betab \in \mathbb{N}_{+}^{N}: \widetilde{\gb}\cdot (\betab+\unob) + |\log(\betab+\unob)|_1\leq L\right\},\]
where $|\log(\betab+\unob)|_1=\sum_{i=1}^N \log(\betab_i+1)$, and $L:=L(\varepsilon)=-\log(\varepsilon \frac{C_{\work,\bar{\alphab}}^\stoc}{C_{\error,\bar{\alphab}}^\stoc}).$

The next theorem provides a complexity result for the CT applied only on the stochastic discretization multi-index $\betab$. The proof is detailed in Appendix \ref{appendix:thm:stoc}
\begin{theorem}\label{thm:stoc}
There exist constants $C_1$ and $C_2$ such that for any $W_{\max}$ satisfying $W_{\max}\geq \frac{|\widetilde{\gb}|^{2N}C_1}{(2N)!}$, and choosing $\widehat{L}=\sqrt[2N]{\frac{W_{\max}(2N)!}{C_1}}-|\widetilde{\gb}|$,
\begin{align*}
\text{Work}\big[\mathcal{M}_{\I(\widehat{L})}(u)\big]&\leq W_{\max},\\
\text{Error}\big[\mathcal{M}_{\I(\widehat{L})}(u)\big]&\leq C_2 e^{-\sqrt[2N]{\frac{W_{\max}(2N)!}{C_1}}}\left(\sqrt[2N]{\frac{W_{\max}(2N)!}{C_1}}+1\right)^{2N-1}.
\end{align*}
\end{theorem}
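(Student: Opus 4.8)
The plan is to prove the two bounds by a direct counting argument, estimating the total work by summing the work contributions over the admissible set $\I(\widehat{L})$, and estimating the error by summing the error contributions over the complement $\mathbb{N}_+^N\setminus \I(\widehat{L})$. The whole argument hinges on understanding the geometry of the set $\I(L)=\{\betab\in\mathbb{N}_+^N:\ \widetilde{\gb}\cdot(\betab+\unob)+|\log(\betab+\unob)|_1\leq L\}$, in particular how its cardinality and the relevant sums over it grow with $L$.

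First I would bound the work. Using \eqref{eq:ass:3} we have $\Delta W^{\stoc}_{\bar\alphab,\betab}\leq C^{\stoc,\bar\alphab}_{\work}\prod_{n=1}^N(\beta_n+1)$, so $\text{Work}[\mathcal{M}_{\I(\widehat L)}(u)]\leq C^{\stoc,\bar\alphab}_{\work}\sum_{\betab\in\I(\widehat L)}\prod_{n=1}^N(\beta_n+1)$. The key observation is that for $\betab\in\I(L)$ one has $\prod_n(\beta_n+1)=e^{|\log(\betab+\unob)|_1}\leq e^{L-\widetilde\gb\cdot(\betab+\unob)}\leq e^L$ (since $\widetilde\gb\cdot(\betab+\unob)\geq|\widetilde\gb|_1\ge 0$), and more precisely $\prod_n(\beta_n+1)\leq e^{L}e^{-\widetilde\gb\cdot(\betab+\unob)}$. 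Summing $e^{-\widetilde\gb\cdot(\betab+\unob)}$ over $\betab\in\mathbb{N}_+^N$ gives a convergent geometric factor; alternatively one controls $\sum_{\betab\in\I(L)}\prod_n(\beta_n+1)$ by relating it to $\sum_{\betab\in\I(L)}1$ times $e^L$ and then bounding $|\I(L)|$. A clean way is to change variables $t_n=\log(\beta_n+1)\geq\log 2$ so that the constraint becomes $\sum_n(\widetilde g_n+1)t_n\le L$ (after absorbing $\widetilde\gb\cdot\unob$ into $L$, using $\beta_n+1\le e^{t_n}$), a simplex in the $t$ variables; the number of lattice points and the sum of $e^{\sum t_n}$ over that simplex both scale like $L^{N-1}e^{cL}$ up to constants, and after collecting all constants one identifies a constant $C_1$ (also depending on $N$ and on $|\widetilde\gb|$) such that $\text{Work}\leq \frac{C_1}{(2N)!}(\widehat L+|\widetilde\gb|)^{2N}$. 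The specific choice $\widehat L=\sqrt[2N]{W_{\max}(2N)!/C_1}-|\widetilde\gb|$ then makes the right-hand side exactly $W_{\max}$, and the hypothesis $W_{\max}\ge|\widetilde\gb|^{2N}C_1/(2N)!$ guarantees $\widehat L\ge 0$ so the set is nonempty and the bound is meaningful.

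Next I would bound the error. By \eqref{eq:error_bound} and \eqref{eq:ass:4}, $\text{Error}[\mathcal{M}_{\I(\widehat L)}(u)]\leq C^{\stoc,\bar\alphab}_{\error}\sum_{\betab\notin\I(\widehat L)}\prod_{n=1}^N e^{-\widetilde g_n(\beta_n+1)}$. For $\betab\notin\I(L)$ we have $\widetilde\gb\cdot(\betab+\unob)>L-|\log(\betab+\unob)|_1$, hence $\prod_n e^{-\widetilde g_n(\beta_n+1)}=e^{-\widetilde\gb\cdot(\betab+\unob)}< e^{-L}\prod_n(\beta_n+1)$. So it suffices to bound $e^{-L}\sum_{\betab\notin\I(L)}\prod_n(\beta_n+1)$; but the full sum $\sum_{\betab\in\mathbb{N}_+^N}\prod_n(\beta_n+1)e^{-\widetilde\gb\cdot(\betab+\unob)}$ converges, and one wants the tail restricted to $\betab\notin\I(L)$, which is exponentially small in $L$. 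A direct estimate gives a bound of the form $C e^{-L}(L+1)^{N-1}$ from integrating over the region outside the simplex; writing $L$ in terms of $W_{\max}$ via $\widehat L+|\widetilde\gb|=\sqrt[2N]{W_{\max}(2N)!/C_1}$ and noting $\widehat L\le\sqrt[2N]{W_{\max}(2N)!/C_1}$ yields exactly $\text{Error}\le C_2\, e^{-\sqrt[2N]{W_{\max}(2N)!/C_1}}\big(\sqrt[2N]{W_{\max}(2N)!/C_1}+1\big)^{2N-1}$ after replacing the exponent $N-1$ by the weaker $2N-1$ and absorbing the factor $e^{|\widetilde\gb|}$ and the quadrature constants into $C_2$.

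The main obstacle I anticipate is the careful bookkeeping of the lattice-point sums over the simplex-like region $\I(L)$: one must show that both $\sum_{\betab\in\I(L)}\prod_n(\beta_n+1)$ and the tail sum $\sum_{\betab\notin\I(L)}\prod_n(\beta_n+1)e^{-\widetilde\gb\cdot(\betab+\unob)}$ behave like (a constant times) $L^{N-1}$ and $L^{N-1}e^{-L}$ respectively, uniformly in $L$, and to do so in a way that produces the exact form of the constants and the exponent $2N$ appearing in the statement. This is where the slightly unnatural choice of $\prod_n(\beta_n+1)$ in Assumption \ref{ass:work} (flagged in the footnote) is exploited: it lets the logarithmic change of variables turn the constraint precisely into a simplex, so the relevant sums reduce to integrals of the form $\int_{\sum s_n\le L}1\,ds$ and $\int_{\sum s_n\ge L}e^{-s}\,ds$, whose values are explicit. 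Bounding discrete sums by such integrals (with a correction for the $\log 2$ lower bound on each variable and for the shift $\widetilde\gb\cdot\unob$) and then collecting everything into $C_1,C_2$ is routine but must be done with care to land on the stated inequalities.
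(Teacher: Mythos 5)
Your overall strategy (direct counting, sum-to-integral comparison, a change of variables that linearizes the constraint defining $\I(L)$) is the same as the paper's, but both of your key quantitative steps have genuine gaps, and these steps are the entire content of the theorem. For the work bound, every estimate you actually set up is exponential in $L$: bounding $\prod_n(\beta_n+1)\leq e^{L-\widetilde{\gb}\cdot(\betab+\unob)}$ and summing the geometric factor gives $\text{Work}\lesssim e^{L}$; so does $e^{L}\cdot|\I(L)|$; and your claimed simplex scaling $L^{N-1}e^{cL}$ is again exponential. None of these yields the polynomial bound $\tfrac{C_1}{(2N)!}(L+|\widetilde{\gb}|)^{2N}$ that you then assert --- and without a polynomial-in-$L$ work bound the inversion $L\sim W_{\max}^{1/(2N)}$ and hence the error rate $e^{-cW_{\max}^{1/(2N)}}$ collapse. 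Moreover, your substitution $t_n=\log(\beta_n+1)$ does \emph{not} turn the constraint into a simplex: $\widetilde{g}_n(\beta_n+1)=\widetilde{g}_ne^{t_n}$, not $\widetilde{g}_nt_n$. The correct route (the paper's) is to keep $\prod_n(\beta_n+1)$ as a degree-$N$ polynomial, not to trade it for $e^{L-\widetilde{\gb}\cdot(\betab+\unob)}$, and to use the substitution $t_j=\widetilde{g}_j\beta_j+\log\beta_j$, which makes the constraint exactly $|\mathbf{t}|\leq L$ while the integrand becomes a product $\prod_j(t_j+\widetilde{g}_j)/\widetilde{g}_j^2$; integrating a product of the $N$ coordinates over an $N$-dimensional simplex of size $H$ gives $H^{2N}/(2N)!$ (Lemma \ref{lemma:2}), which is where the exponent $2N$ comes from.

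The error bound has the symmetric problem. Your first reduction, to $e^{-L}\sum_{\betab\notin\I(L)}\prod_n(\beta_n+1)$, is vacuous because that sum diverges. Your fallback ``direct estimate'' $Ce^{-L}(L+1)^{N-1}$ is not correct either: on the complement of $\I(L)$ the linear part $\widetilde{\gb}\cdot(\betab+\unob)$ can be as small as $L-|\log(\betab+\unob)|$, so the summand can be as large as $e^{-L}\cdot\mathrm{poly}_N(L)$ on a boundary layer of cardinality $\sim L^{N-1}$, and the true order is $e^{-L}L^{2N-1}$. You recover the exponent $2N-1$ only by declaring it a ``weakening'' of a bound you have not (and cannot have) established. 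The working argument writes $e^{-\widetilde{\gb}\cdot(\betab+\unob)}=e^{-s(\betab)}\prod_n(\beta_n+1)$ with $s(\betab):=\widetilde{\gb}\cdot(\betab+\unob)+|\log(\betab+\unob)|>L$, changes variables to $t_i=s_i$, and bounds $\int_{|\mathbf{t}|>L}e^{-|\mathbf{t}|+|\log(\mathbf{t})|}\,d\mathbf{t}\leq e^{-L}(L+1)^{2N-1}$ (Lemma \ref{lemma:3}); the degree-$N$ polynomial factor in the integrand is precisely what raises the power from $N-1$ to $2N-1$. In short: right skeleton, but the two estimates that produce the exponents $2N$ and $2N-1$ are missing, and the intermediate bounds you propose in their place are either too weak by an exponential factor or false.
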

Numerically, we observed that the simplified estimate, based on Stirling approximation,
\begin{equation}\label{eq:decay_simplified}
\text{Error}\big[\mathcal{M}_{\I(\widehat{L})}(u)\big]\leq C e^{-\gamma N\sqrt[2N]{W_{\max}}},
\end{equation} for $C,\gamma\in \mathbb{R}^+$ captures well the convergence (see Fig \ref{Fig:set}).
Notice that the complexity result expressed in \eqref{eq:decay_simplified} is the same found in \cite{nobile2016convergence} for the complexity of sparse grid interpolants (on non-nested grids) for elliptic random PDEs. 

We next consider the general case in which the combination technique is applied both to the spatial and stochastic discretization parameters. Under Assumptions \ref{ass:work}, \ref{ass:error}, and defining  $\rb=\log(2)\widetilde{\rb}$, $\gammab=\log(2)\widetilde{\gammab}$, for a given tolerance $\varepsilon$ the quasi-optimal set \eqref{eq:multiindex_set} becomes
\[ \I(L):=\left\{(\alphab,\betab)\in \mathbb{N}_{+}^{D+N}:\ (\rb+\gammab)\cdot \alphab+\widetilde{\gb}\cdot (\betab+\unob) +|\log(\betab+\unob)|_1\leq L\right\},\]
with $L=-\log\left(\varepsilon\frac{C_{\work}}{C_{\error}}\right)$.
The next Theorem provides an asymptotic convergence result for the CT applied to the OCP \eqref{eq:OCP_definition}.
The proof is detailed in Appendix \ref{appendix:thm:stoc_deter}.
\begin{theorem}\label{thm:stoc_det}
Let $r_j=\log(2)\widetilde{r}_j$, $\gamma_j=\log(2)\widetilde{\gamma}_j$, $j=1,\dots,N$, $\mathbf{\Theta}=(\frac{\gamma_1}{r_1+\gamma_1},\dots,\frac{\gamma_D}{r_D+\gamma_D})$, $\mu=\min_n\frac{r_n}{\gamma_n}$, $\chi=\max_n \mathbf{\Theta}_n$, and $n(\mathbf{\Theta},\chi):=\#\left\{n: \mathbf{\Theta}_n=\chi\right\}$.
There exists a constant $C_W$ such that for any $W_{\max}$ satisfying $W_{\max}\geq C_W e^{\chi}$, and setting
\begin{equation}
L=L(W_{\max})=\frac{1}{\chi}\left(\log\left(\frac{W_{\max}}{C_W}\right)-(n(\mathbf{\Theta},\chi)-1)\log\left(\frac{1}{\chi}\log\left(\frac{W_{\max}}{C_W}\right)\right)\right), 
\end{equation} 
the combination technique solution satisfies
\begin{align}
\text{Work}[\mathcal{M}_{\I(L(W_{\max}))}]\leq W_{\max},\\
\limsup_{W_{\max}\rightarrow \infty} \frac{Error[\mathcal{M}_{\I(L(W_{\max}))}]}{W_{\max}^{-\mu}(\log(W_{\max})^{(\mu+1)(n(\mathbf{\Theta},\chi)-1})}=C<\infty,
\end{align} 
\end{theorem}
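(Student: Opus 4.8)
The plan is to follow the ``direct counting'' strategy used already in the proof of Theorem~\ref{thm:stoc}, but now with the mixed spatial/stochastic profit. First I would split the budget. Using Assumption~\ref{ass:work} and the product structure \eqref{ass:produc_structure_W}, the total work of the CT is bounded, up to the constant $C_\work$, by a sum over $\I(L)$ of $\prod_n 2^{\alpha_n\widetilde\gamma_n}\prod_n(\beta_n+1)$; substituting $\gamma_j=\log 2\,\widetilde\gamma_j$ and using that the constraint defining $\I(L)$ contains the term $(\rb+\gammab)\cdot\alphab+\widetilde\gb\cdot(\betab+\unob)+|\log(\betab+\unob)|_1\le L$, each summand is bounded by something like $e^{L}$ times a correction, so the work is at most $C_W e^{\chi L}$ times a polynomial-in-$L$ factor coming from the cardinality of the lattice points satisfying the constraint. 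Inverting this relation — this is exactly where the peculiar definition of $L(W_{\max})$ with the $(n(\mathbf\Theta,\chi)-1)\log(\tfrac1\chi\log(\cdot))$ correction comes from — gives $\text{Work}[\mathcal M_{\I(L(W_{\max}))}]\le W_{\max}$. I would treat the inequality $W_{\max}\ge C_W e^{\chi}$ as the threshold needed so that $L(W_{\max})\ge 1$ (or whatever lower bound keeps $\I(L)$ nonempty and the lattice-counting estimates valid).

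Second, for the error I would bound $\text{Error}[\mathcal M_{\I(L)}]\le\sum_{(\alphab,\betab)\notin\I(L)}\Deltab E_{\alphab,\betab}$ via \eqref{eq:error_bound}, then insert \eqref{ass:produc_structure_E}: the summand is $C_\error\prod_n 2^{-\alpha_n\widetilde r_n}\prod_n e^{-\widetilde g_n(\beta_n+1)}$, i.e. $C_\error\,e^{-(\rb\cdot\alphab+\widetilde\gb\cdot(\betab+\unob))}$. The key algebraic identity is that on the complement of $\I(L)$ one has $(\rb+\gammab)\cdot\alphab+\widetilde\gb\cdot(\betab+\unob)+|\log(\betab+\unob)|_1>L$; writing $\rb\cdot\alphab+\widetilde\gb\cdot(\betab+\unob)$ as a convex-type combination and using $\mu=\min_n r_n/\gamma_n$ and $\chi=\max_n\Theta_n$, one extracts that the exponent in the error summand is at least $\mu$ times the exponent controlling the work, so that each term outside $\I(L)$ of work $w$ contributes error $\lesssim w^{-\mu}$ evaluated at the boundary $w\approx e^{\chi L}$. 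Summing the geometric-type tail over the lattice produces the polynomial prefactor $(\log W_{\max})^{(\mu+1)(n(\mathbf\Theta,\chi)-1)}$, where the multiplicity $n(\mathbf\Theta,\chi)$ counts how many directions achieve the worst ratio and hence how many ``free'' summation indices survive after the decay is used up.

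The main obstacle, and the step I would spend the most care on, is the precise lattice-point counting that couples the two constraints. Unlike the purely stochastic case of Theorem~\ref{thm:stoc}, here the admissible set $\I(L)$ is cut out by a constraint mixing a linear part in $\alphab$ (from both $\rb$ and $\gammab$), a linear part in $\betab$ (from $\widetilde\gb$), and a logarithmic part in $\betab$ (from the linear level-to-nodes map $m(\beta_n)=\beta_n$). One must carefully identify the dominant ``corner'' of this polytope — it lies in the $\alphab$ directions indexed by $\arg\max_n\Theta_n$ — and show that the number of lattice points near the relevant level set, as well as the tail sums of $e^{-\mu\cdot(\text{level})}$ over its complement, behave like $L^{n(\mathbf\Theta,\chi)-1}$ up to constants. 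This is a standard but delicate ``counting integer points under a linear constraint with a logarithmic perturbation'' estimate; I would reduce it to the $\betab$-free case by first summing out the $\betab$ variables (the logarithmic term makes each such sum contribute an $O(1)$ factor rather than changing the exponent, which is why the footnote's replacement of $\prod\beta_n$ by $\prod(\beta_n+1)$ is harmless) and then handle the remaining linear-in-$\alphab$ problem exactly as in the MISC analysis of \cite{haji2016multi}. Matching the resulting asymptotics with the chosen $L(W_{\max})$ then yields the stated $\limsup$.

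Finally, I would verify the two displayed claims in order: the work bound is immediate from the definition of $L(W_{\max})$ once the work-counting estimate $\text{Work}\le C_W e^{\chi L}L^{n(\mathbf\Theta,\chi)-1}$ is in hand (the $\log\log$ correction in $L$ is tuned precisely to absorb the $L^{n(\mathbf\Theta,\chi)-1}$ factor back into $W_{\max}$); the error bound then follows by plugging the same $L(W_{\max})$ into the error-counting estimate $\text{Error}\le C\,e^{-\mu\chi L}L^{(\mu+1)(n(\mathbf\Theta,\chi)-1)}$ and simplifying $e^{-\mu\chi L}\sim (W_{\max})^{-\mu}$ up to lower-order logarithmic factors, which is exactly the asserted rate. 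The $\limsup$ (rather than a clean inequality) appears because the lattice-counting constants oscillate mildly with $W_{\max}$, so only the asymptotic ratio is controlled.
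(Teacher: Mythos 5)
Your plan follows essentially the same route as the paper's proof: a direct counting argument, conversion of sums to integrals via the first technical lemma, summing out the $\betab$ variables first (so the logarithmic perturbation only contributes $O(1)$ factors), and a corner analysis in the $\alphab$ directions via the exponential-integral lemmas, yielding $\text{Work}\le C_W e^{\chi L}L^{n(\mathbf{\Theta},\chi)-1}$ with $L(W_{\max})$ tuned to absorb the polynomial factor, and an error bound obtained by splitting the complement of $\I(L)$ according to whether $(\rb+\gammab)\cdot\alphab$ exceeds $L$. The one bookkeeping slip is in your intermediate error estimate: in terms of $L$ it reads $C e^{(\chi-1)L}L^{n(\mathbf{\Theta},\chi)-1}$ (not $L^{(\mu+1)(n(\mathbf{\Theta},\chi)-1)}$), and the extra $\mu\,(n(\mathbf{\Theta},\chi)-1)$ logarithmic powers in the final rate arise only after substituting $L(W_{\max})$ through the identity $e^{(\chi-1)L}=(e^{\chi L})^{-\mu}$; carried literally, your stated intermediate bound would overshoot the asserted rate by a factor $(\log W_{\max})^{\mu(n(\mathbf{\Theta},\chi)-1)}$.
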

Notice that Theorem \ref{thm:stoc_det} provides a characterization of the asymptotic complexity. As we discuss in Section \ref{sec:numerical}, the pre-asymptotic behaviour can be slightly different. 
Second, the asymptotic complexity depends exclusively on the rates associated to the \textit{spatial} work and error contribution. The asymptotic complexity is not affected by the rates of the stochastic variables. This can be understood noticing that, while both the spatial and stochastic errors decay exponentially with respect to $\alpha_i$ and $\beta_i$, the spatial work grows exponentially with respect to $\alpha_i$, whereas the stochastic work grows only linearly with respect to $\beta_i$.
Third, the asymptotic complexity of the CT provided by Theorem \ref{thm:stoc_det} is identical to the MISC complexity result presented in \cite{haji2016multi}, thus the CT applied to OCPs have the same asymptotic convergence of the MISC method for the solution of PDEs in the context of forward UQ.

\begin{remark}[Infinite dimensional setting]
The complexity analysis presented here cannot be readily generalized to the infinite dimensional setting ($N=\infty$), since both theorems involve constants that explode as $N\rightarrow \infty$. To handle the infinite dimensional case, a refined analysis is required based on the summability of the sequence $\left\{\widetilde{g}_n\right\}_{n=1}^\infty$ in appropriate $l_p$ spaces, following, e.g., \cite{haji2016multi2}. Nevertheless, from the implementation point of view, the CT can be extended to the infinite dimensional setting by modifying Alg \ref{alg:2} in the loop 10-17, so that only a subset of the reduced margin involving a finite number of dimensions is explored at each iteration. This improved algorithm would automatically balance the quadrature error, the spatial discretization error, and the truncation error that is commonly committed by considering only a finite number of random parameters (as in truncated Karhunen–Loève expansions of random fields). 
For more details, we refer to \cite{doi:10.1137/19M1294952} for an analysis concerning truncation error and to \cite{ernst2018convergence,chkifa2014high,seidler2022dimension} for algorithmic aspects.
\end{remark}

\section{Numerical section}\label{sec:numerical}
In this section, we test the effectiveness of the CT on a model problem. 
In all experiments, we consider the OCP \eqref{eq:OCP_example} with force term $\phi=1$ and with the diffusion coefficient 
\begin{equation}\label{eq:diffusion_coefficient}\df(x,\zetab)=e^{\sum_{i=1}^N \zeta_n\lambda_n \psi_n(\mathbf{x})},
\end{equation}
where $\zeta_n\sim \mathcal{U}(-1,1)$ and $\lambda_n=\sqrt{3}e^{-0.6n}$.
The spatial domain is $\mathcal{D}=(0,1)^d$, with $d=1,2$. In the one-dimensional case $\psi_n(x)=\phi_{i_1(n)}(x)$, while in the two-dimensional case $\psi_n(\mathbf{x})=\phi_{i_1(n)}(x_1)\phi_{i_2(n)}(x_2)$, where 
\begin{equation}
\phi_n(x)=\begin{cases} \sin(\frac{n}{2}\pi x) & \text{if } n \text{ is even},\\
\cos(\frac{n-1}{2}\pi x) & \text{if } n \text{ is odd}.
\end{cases}
\end{equation}
The maps $i_j:\mathbb{N}^+\rightarrow \mathbb{N}^+$, $j=1,2$, are detailed in Table 
\ref{tab:i}.
\begin{table}\label{tab:i}
\centering
\begin{tabular}{c c c c c c c c c c c c }
\hline
n & 1 & 2 & 3 & 4 & 5 & 6 & 7 & 8 & 9 & 10 & $\ldots$\\
\hline
$i_1(n)$ & 1 & 2 & 1 & 3 & 2 & 1 & 4 & 3 & 2 & 1 & $\ldots$\\
$i_2(n)$ & 1 & 1 & 2 & 1 & 2 & 3 & 1 & 2 & 3 & 4 & $\ldots$\\
\hline
\end{tabular}
\caption{Definition of the maps $i_1$ and $i_2$.}
\end{table}
Notice that \eqref{eq:diffusion_coefficient} leads to a well-posed state equation, as \[k_{\min}:=e^{-\sum_{n=1}^N \lambda_n}\leq \df(x,\zetab)\leq e^{\sum_{n=1}^N \lambda_n}=:k_{\max},\]
so that Assumptions \ref{ass:well-posedness} and \ref{ass:well-posedness_OCP} are satisfied, even in the limit $N\rightarrow \infty$.
As a quantity of interest, we consider $F(y(\zetab))=\frac{1}{2}\|y(\zetab)-y_d\|^2_{L^2(D)}$ with $y_d:=\prod_{i=1}^d \sin(\pi x_i)$. Note that the precise expression of $y_d$ does not influence the stochastic regularity of the problem since it is deterministic. However, a less regular target state may influence the spatial regularity.

\subsection{Discussion on the validity of the assumptions}\label{sec:validation_of_hypothesis}
In this subsection, we discuss the rational behind the assumptions made in Section \ref{sec:apriori}, and verify them on the model problem described in Example \ref{example:1}.

To satisfy Assumption \ref{ass:mesh_dependence_linear_map}, we consider a spatial mesh obtained as the tensor product of one-dimensional meshes, each characterized by a uniform mesh size $h_{i,\alpha_i}=2^{-\alpha_i-1}$, hence $h_0=2^{-1}$. In one dimension, we employ the Lagrange $\mathbb{P}^1$ finite element space, while in two dimensions we use the $\mathcal{Q}^1$ bilinear finite element space, see \cite{ern2004theory}.
The total number of degrees of freedom on the mesh associated to $\alphab$ is $N_{\alphab}=\prod_{n=1}^D (2^{\alpha_n+1}-1)\leq C_{D} \prod_{n=1}^D 2^{\alpha_n}$.
Further, we consider a linear level-to-nodes map to define the tensor product quadrature rule. The total number of Gauss-Legendre quadrature points is $M_{\betab}=\prod_{n=1}^N \beta_n$.
 
Next, we analyse Assumption \ref{ass:work} on the work contributions.
To compute the optimal control $u^{\alphab,\betab}$, we solve the full-space discrete optimality system (see \cite{vanzan,Matthieu} for a derivation),
\begin{equation}\label{eq:full_space}
\begin{pmatrix}
\mathcal{M} & 0 &\mathcal{A}\\
0 & \nu M_s & -M_s E^\top\\
\mathcal{A} & -E M_s & 0
\end{pmatrix}\begin{pmatrix}
\mathbf{y}\\\mathbf{u}\\ \mathbf{p}
\end{pmatrix}=\begin{pmatrix}
\mathbf{f}_p\\ 0\\\mathbf{f}_y
\end{pmatrix},
\end{equation}
where $\mathcal{A}=\text{diag}\left(w_1A(\bar{\zetab}_1),\dots,w_{M_{\betab}} A(\bar{\zetab}_{M_{\betab}})\right)$ and $A(\bar{\zetab}_n)\in\mathbb{R}^{N_{\alphab}\times N_{\alphab}}$ is the stiffness matrix associated to the quadrature node $\bar{\zetab}_n$ and $w_n$ is its quadrature weight, $M_s\in \mathbb{R}^{N_{\alphab}\times N_{\alphab}}$ is the mass matrix, $\mathcal{M}=\text{diag}\left(w_1M_s,\dots,w_{M_{\betab}} M_s\right)$, and $E=\left(w_1 I_s, \dots, w_{M_{\betab}} I_s\right)^\top$ where $I_s\in \mathbb{R}^{N_{\alphab}\times N_{\alphab}}$ is the identity matrix. Note that the dimensions of $\mathcal{A}$, $\mathcal{M}$ and $E$ depend on the multi-index $(\alphab,\betab)$. We solve this large (symmetric) saddle-point system using MINRES preconditioned by the block diagonal preconditioner analysed in \cite{vanzan}. For a fixed value of the regularization parameter $\nu$, this preconditioner leads to a robust convergence with respect to the mesh size and the number of collocation points, so that the number of Krylov iterations can be considered constant. The major cost of each iteration is the preconditioning of the $2M_{\betab}$ stiffness matrices. Alternatively to \eqref{eq:full_space}, we may perform a Schur complement on $u$ and solve the reduced optimality system
\begin{equation}\label{eq:reduced_space} 
\left(\nu M_s +\sum_{n=1}^{M_{\betab}} w_n(M_s A^{-1}(\bar{\zetab}_n)M_sA^{-1}(\bar{\zetab}_n)M_s)\right)\mathbf{u}=\mathbf{g}.
\end{equation}
Each Conjugate Gradient iteration then requires to invert $2M_{\betab}$ stiffness matrices.
In spite of the full-space/reduced approach used, the overall cost for solving the OCP depends then linearly on the number of collocation points $M_{\betab}$, and possibly nonlinearly (depending on the preconditioner used) on the size of the finite element space, that is,
\begin{equation}\label{eq:bound_on_work_single_OCP}
\text{Work}[u^{\alphab,\betab}]\leq \widehat{C} \left( \prod_{n=1}^N \beta_n\right)  \left(\prod_{n=1}^D 2^{\alpha_n}\right)^{\theta},  
\end{equation}
for a real parameter $\theta$ and constant $\widehat{C}$.
The deterministic and stochastic work contributions thus satisfy
\begin{equation*}
\medmuskip=-0.5mu
\thinmuskip=-0.5mu
\thickmuskip=-0.5mu
\nulldelimiterspace=1pt
\scriptspace=1pt 
\arraycolsep1em 
\begin{aligned}
\Delta W_{\alphab,\bar{\betab}}^{\det}=\sum_{\jb\in \left\{0,1\right\}^D} \text{Work}[u^{\alphab-\jb,\bar{\betab}}] &\leq \widehat{C} \sum_{\jb\in \left\{0,1\right\}^D} \left( \prod_{n=1}^N \bar{\beta}_n\right) \left(\prod_{n=1}^D 2^{\alpha_n-j_n}\right)^{\theta} \leq C^{\det,\bar{\betab}}_{\work} \left(\prod_{n=1}^D 2^{\alpha_n}\right)^{\theta},
\end{aligned}
\end{equation*}
with $C^{\det,\bar{\betab}}_{\work}=\widehat{C}(1+2^{-\theta})^D\left( \prod_{n=1}^N \bar{\beta}_n\right)$ for a fixed $\bar{\betab}$, and
\begin{equation*}
\medmuskip=-1mu
\thinmuskip=-1mu
\thickmuskip=-1mu
\nulldelimiterspace=0.9pt
\scriptspace=0.9pt 
\arraycolsep0.9em 
\begin{aligned}
\Delta W_{\bar{\alphab},\betab}^{\stoc}=\sum_{\jb\in \left\{0,1\right\}^N} \text{Work}[u^{\bar{\alphab},\betab-\jb}] &\leq \widehat{C} \sum_{\jb\in \left\{0,1\right\}^N} \left( \prod_{n=1}^N (\beta_n-j_n)\right)\left(\prod_{n=1}^D 2^{\bar{\alpha}_n}\right)^{\theta}\leq C^{\stoc,\bar{\alphab}}_{\work} \left(\prod_{n=1}^N (\beta_n+1)\right),
\end{aligned}
\end{equation*}
where $C^{\stoc,\bar{\alphab}}_{\work}=\widehat{C}2^N\left(\prod_{n=1}^d 2^{\bar{\alpha}_n}\right)^{\theta}$ for a given $\bar{\alphab}$.
We are left to check \eqref{ass:produc_structure_W}, but indeed 
\begin{equation*}
\medmuskip=-1mu
\thinmuskip=-1mu
\thickmuskip=-1mu
\nulldelimiterspace=0.9pt
\scriptspace=0.9pt 
\arraycolsep0.9em 
\begin{aligned}
\Delta W_{\alphab,\betab}=\sum_{\ib\in \left\{0,1\right\}^{D+N}} \text{Work}\big[ u^{(\alphab,\betab)-\ib}\big]& \leq 
\sum_{\kb\in \left\{0,1\right\}^D}\sum_{\jb\in  \left\{0,1\right\}^N} \widehat{C} \left(\prod_{n=1}^D 2^{\alpha_n-j_n}\right)^{\theta}\left( \prod_{n=1}^N (\beta_n-j_n)\right)\\
&\leq  \widehat{C} 2^{N} \left( \prod_{n=1}^N (\beta_n+1)\right) \sum_{\kb\in \left\{0,1\right\}^D} \left(\prod_{n=1}^D 2^{\alpha_n-j_n}\right)^{\theta}\\
&\leq  C_{\work} \left( \prod_{n=1}^N (\beta_n+1)\right) \left(\prod_{n=1}^D 2^{\alpha_n}\right)^{\theta},
\end{aligned}
\end{equation*}
with $C_{\work}=2^{N}(1+2^\theta)^D\widehat{C}$. 

Notice that \eqref{eq:ass:1} and \eqref{ass:produc_structure_W} are trivially satisfied setting $\widetilde{\gamma}_i=\theta$, for every $i$.
In our numerical experiments, we used the full-space approach and invert directly the $2M_{\betab}$ matrices using the built-in Matlab sparse banded matrices solver, and found that $\theta=1$ gives a good description of the increase of the computational time with respect to the size of the finite element space.

Next, we focus on Assumption \ref{ass:error}.
On the one hand, \eqref{eq:ass:2} is a direct consequence of classical finite element error analysis combined with the spatial combination technique convergence theory, provided that $u$ admits sufficient Sobolev mixed-order regularity \cite{griebel1990combination,combination_te}. However, for general domains and/or with control constraints, the control might not enjoy such high regularity. In these cases, the CT could still be applied either on a fixed spatial mesh, or by letting $\alpha$ be a scalar setting the overall mesh size, as in standard multilevel approaches \cite{harbrecht2012multilevel,van2019robust}. 

Concerning \eqref{eq:ass:4}, it is well-known that hierarchical surpluses satisfy such hypothesis in the context of sparse grids interpolants for forward UQ problems, provided that the solution map of the random PDE is holomorphic in a Bernstein ellipse in the complex plane \cite{nobile2016convergence,haji2016multi,cohen2015approximation}. In our setting, we intuitively remark that \eqref{eq:ass:4} requires the map $\zetab\in \Gamma \rightarrow p(\zetab)\in V$ to be holomorphic in a region of the complex plane, which in turn requires holomorphic regularity of the quantity of interest $F$ and of the inverse of the operator $e$ with respect to $\zetab$.

Finally, \eqref{ass:produc_structure_E} assumes a product structure of the decay of the mixed spatial and stochastic surpluses. For forward UQ problems, this property has been extensively studied and exploited in several works, see, e.g., \cite{harbrecht2012multilevel,teckentrup2015multilevel,MIMC,haji2016multi,haji2016multi2,beck2019iga}. The rigourous extension of these results to the present context requires a highly technical theoretical analysis that is beyond the scope of this work and it is the subject of a forthcoming manuscript.

Here, we limit to verify \eqref{eq:ass:2}, \eqref{eq:ass:4} and \eqref{ass:produc_structure_E} numerically, and we estimate the rates $\widetilde{r}_i$ and $\widetilde{g}_i$. To do so, we fix a multi-index $\betab=\bar{\betab}$, set $\alphab=\unob+j\bar{\alphab}$, and fit the decay of $\Delta E_{\alphab,\bar{\betab}}$ for some different values of $j$. Similarly, we fix a $\alphab=\bar{\alphab}$ and set $\betab=\unob+j\bar{\betab}$ and fit the decay of $\Delta E_{\bar{\alphab},\betab}$.
The mixed decays are fitted with the same procedure.
The results are reported in Figures \ref{Fig:fitted}, which clearly show that the assumptions are verified for the model problem. In our setting, we found $\widetilde{r}_i=2$, for all $i$, while the rates $\widetilde{g}_i$ are reported in Table \ref{tab:rates}. Notice that the estimated rates $\widetilde{g}_i$ may depend on the geometrical mesh used, but the dependence is quite mild (see \cite{beck2019iga} for a procedure to update the estimate of $\widetilde{g}_i$ on the fly).
\begin{figure}
\centering
\includegraphics[scale=0.34]{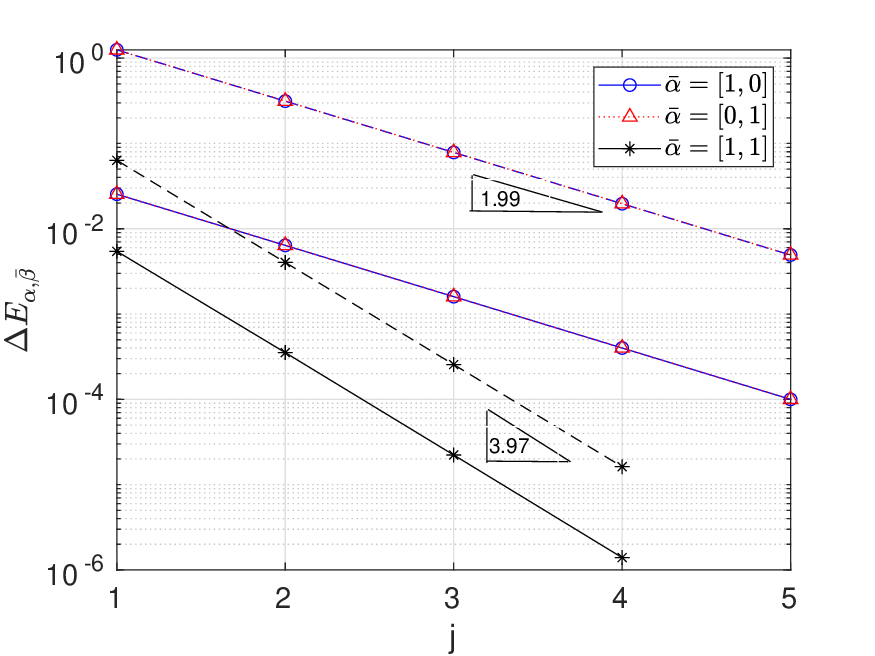}
\includegraphics[scale=0.34]{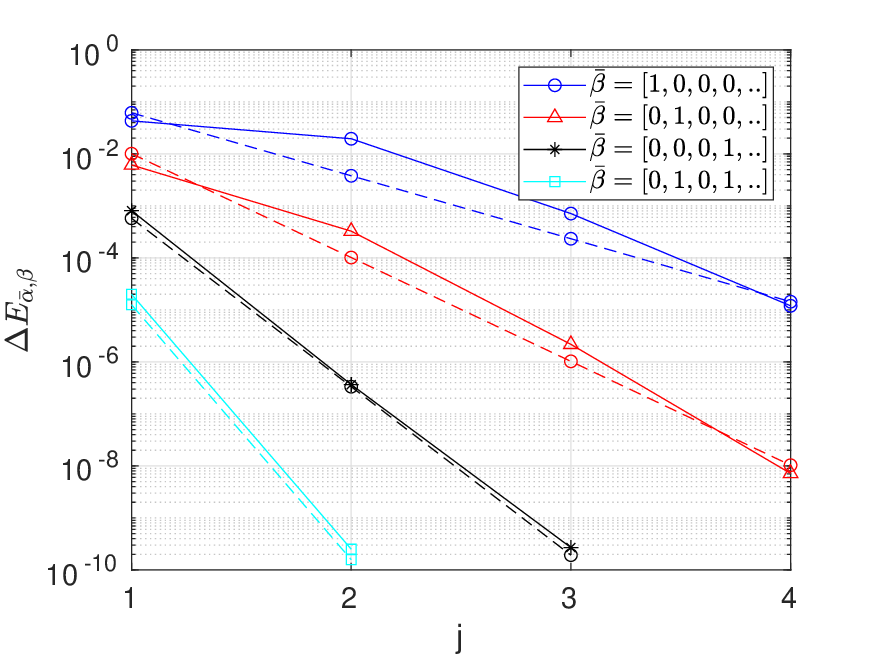}
\includegraphics[scale=0.34]{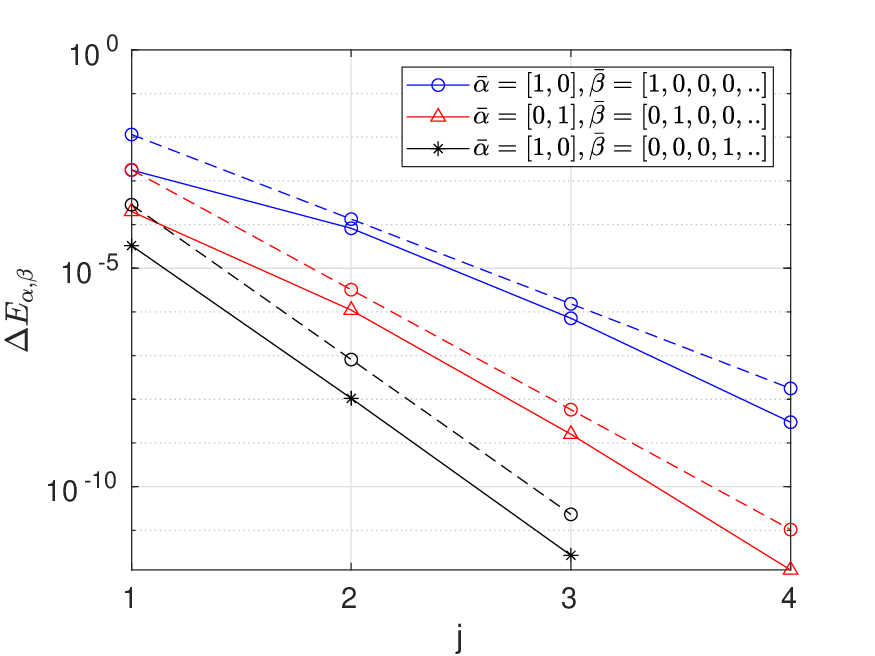}
\caption{Numerical validation of Assumption \eqref{eq:ass:2}, \eqref{eq:ass:4} and \eqref{ass:produc_structure_E} on the decay of the error contributions. Spatial error contributions with $\bar{\betab}=\bm{1}$ (left), stochastic error contributions with $\bar{\alphab}=(3,3)$ (center) for the first four random variables, and mixed spatial and stochastic error contributions (right). The solid lines are based on computed values, the dashed lines are the fitted ansatzes.}\label{Fig:fitted}
\end{figure}
\begin{table}
\centering
\begin{tabular}{|c | c | c | c | c | c | c | c | c | c |c}
\hline
$\widetilde{g}_1$ & $\widetilde{g}_2$ & $\widetilde{g}_3$ & $\widetilde{g}_4$ & $\widetilde{g}_5$ & $\widetilde{g}_6$ & $\widetilde{g}_7$ & $\widetilde{g}_8$ & $\widetilde{g}_9$ & $\widetilde{g}_{10}$ & $\ldots$\\\hline
2.78 & 4.59 & 5.79 & 7.45 & 8.18 & 9.85 & 10.97 & 12.98 & 14.18 & 14.57 & $\ldots$\\
\hline
\end{tabular}
\caption{Fitted rates $\widetilde{g}_i$ for the first 10 random variables in the expansion of \eqref{eq:diffusion_coefficient}.}\label{tab:rates}
\end{table}

\subsection{Numerical tests}
We now show the performance of the CT to solve \eqref{eq:OCP_example}. To construct the multi-index sets, we rely on the sparse-grid Matlab Kit \cite{back2011stochastic}.
We first consider the CT approximation \eqref{eq:CT_onlystoch} applied exclusively on the quadrature formula of the objective functional. For every multi-index $\betab$, we use the same physical discretization determined by a fixed $\bar{\alphab}$. 
More specifically, we compare five methods:
\begin{itemize}
\item[Meth. 1] An ``a-priori incremental" algorithm in which $\I$ is built adaptively by using the ansatzes \eqref{eq:ass:3} and \eqref{eq:ass:4} to compute the profits for all multi-indices in the reduced margin. The parameters $\left\{\widetilde{g}_n\right\}_n$ are estimated beforehand, and this cost is not considered. This algorithm corresponds to Alg. \ref{alg:2}.
\item[Meth. 2] The well-known adaptive algorithm \cite{gerstner2003dimension} in which $\I$
is built adaptively computing numerically $\Delta E_{\bar{\alphab},\betab}$ (and thus also the profits $P_{\bar{\alphab},\betab}$) for all multi-indices in the reduced margin. We keep track only of the work done to solve the OCPs that actually contribute to the CT solution \eqref{eq:CT_estimator}, thus neglecting the cost of exploring the reduced margin (which might dominate the overall cost). In this way, the adaptive algorithm can be considered as a benchmark for the a-priori CT. This algorithm corresponds to Alg. \ref{alg:1}.
\item[Meth. 3] An a-priori algorithm based on the CT formulation \eqref{eq:CT_estimator_equiv}. In particular, given the multi-index set $\I^k$ built at the $k$-th step of the a-priori incremental algorithm (Meth. 1), we check for which $\betab$, $c_{\betab}\neq 0$, and consequently compute the approximation using \eqref{eq:CT_estimator_equiv}. The approximation will be identical to the solution of Meth. 1. However, the work performed will be less, as several multi-indices $\betab$ do not contribute to the approximation.
\item[Meth. 4] An anisotropic tensor product approximation: given a sequence of levels $L$, we set $\beta_n=1+\lfloor \frac{L}{\widetilde{g}_n}\rfloor$. 
Notice that the anisotropic tensor product approximation can be recast in the CT framework, using the multi-index set $\I(L)=\left\{\betab \in \mathbb{N}^N: \max_n \widetilde{g}_n(\beta_n-1)\leq L\right\}$, see \cite{back2011stochastic}. 
\item[Meth. 5] A standard sparse grid method (see Remark \ref{remark:sparse_grids}) based on the multi-index set $\I(L)=\left\{\betab\in \mathbb{N}^N: \sum_{i=1}^N \widetilde{g}_n(\beta_n-1)\leq L\right\}$ with Gauss-Legendre nodes and a linear level-to-node map.
\end{itemize}

Fig. \ref{Fig:onlystoch} shows the complexity (error vs work) of the different methods for $N=2,6,10$ random variables and $d=2$. 
The work reported in the x-axis corresponds to $W=\sum_{\betab\in\I} M_{\betab}$ for Meth. 1 and Meth. 2, 
$W=\sum_{\betab\in\I: c_{\betab}\neq 0}M_{\betab}$ for Meth. 3,
$W=M_{\betab}$ for Meth. 4, and $W=|\widetilde{\Lambda}_{\betab}|$, i.e. the number of nodes involved in the sparse grid quadrature, for Meth. 5.
The error corresponds to the $L^2$ norm between the current approximation and a reference overkilled solution computed with Alg. 2.

Notice that the a-priori algorithms converge very similarly to the adaptive algorithm, thus confirming the effectiveness of the a-priori construction based on the model-fitted ansatzes.
Generally, the CT needs a few random variables in order to be more effective than anisotropic full tensor approximations, as sparse approximations are more efficient for sufficiently high-dimensional problems \cite{bungartz2004sparse}.

The convergence of the CT is essentially unchanged moving from $N=6$ to $N=10$ random variables. Indeed, the random variables are increasingly less important, being weighted in \eqref{eq:diffusion_coefficient} by $\lambda_n$ which tends to zero exponentially, and their rates $\widetilde{g}_i$ are increasingly larger (see Table \ref{tab:rates}). Hence, the addition of a new random variable requires only to compute quite cheap hierarchical surplus $\Delta^{\betab} u$. This can be observed in the left and center panel of Fig \ref{Fig:set}, which show three components of the multi-indices $\betab$ included by Meth. 2 for $d=2$ and $N=10$. We clearly observe that whenever $\beta_9$ or $\beta_{10}$ are greater than 1, all the remaining components of $\betab$ are very small, which implies that the corresponding OCP will involve very few quadrature nodes. 
In contrast, the addition of a new random variable, let's say the $N+1$-th, implies that the number of collocation points $M_{\beta}$ for the anisotropic tensor product approximation is multiplied by $\beta_{N+1}=1+\lfloor\frac{L}{\widetilde{g}_{N+1}}\rfloor$, and thus the single OCP that has to be solved becomes steadily larger. 
The classical sparse grid approach shows a computational complexity similar to that of the CT in all cases. However, due to the presence of negative weights, the full-space optimality system had to be solved with GMRES. The additional cost and memory usage due to the larger Krylov subspace have not been taken into account in the figure.
Finally, the right panel of Fig. \ref{Fig:onlystoch} verifies instead the simplified complexity estimate \eqref{eq:decay_simplified}, as the error decreases linearly in the log-log plot.

\begin{figure}
\centering
\includegraphics[scale=0.32]{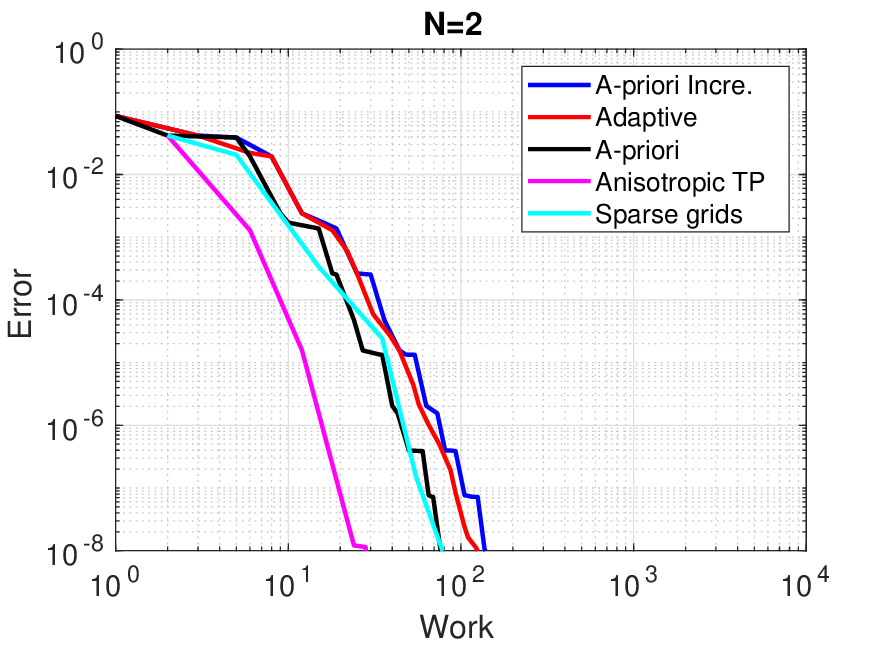}
\includegraphics[scale=0.32]{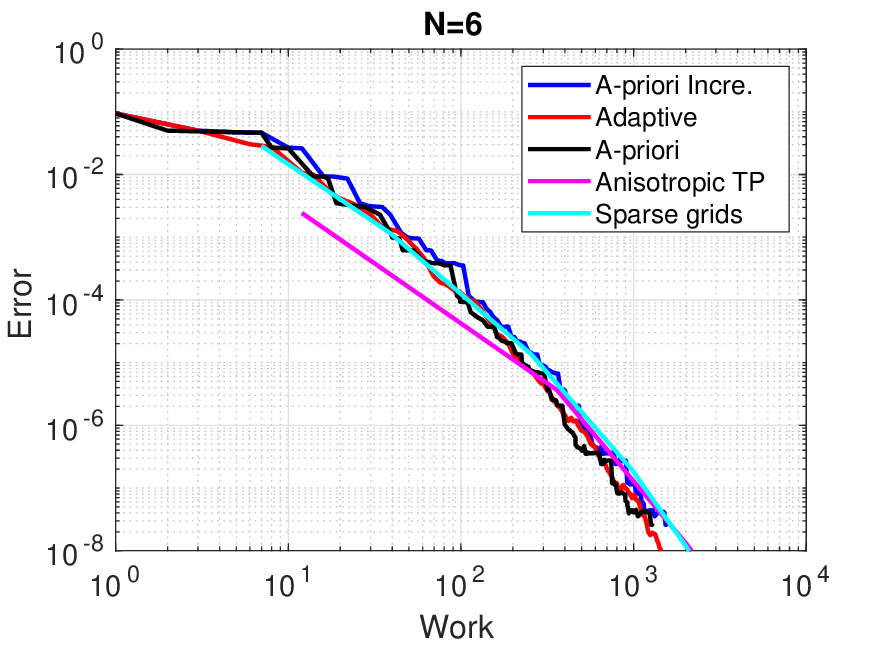}
\includegraphics[scale=0.32]{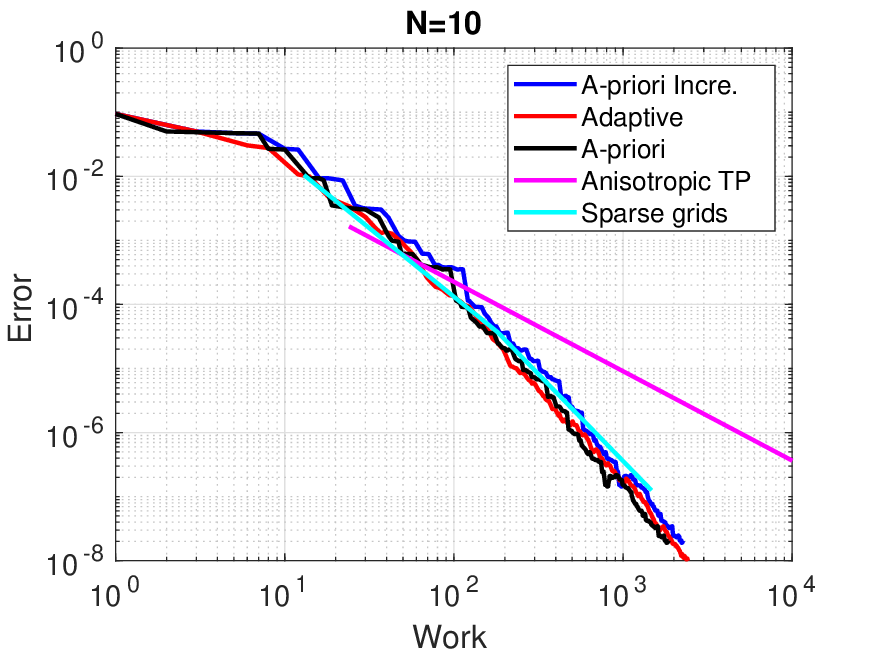}
\caption{Convergence behaviour of the different methods for $N=2,6,10$ random variables.}\label{Fig:onlystoch}
\end{figure}

\begin{figure}
\centering
\includegraphics[scale=0.32]{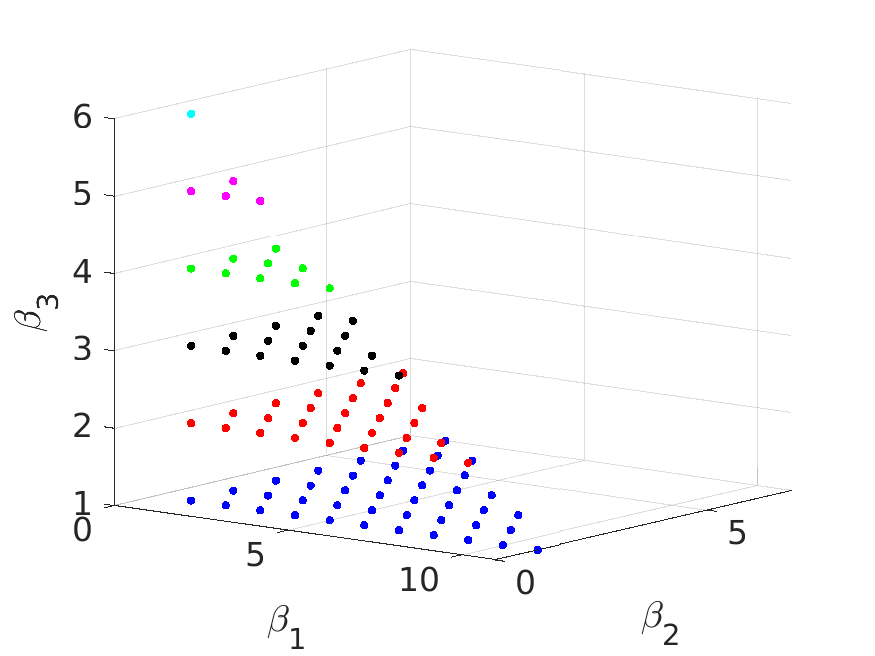}
\includegraphics[scale=0.32]{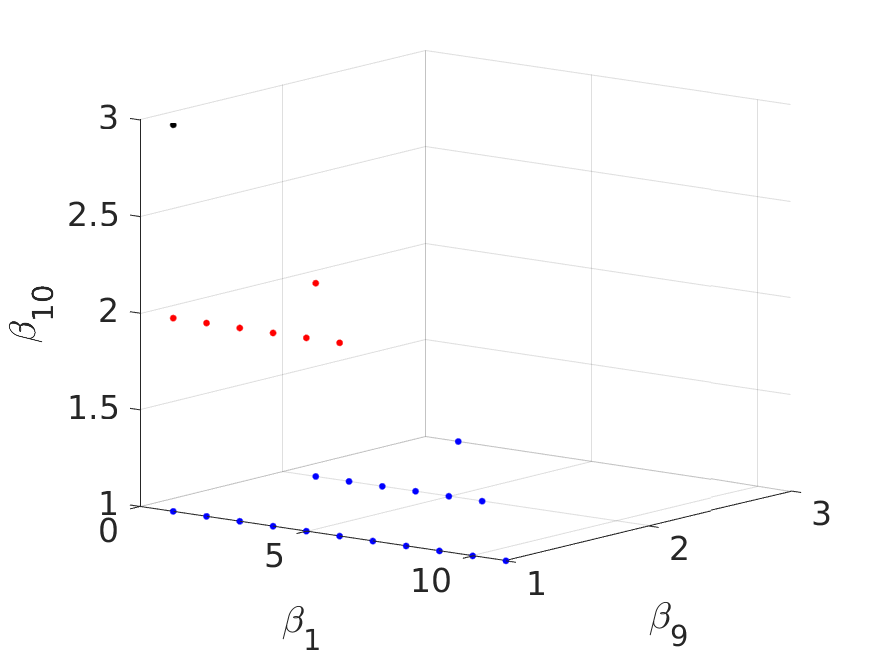}
\includegraphics[scale=0.32]{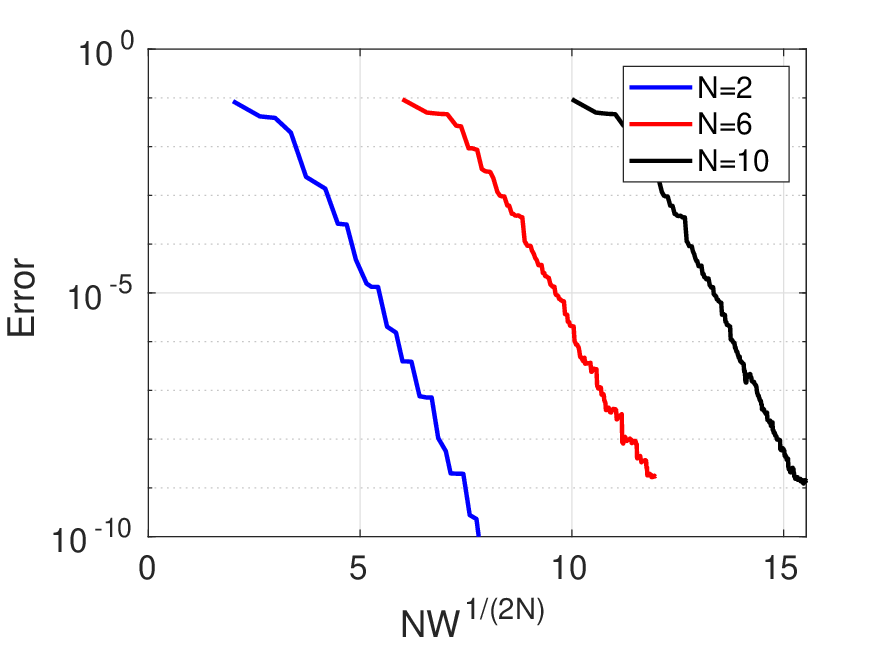}
\caption{The left and center panel show the sparsity pattern of $(\beta_1,\beta_2,\beta_3)$ and $(\beta_1,\beta_9,\beta_{10})$ of Alg. 2 for $d=2$ and $N=10$. The right panel verifies numerically the simplified convergence estimate \eqref{eq:decay_simplified}.}\label{Fig:set}
\end{figure}

Next, we study the complexity of the CT applied to both spatial and stochastic variables.
Fig. \ref{Fig:CTfull} shows the convergence for the model problem set in a one-dimensional domain (top row) and a two dimensional domain (bottom row). For Meth. 4 and 5, we consider a sequence of spatial meshes obtained by halving simultaneously the mesh size in each spatial direction, and an increasing sequence of levels $L$ for the stochastic quadrature.
The work reported in the x-axis corresponds to $W=\sum_{(\alphab,\betab)\in\I} \left(\prod_{n=1}^D 2^{\alpha_n+1}\right) M_{\betab}$ for Meth. 1 and Meth. 2, 
$W=\sum_{\betab\in\I: c_{\betab}\neq 0}\left(\prod_{n=1}^D 2^{\alpha_n+1}\right) M_{\betab}$ for Meth. 3, $W=\left(\prod_{n=1}^D 2^{\alpha_n+1}\right) M_{\betab}$ for Meth. 4, and $W=\left(\prod_{n=1}^D 2^{\alpha_n+1}\right)|\widetilde{\Lambda}_{\betab}|$ for Meth. 5.
We calculate a reference solution by running Meth. 2 with a very small tolerance, and we linearly interpolate all the solutions computed (which live on different meshes) on a very fine mesh obtained by taking, for each physical dimension, the smallest mesh size used by Meth. 2. In two dimensions, the reference solution has more than $2\cdot 10^8$ degrees of freedom.
To compute the error, we interpolate the current approximations on the reference mesh and calculate there the $L^2$ norm of the difference with the reference solution using a mass lumped matrix. Notice that the interpolation step does not introduce errors as the finite element basis functions are linear in 1D and bilinear in 2D, and the meshes are nested.

As $\widetilde{\gamma}_j=1$ and $\widetilde{r}_j=2$ for $j=1,2$, Theorem \ref{thm:stoc_det} predicts an \textit{asymptotic} complexity of $\text{Work}^{-2}$ for  the 1D setting and $\text{Work}^{-2}\log(\text{Work})^3$ for the 2D setting.
For few random variables (e.g. $N=2$) the asymptotic complexity is rapidly attained in both settings while for larger number of random variables pre-asymptotic effects are non negligible, and the asymptotic regime is achieved for larger values of $W$. Meth. 5 shows a worse complexity behaviour than that of the CT, especially for $d=2$ where we clearly see the benefits of the mixed spatial regularity.
Finally, we emphasize that both the anisotropic tensor product and sparse grids approximations are not able to achieve smaller errors on our workstation since the size of their optimality system quickly saturates the memory storage. By solving several OCPs characterized by smaller optimality systems, the CT is able to overcome the limitations set by the workstation and achieve much smaller tolerances.

\begin{figure}
\centering
\includegraphics[scale=0.324]{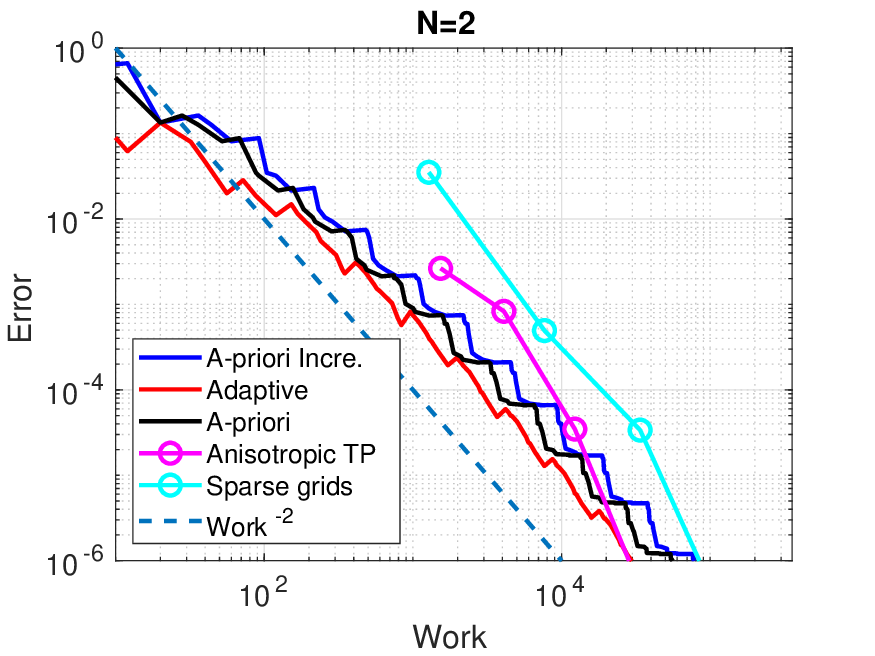}
\includegraphics[scale=0.324]{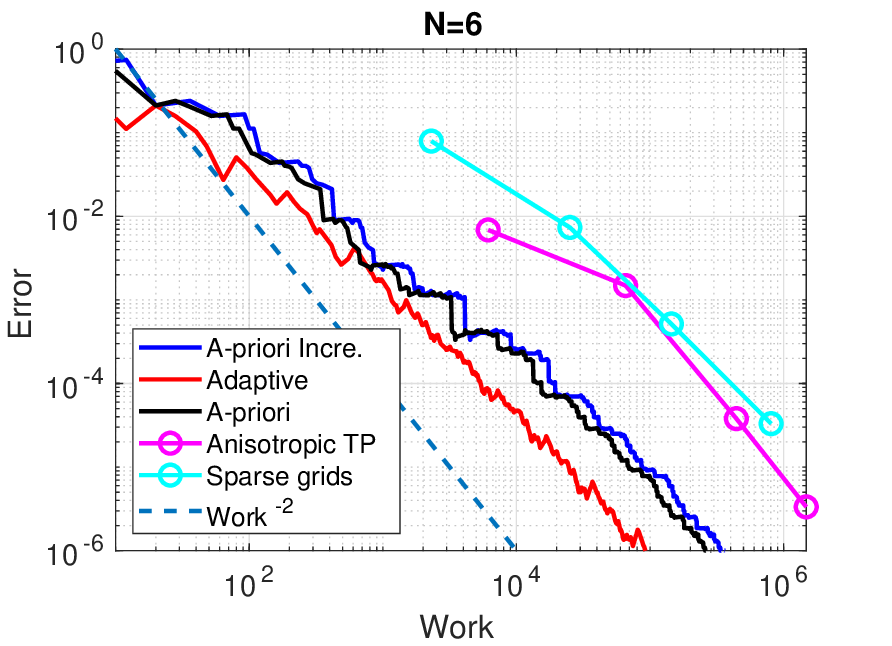}
\includegraphics[scale=0.324]{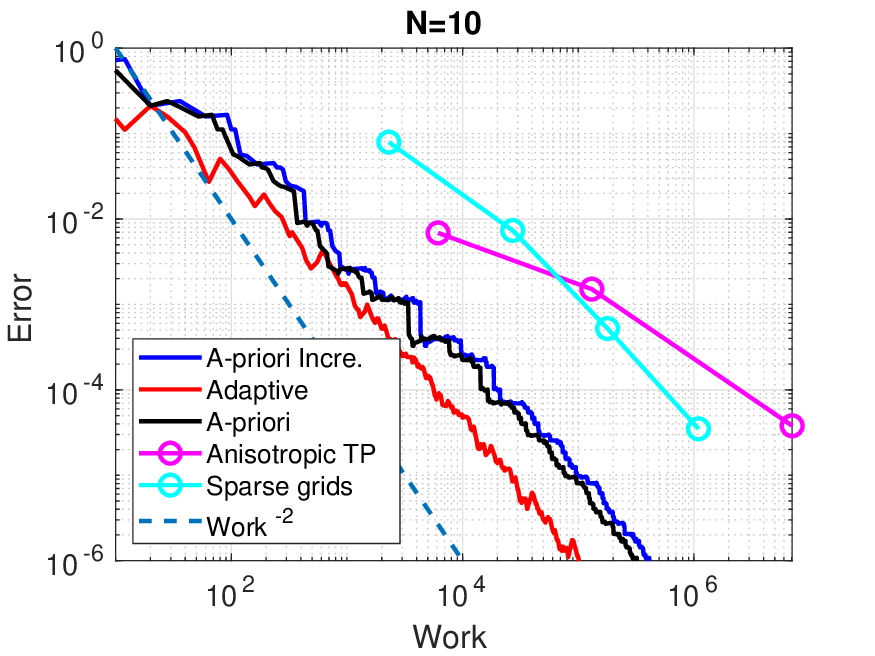}
\includegraphics[scale=0.324]{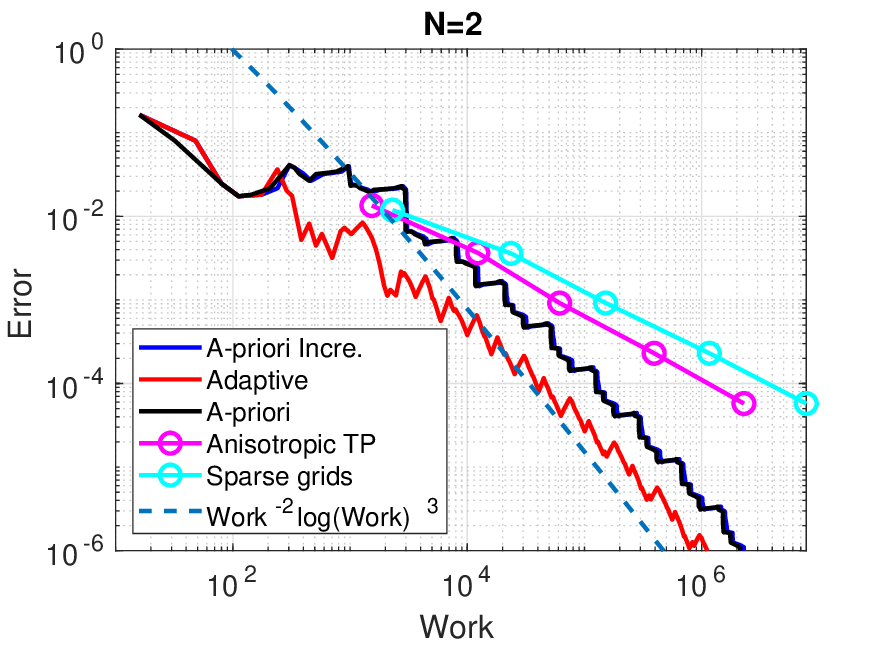}
\includegraphics[scale=0.324]{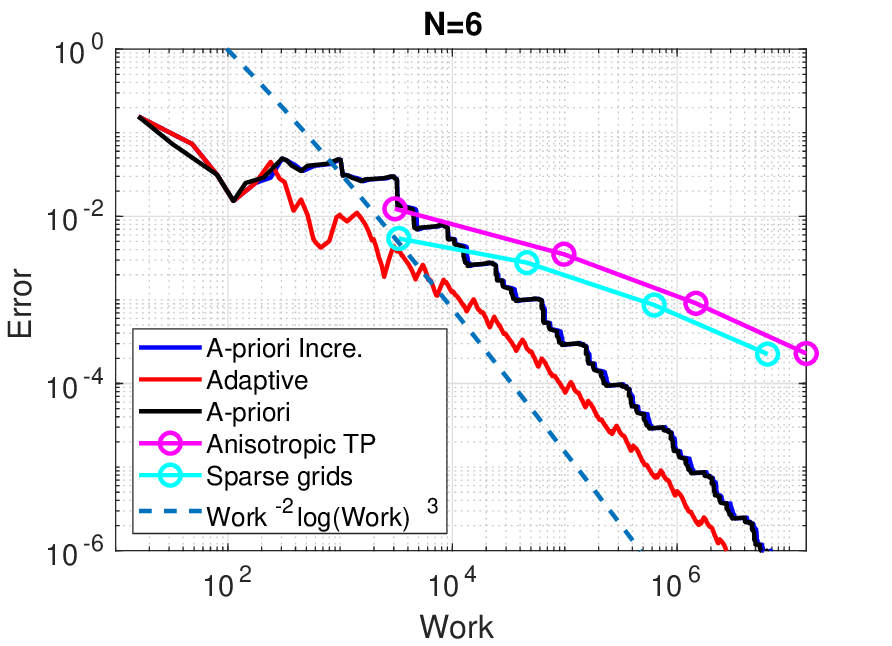}
\includegraphics[scale=0.324]{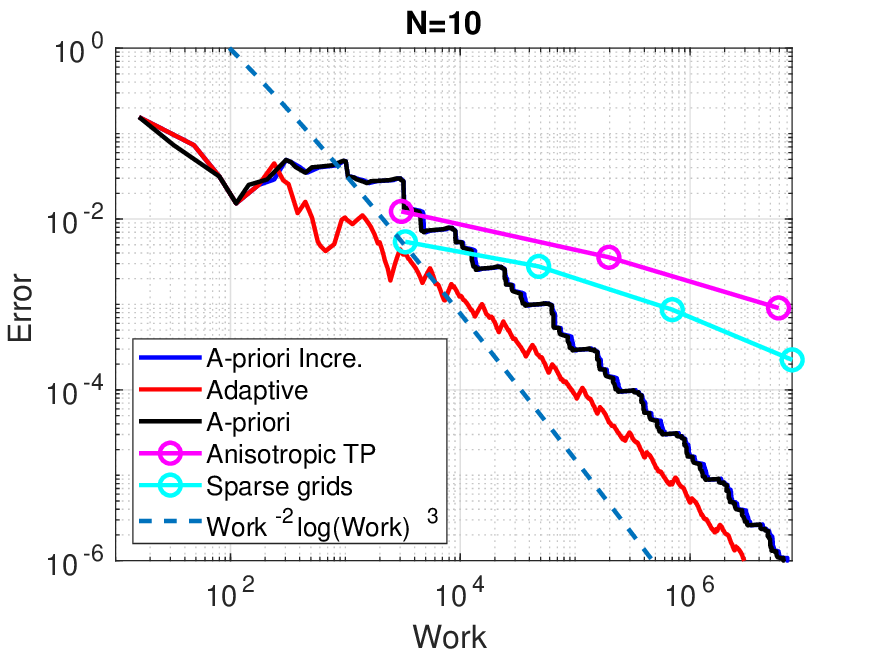}
\caption{Convergence of the combined $(\alphab,\betab)$ CT for a one-dimensional physical problem (top row) and a two-dimensional one (bottom row)}\label{Fig:CTfull}
\end{figure}

\section{Conclusions}
In this work, we proposed two variants of the combination technique to solve efficiently optimal control problems under uncertainty. The first one is based on the combination of solutions of several OCPs discretized using different tensor product quadrature formulae, but on the same spatial mesh. The approach allows to reduce the computational effort for high dimensional problems, while avoiding the inconveniences of a direct sparse grid approximation of the objective functional.
Then, we applied the combination technique both to the spatial and stochastic variables. Such procedure allows to automatically balance the error due to the spatial and stochastic discretization according to a profit rule and it permits to further reduce the computational cost by moving most computations on coarser grids.
The generalization of this work to an infinite sequence of random variables, including a theoretical analysis for the decay of the error contributions, is currently ongoing and will be the subject of a forthcoming manuscript. Further efforts will be devoted to test the combination technique for OCPs involving control constraints and more general risk measures. The problem formulation \eqref{eq:OCP} considered here is already sufficiently general to cover some common risk-measures such as the mean-variance model. Of particular interest in risk-adverse contexts is the CVaR (Conditional Value at Risk). Although the minimization of the CVaR can be formulated as a double minimization problem leading to an optimality condition of the form of \eqref{eq:optimality_condition}, it also involves a discontinuous function of the random vector $\zetab$ acting on the right-hand side of the adjoint equation which might reduce the stochastic regularity. Thus, a CT based on a multilevel (Quasi)-Monte Carlo quadrature seems promising as it requires less smoothness than the stochastic collocation method.

\section{Appendix}
In this appendix, we detail the proofs of Theorem \ref{thm:stoc} and \ref{thm:stoc_det}. Both proofs are based on a direct counting argument, and in particular the proof of Theorem \ref{thm:stoc_det} is an adaptation of the proof of Theorem 1 in \cite{haji2016multi} to a different multi-index set.

For a $\mathbf{x}=(x_1,\dots,x_N)\in \mathbb{R}^N$, let $|\mathbf{x}|:=\sum_{n=1}^N |x_i|$ and $\log(\mathbf{x})=(\log(x_1),\dots,\log(x_N))$.
The following technical Lemmae are needed.
\begin{lemma}[Lemma 4 in \cite{haji2016multi}]\label{lemma:1}
Let $f:(1,\infty)^N\rightarrow \mathbb{R}$ and $g:(1,\infty)^N\rightarrow \mathbb{R}_{+}$. If $f$ and $g$ are increasing, then
\[\sum_{\alphab\in \mathbb{N}^N_{+}: f(\alphab)\leq 0} g(\alphab)\leq \int_{\mathbf{x}\in (1,\infty)^N: f(\mathbf{x}-\unob)\leq 0} g(\mathbf{x})d\mathbf{x}.\]
If $f$ and $g$ are decreasing, then
\[\sum_{\alphab\in \mathbb{N}^N_{+}: f(\alphab)\leq 0}g(\alphab)\leq \int_{\mathbf{x}\in (1,\infty)^N: f(\mathbf{x})\leq 0} g(\mathbf{x}-\unob)d\mathbf{x}.\]
\end{lemma}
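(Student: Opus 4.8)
# Proof Proposal for Lemma 1 (Lemma 4 in [haji2016multi])

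\textbf{Overview of the approach.} The statement is a multidimensional comparison between a sum over lattice points and an integral over a shifted region, valid under monotonicity of both the summand $g$ and the constraint function $f$. The natural strategy is to dominate each term $g(\alphab)$ of the sum by the integral of $g$ over a unit cube attached to $\alphab$, chosen so that: (i) these cubes are pairwise disjoint (they tile $\mathbb{R}^N$ up to measure zero), and (ii) the union of the cubes indexed by $\{\alphab : f(\alphab)\le 0\}$ is contained in the integration domain appearing on the right-hand side. Monotonicity of $g$ guarantees (i) gives a genuine lower bound on the cube integral, and monotonicity of $f$ guarantees the containment (ii). I would treat the increasing case in full and note that the decreasing case follows by an analogous (or reflected) argument.

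\textbf{Key steps for the increasing case.} First, for each $\alphab \in \mathbb{N}_+^N$ associate the half-open cube $Q_{\alphab} := \prod_{n=1}^N (\alpha_n, \alpha_n+1]$. These cubes are pairwise disjoint and their union is $(1,\infty)^N$. Since $g$ is increasing and coordinatewise $\xb \ge \alphab$ for $\xb \in Q_{\alphab}$, we have $g(\alphab) \le g(\xb)$ for all $\xb \in Q_{\alphab}$, hence
\[
g(\alphab) \le \int_{Q_{\alphab}} g(\xb)\, d\xb .
\]
Next, suppose $\alphab$ satisfies $f(\alphab) \le 0$. For any $\xb \in Q_{\alphab}$ we have $\xb - \unob \in \prod_n (\alpha_n - 1, \alpha_n]$, so $\xb - \unob \le \alphab$ coordinatewise; since $f$ is increasing, $f(\xb - \unob) \le f(\alphab) \le 0$. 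Therefore $Q_{\alphab} \subseteq \{\xb \in (1,\infty)^N : f(\xb - \unob) \le 0\}$ whenever $f(\alphab)\le 0$. Summing the cube bound over all such $\alphab$ and using disjointness of the cubes together with this containment yields
\[
\sum_{\alphab \in \mathbb{N}_+^N : f(\alphab)\le 0} g(\alphab)
\;\le\; \sum_{\alphab : f(\alphab)\le 0} \int_{Q_{\alphab}} g(\xb)\, d\xb
\;\le\; \int_{\{\xb \in (1,\infty)^N : f(\xb - \unob)\le 0\}} g(\xb)\, d\xb,
\]
which is the claimed inequality.

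\textbf{The decreasing case and the main obstacle.} For the decreasing case, the roles of the cube corners flip: one attaches to $\alphab$ the cube $\prod_n (\alpha_n - 1, \alpha_n]$ (equivalently $Q_{\alphab-\unob}$), on which $g(\xb - \unob) \le g(\alphab - \unob)$ fails in the wrong direction, so instead one bounds $g(\alphab) \le g(\xb - \unob)$ using that $\xb - \unob \le \alphab$ there and $g$ decreasing; and $f(\xb) \le f(\alphab) \le 0$ since $\xb \ge \alphab$... wait — one must be careful here: with $\xb$ in the cube to the ``lower-left'' of $\alphab$ we actually have $\xb \le \alphab$, so $f(\xb) \ge f(\alphab)$ for decreasing $f$, which is the wrong direction. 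The correct choice is the cube $\prod_n [\alpha_n, \alpha_n+1)$: there $\xb \ge \alphab$ gives $f(\xb) \le f(\alphab) \le 0$ (containment in the right-hand domain), while $\xb - \unob \le \alphab$ gives $g(\xb - \unob) \ge g(\alphab)$ (the needed pointwise lower bound on $g(\xb-\unob)$). Disjointness of these cubes as $\alphab$ ranges over $\mathbb{N}_+^N$ then closes the argument. The only genuine subtlety — and the step I would double-check most carefully — is this bookkeeping of which corner of the unit cube to attach and in which direction each monotonicity assumption is being used; everything else is the elementary ``Riemann-sum domination'' principle. I would also implicitly assume $g$ (hence the integrals) is measurable and that the right-hand integrals are allowed to be $+\infty$, in which case the inequality is trivially true.
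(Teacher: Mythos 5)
Your proof is correct and is the standard unit-cube (Riemann-sum) comparison argument; the paper itself gives no proof of this lemma, importing it directly as Lemma 4 of \cite{haji2016multi}, where essentially the same cube-tiling argument is used. The only caveat is a domain technicality already present in the statement rather than in your argument: evaluating $f(\xb-\unob)$ (resp.\ $g(\xb-\unob)$) for $\xb\in(1,2]^N$ and $g(\alphab)$ at $\alphab=\unob$ implicitly requires $f$ and $g$ to be defined on $[1,\infty)^N$ or $(0,\infty)^N$ rather than the open set $(1,\infty)^N$, which you may wish to note when fixing the bookkeeping of the cube corners in the decreasing case.
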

\begin{lemma}\label{lemma:2}
The following bound holds true
\[\int_{\mathbf{x}\in (0,\infty)^N: |\mathbf{x}|\leq H} \mathbf{x}\;d\mathbf{x}= \frac{H^{2N}}{(2N)!},\]
\end{lemma}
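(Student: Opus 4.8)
The plan is to prove the identity by induction on the dimension $N$, viewing it as the special case $a_1=\dots=a_N=2$ of the classical Dirichlet integral over the standard simplex. Throughout, the notation $\mathbf{x}\,d\mathbf{x}$ on the region $\{\mathbf{x}\in(0,\infty)^N:|\mathbf{x}|\le H\}$ is read as $\prod_{n=1}^N x_n\,dx_1\cdots dx_N$, which is the form actually used in the proof of Theorem \ref{thm:stoc}.

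For the base case $N=1$ the region is the interval $(0,H)$ and $\int_0^H x\,dx=\tfrac{H^2}{2}=\tfrac{H^2}{2!}$, as claimed. For the inductive step, assume the formula holds in dimension $N-1$ and slice the region at a fixed value $x_N=t$ with $t\in(0,H)$: the cross-section is $\{\mathbf{x}'\in(0,\infty)^{N-1}:|\mathbf{x}'|\le H-t\}$, so by Fubini's theorem and the inductive hypothesis
\[
\int_{\mathbf{x}\in(0,\infty)^N:|\mathbf{x}|\le H}\prod_{n=1}^N x_n\,d\mathbf{x}
=\int_0^H t\left(\int_{\mathbf{x}'\in(0,\infty)^{N-1}:|\mathbf{x}'|\le H-t}\prod_{n=1}^{N-1}x'_n\,d\mathbf{x}'\right)dt
=\int_0^H t\,\frac{(H-t)^{2N-2}}{(2N-2)!}\,dt .
\]
The remaining one-dimensional integral is a Beta integral: the substitution $t=Hs$ gives $H^{2N}\int_0^1 s(1-s)^{2N-2}\,ds$, and $\int_0^1 s(1-s)^{2N-2}\,ds=\tfrac{1!\,(2N-2)!}{(2N)!}$ (two integrations by parts, or the Beta–Gamma identity). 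Dividing by $(2N-2)!$ leaves exactly $\tfrac{H^{2N}}{(2N)!}$, which closes the induction.

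There is no genuine obstacle here; the only points requiring a little care are the evaluation of the elementary Beta integral $\int_0^1 s(1-s)^{2N-2}\,ds=\tfrac{(2N-2)!}{(2N)!}$ and the bookkeeping of factorials in the recursion. As an alternative to the induction, one may simply invoke the closed form of the Dirichlet integral, $\int_{\mathbf{x}\in(0,\infty)^N:|\mathbf{x}|\le H}\prod_{n=1}^N x_n^{a_n-1}\,d\mathbf{x}=H^{\sum_n a_n}\,\tfrac{\prod_{n=1}^N\Gamma(a_n)}{\Gamma\!\left(1+\sum_{n=1}^N a_n\right)}$, and specialize to $a_n=2$ for every $n$, using $\Gamma(2)=1$ and $\Gamma(2N+1)=(2N)!$.
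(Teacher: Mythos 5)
Your proof is correct and follows essentially the same route as the paper, which simply says to rescale via $\mathbf{t}=\mathbf{x}/H$ and induct on $N$; you have merely written out the induction and the resulting Beta integral in full detail.
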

\begin{proof}
Perform the change of variables $\mathbf{t}=\frac{\mathbf{x}}{H}$ and use an induction argument over $N$.
\end{proof}

\begin{lemma}\label{lemma:3}
The following bound holds true
\[\int_{\mathbf{x}\in (0,\infty)^N: |\mathbf{x}|\geq L} e^{-|\mathbf{x}|+|\log(\mathbf{x})|} d\mathbf{x}\leq e^{-L}(L+1)^{2N-1}.\]
\end{lemma}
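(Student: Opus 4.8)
The goal is to bound the integral $\int_{\mathbf{x}\in(0,\infty)^N:|\mathbf{x}|\geq L} e^{-|\mathbf{x}|+|\log(\mathbf{x})|}\,d\mathbf{x}$, where $|\log(\mathbf{x})|=\sum_{n=1}^N\log x_n$, so that the integrand factorizes as $\prod_{n=1}^N x_n e^{-x_n}$. The natural first move is to pass to the total-mass variable: write $t=|\mathbf{x}|=\sum_n x_n$ and integrate over the simplex slice $\{\mathbf{x}\in(0,\infty)^N:|\mathbf{x}|=t\}$ first, then over $t\in[L,\infty)$. On the slice of total mass $t$, by AM--GM (or simply because $\prod x_n$ is maximized on the simplex at the barycenter), $\prod_{n=1}^N x_n\leq (t/N)^N$, and the $(N-1)$-dimensional measure of that slice is $t^{N-1}/(N-1)!$ (after accounting for the $\sqrt{N}$ Jacobian, or more cleanly: $\int_{|\mathbf{x}|\le t}d\mathbf{x}=t^N/N!$, so the co-area derivative is $t^{N-1}/(N-1)!$). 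Hence
\[
\int_{|\mathbf{x}|\geq L} e^{-|\mathbf{x}|}\prod_n x_n\,d\mathbf{x}\;\leq\;\int_L^\infty e^{-t}\,\frac{t^{N-1}}{(N-1)!}\cdot\frac{t^N}{N^N}\,dt\;=\;\frac{1}{(N-1)!\,N^N}\int_L^\infty e^{-t}t^{2N-1}\,dt.
\]

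\textbf{Second step.} Now I need an upper bound on the upper incomplete Gamma integral $\int_L^\infty e^{-t}t^{2N-1}\,dt=\Gamma(2N,L)$ that produces the clean right-hand side $e^{-L}(L+1)^{2N-1}$. The classical bound is $\Gamma(k,L)\leq$ (something like) $e^{-L}\sum_{j=0}^{k-1}\frac{(k-1)!}{j!}L^j$, which for $k=2N$ is exactly $e^{-L}(2N-1)!\sum_{j=0}^{2N-1}L^j/j!\leq e^{-L}(2N-1)!\,e^{L}$... that's too crude. Instead I would use the sharper elementary estimate: integrate by parts repeatedly, or directly substitute $t=L+s$, giving $\int_0^\infty e^{-L-s}(L+s)^{2N-1}\,ds=e^{-L}\int_0^\infty e^{-s}(L+s)^{2N-1}\,ds$. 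Then bound $(L+s)^{2N-1}\le$ ... hmm, one wants $\int_0^\infty e^{-s}(L+s)^{2N-1}ds$ to be at most $(2N-1)!\,(L+1)^{2N-1}$ or similar. Expanding binomially, $\int_0^\infty e^{-s}(L+s)^{2N-1}ds=\sum_{j=0}^{2N-1}\binom{2N-1}{j}L^{2N-1-j}j!=(2N-1)!\sum_{j=0}^{2N-1}L^{2N-1-j}/(2N-1-j)!=(2N-1)!\sum_{i=0}^{2N-1}L^i/i!\leq(2N-1)!\,(1+L)^{2N-1}$ by the binomial theorem applied to $(1+L)^{2N-1}\geq\sum_{i}\binom{2N-1}{i}L^i\ge$ ... wait, that inequality goes the wrong way since $\binom{2N-1}{i}\le(2N-1)!/i!$ is false. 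The correct comparison is $\sum_{i=0}^{2N-1}L^i/i!\le\sum_{i=0}^{2N-1}\binom{2N-1}{i}L^i=(1+L)^{2N-1}$, which holds because $1/i!\le\binom{2N-1}{i}$ for $0\le i\le 2N-1$ (indeed $\binom{2N-1}{i}=\frac{(2N-1)!}{i!(2N-1-i)!}\ge\frac{1}{i!}$). Combining, $\Gamma(2N,L)\le e^{-L}(2N-1)!(1+L)^{2N-1}$.

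\textbf{Closing.} Plugging back, the bound becomes $\frac{(2N-1)!}{(N-1)!\,N^N}\,e^{-L}(L+1)^{2N-1}$, so it remains to check $(2N-1)!\le(N-1)!\,N^N$. Since $(2N-1)!/(N-1)!=N(N+1)\cdots(2N-1)$ is a product of $N$ terms each at most $2N-1<2N$... that only gives $(2N)^N$, not $N^N$. So this route overshoots by a bounded-in-$N$-but-not-$1$ factor, meaning I should be slightly more careful: either absorb the slack by a better AM--GM/co-area accounting, or — more likely what the authors do — note that $(2N-1)!\le(N-1)!\,N^N$ is actually \emph{false}, so the clean constant $1$ in the statement must come from a tighter argument, e.g. bounding $\prod x_n e^{-x_n}$ pointwise differently, or using $\prod x_n\le\prod(x_n) $ with the constraint handled via a single-variable reduction that never introduces the $N^N$ denominator. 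The main obstacle, then, is getting the constant down to exactly $1$: the structure (reduce to $|\mathbf{x}|=t$, bound $\Gamma(2N,L)\le e^{-L}(2N-1)!(1+L)^{2N-1}$ via the substitution $t=L+s$ and the $1/i!\le\binom{2N-1}{i}$ comparison) is straightforward, but the combinatorial accounting of the simplex geometry must be arranged so that the leftover factorial cancels exactly rather than leaving an $N$-dependent constant. I would resolve this by writing $\prod_{n=1}^N x_n e^{-x_n}$ and integrating in the order $x_N,x_{N-1},\dots$ using the identity $\int_0^\infty x_n e^{-x_n}\,dx_n$-type reductions on the region $\{|\mathbf{x}|\ge L\}$ directly, which telescopes to $\Gamma(2N,L)$ without ever invoking AM--GM, thereby producing the constant $1$ cleanly.
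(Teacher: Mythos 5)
There is a genuine gap. The one computation you actually carry out --- AM--GM on the slice $\{|\mathbf{x}|=t\}$ together with the slice measure $t^{N-1}/(N-1)!$ --- yields the constant $\frac{(2N-1)!}{(N-1)!\,N^N}=\frac{N(N+1)\cdots(2N-1)}{N^N}$, which, as you correctly observe, exceeds $1$ (it is already $3/2$ for $N=2$), so that route does not prove the lemma. The proposed repair is only a one-sentence plan, and as stated it lands on the wrong quantity: iterated integration of $\prod_n x_n e^{-x_n}$ over $\{|\mathbf{x}|\ge L\}$ does not ``telescope to $\Gamma(2N,L)$'' but to $\Gamma(2N,L)/(2N-1)!$. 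If the integral really equalled $\Gamma(2N,L)$, then your own (correct) estimate $\Gamma(2N,L)\le e^{-L}(2N-1)!\,(1+L)^{2N-1}$ would leave a spurious factor $(2N-1)!$ rather than the constant $1$ (note $\Gamma(4,0)=6>1=(1+0)^3$, so $\Gamma(2N,L)\le e^{-L}(1+L)^{2N-1}$ is simply false). The missing step is precisely the identity
\[
\int_{\{\mathbf{x}\in(0,\infty)^N:\ |\mathbf{x}|\ge L\}}\Big(\prod_{n=1}^N x_n\Big)e^{-|\mathbf{x}|}\,d\mathbf{x}
=\int_L^\infty e^{-t}\,\frac{t^{2N-1}}{(2N-1)!}\,dt ,
\]
which follows by differentiating in $t$ the volume formula of Lemma~\ref{lemma:2}, $\int_{|\mathbf{x}|\le t}\prod_n x_n\,d\mathbf{x}=t^{2N}/(2N)!$ (a layer-cake argument that keeps the weight $\prod_n x_n$ inside, rather than separating the Lebesgue measure of the slice from an AM--GM bound on the weight). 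Once that identity is in place, your Second step finishes the proof, since $\Gamma(2N,L)/(2N-1)!=e^{-L}\sum_{j=0}^{2N-1}L^j/j!\le e^{-L}(1+L)^{2N-1}$ by your comparison $1/j!\le\binom{2N-1}{j}$.

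For the record, the paper does not argue this way: it proves the lemma by induction on $N$, deferring to the proof of Lemma~5 in \cite{haji2016multi}. Your exact-evaluation route is viable and would even give the sharper bound $e^{-L}\sum_{j=0}^{2N-1}L^j/j!$, but as submitted the decisive reduction is asserted rather than proved, and is mis-stated by exactly the factor $(2N-1)!$ that needs to cancel.
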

\begin{proof}
The proof based on induction is part of the proof of Lemma 5 in \cite{haji2016multi}.
\end{proof}

\begin{lemma}[Lemma 7 in \cite{haji2016multi}]\label{lemma:4}
Let $k\in \mathbb{N}$, $\mathbf{a}=(a_1,\dots,a_D)\in \mathbb{R}_+^D$ and $L>|\mathbf{a}|$. Then,
\[\int_{\left\{\mathbf{x}\in \mathbb{R}_+^D: |\mathbf{x}|\leq L\right\}} e^{\mathbf{a}\cdot \mathbf{x}}(L-|\mathbf{x}|)^kd\mathbf{x}\leq \mathcal{U}_D(\mathbf{a},k)e^{\max(\mathbf{a})L}L^{\eta(\mathbf{a},\max(\mathbf{a}))-1},\]
where $\eta(\mathbf{a},\max(\mathbf{a})):= \#\left\{i: a_i=\max(\mathbf{a})\right\}$ and $\mathcal{U}_D(\mathbf{a},k)$ is a constant independent on $L$.
\end{lemma}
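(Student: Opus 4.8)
The plan is to prove the bound by integrating out the components of $\mathbf{x}$, with the whole argument resting on a single elementary one-dimensional estimate. First I would record that, for $a>0$ and $s>0$, the substitution $t=s-x$ gives
\[
\int_0^s e^{ax}(s-x)^k\,dx = e^{as}\int_0^s e^{-at}t^k\,dt \;\le\; e^{as}\int_0^\infty e^{-at}t^k\,dt = \frac{k!}{a^{k+1}}\,e^{as}.
\]
This is the only genuine computation needed and it is completely routine.

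Next, since both the integrand and the constraint $|\mathbf{x}|\le L$ are invariant under a simultaneous permutation of the components of $\mathbf{x}$ and of $\mathbf{a}$, I would relabel so that $a_D=\max(\mathbf{a})=:M$. Writing $\mathbf{x}=(\mathbf{x}',x_D)$ and $\mathbf{a}'=(a_1,\dots,a_{D-1})$, Fubini's theorem together with the one-dimensional estimate above (applied with $s=L-|\mathbf{x}'|$ and $a=M$) yield
\[
\int_{\{|\mathbf{x}|\le L\}} e^{\mathbf{a}\cdot\mathbf{x}}(L-|\mathbf{x}|)^k\,d\mathbf{x} \;\le\; \frac{k!}{M^{k+1}}\,e^{ML}\int_{\{\mathbf{x}'\in\mathbb{R}_+^{D-1}:\,|\mathbf{x}'|\le L\}} e^{(\mathbf{a}'-M\unob)\cdot\mathbf{x}'}\,d\mathbf{x}'.
\]
To finish, set $S:=\{i\le D-1:\,a_i=M\}$, so that $|S|=\eta(\mathbf{a},\max(\mathbf{a}))-1$ and $M-a_i>0$ for $i\notin S$. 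Because the integrand is nonnegative and $\{\mathbf{x}'\ge 0:\,|\mathbf{x}'|\le L\}\subseteq\{\mathbf{x}'\ge 0:\,\sum_{i\in S}x_i\le L\}$, I would enlarge the domain to the latter product-type region; the integral then decouples into $\prod_{i\notin S}\int_0^\infty e^{(a_i-M)x_i}\,dx_i=\prod_{i\notin S}(M-a_i)^{-1}$ times the simplex volume $\int_{\{\mathbf{z}\in\mathbb{R}_+^{|S|}:\,|\mathbf{z}|\le L\}}1\,d\mathbf{z}=L^{|S|}/|S|!$. Collecting the constants gives the claim with
\[
\mathcal{U}_D(\mathbf{a},k)=\frac{k!}{M^{k+1}\,(\eta(\mathbf{a},\max(\mathbf{a}))-1)!}\prod_{i:\,a_i<M}\frac{1}{M-a_i},
\]
which is manifestly independent of $L$ (and one checks that the hypothesis $L>|\mathbf{a}|$ is in fact not needed for this estimate).

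The one delicate point — the main obstacle — is obtaining the sharp exponent $L^{\eta-1}$ rather than the naive $L^{D-1}$: after peeling off the dominant coordinate, one must recognize that only the remaining coordinates that still attain $\max(\mathbf{a})$ contribute polynomial growth in $L$, through the volume of an $(\eta-1)$-dimensional simplex, whereas each strictly sub-dominant coordinate contributes only the convergent constant $(M-a_i)^{-1}$ once its range is extended to $(0,\infty)$. The bookkeeping that makes this work is precisely the weakening of $|\mathbf{x}'|\le L$ to $\sum_{i\in S}x_i\le L$, which decouples the integral; everything else is routine calculus. The same reasoning can alternatively be organized as an induction on $D$, splitting the inductive step according to whether $a_D$ is the unique maximum of $\mathbf{a}$ or not, but the one-shot reduction above seems cleaner.
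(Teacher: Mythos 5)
Your proof is correct. Note that the paper itself does not prove this lemma --- it is imported by citation from Haji-Ali, Nobile, Tamellini and Tempone (Lemma 7 there, where the argument is organized as an induction on $D$), so there is no in-paper proof to compare against. Your one-shot reduction is a clean and complete substitute: the one-dimensional bound $\int_0^s e^{ax}(s-x)^k\,dx\le k!\,a^{-(k+1)}e^{as}$ is right, peeling off a coordinate attaining $\max(\mathbf a)=M$ via Fubini correctly produces the factor $e^{ML}$ together with the damped integrand $e^{(\mathbf a'-M\bm{1})\cdot\mathbf x'}$, and the enlargement of $\{|\mathbf x'|\le L\}$ to $\{\sum_{i\in S}x_i\le L\}$ legitimately decouples the remaining integral into the simplex volume $L^{|S|}/|S|!$ (giving exactly the exponent $\eta(\mathbf a,\max(\mathbf a))-1$) times the convergent factors $(M-a_i)^{-1}$ for the strictly subdominant coordinates. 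Your closing observations are also accurate: the hypothesis $L>|\mathbf a|$ is not used, and the explicit constant $\mathcal U_D(\mathbf a,k)$ you exhibit is independent of $L$. The only implicit requirement is $\max(\mathbf a)>0$ (otherwise $k!/M^{k+1}$ is undefined), which is guaranteed by $\mathbf a\in\mathbb R_+^D$ and holds in every application in the paper, where $\mathbf a=\mathbf\Theta$ has strictly positive entries.
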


\begin{lemma}[Lemma B.3 in \cite{MIMC}]\label{lemma:5}
Let $k\in \mathbb{N}$, $\mathbf{a}=(a_1,\dots,a_N)\in \mathbb{R}_+^D$ and $L>|\mathbf{a}|$. Then,
\[\int_{\left\{\mathbf{x}\in \mathbb{R}_+^D: |\mathbf{x}|>L\right\} } e^{-\mathbf{a}\cdot \mathbf{x}} \leq \mathcal{B}_D(\mathbf{a})e^{-\min(\mathbf{a})L}L^{\eta(\mathbf{a},\min(\mathbf{a}))-1},\]
where $\mathcal{B}_D(\mathbf{a})$ is a constant independent on $L$.
\end{lemma}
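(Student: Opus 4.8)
The plan is to prove the estimate by induction on the dimension $D$, integrating out one coordinate at each step and shrinking the budget $L$ accordingly. The base case $D=1$ is immediate: $\int_L^\infty e^{-a_1 x}\,dx = a_1^{-1}e^{-a_1 L}$, which is the asserted bound since here $\eta(\mathbf{a},\min(\mathbf{a}))=1$, so $L^{\eta(\mathbf{a},\min(\mathbf{a}))-1}=1$ and one takes $\mathcal{B}_1(\mathbf{a})=a_1^{-1}$ (the parameter $k$ plays no role). The heuristic behind the general case is Laplace's method: writing $\mathbf{x}=r\mathbf{u}$ with $r=|\mathbf{x}|$ and $\mathbf{u}$ in the unit simplex, the mass of $e^{-r\,\mathbf{a}\cdot\mathbf{u}}$ for large $r$ concentrates on the face where $\mathbf{a}\cdot\mathbf{u}$ attains its minimum, a sub-simplex of dimension $\eta(\mathbf{a},\min(\mathbf{a}))-1$, and after the radial integration this is what produces the power $L^{\eta(\mathbf{a},\min(\mathbf{a}))-1}$; the induction makes this rigorous without invoking Laplace asymptotics on a domain with corners.

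For the inductive step write $\mathbf{x}=(\mathbf{x}',x_D)$ with $\mathbf{x}'\in\mathbb{R}_+^{D-1}$, set $\mathbf{a}'=(a_1,\dots,a_{D-1})$, $m'=\min(\mathbf{a}')$, $\eta'=\eta(\mathbf{a}',m')$, and use Fubini to write the integral as $\int_0^\infty e^{-a_D x_D}\big(\int_{|\mathbf{x}'|>L-x_D}e^{-\mathbf{a}'\cdot\mathbf{x}'}\,d\mathbf{x}'\big)\,dx_D$. Bound the inner integral by its full-space value $\prod_{i<D}a_i^{-1}$ whenever $L-x_D\le|\mathbf{a}'|$ (in particular when $x_D\ge L$), and by the inductive hypothesis with budget $L-x_D$ otherwise. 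After the substitution $s=L-x_D$ the task reduces to estimating $\mathcal{B}_{D-1}(\mathbf{a}')\int_0^{L-|\mathbf{a}'|}e^{-a_D(L-s)}e^{-m's}s^{\eta'-1}\,ds$ together with the elementary tail coming from the region $L-x_D\le|\mathbf{a}'|$. One then distinguishes three cases according to the sign of $a_D-m'$. If $a_D>m'$, then $\min(\mathbf{a})=m'$ and $\eta(\mathbf{a},\min(\mathbf{a}))=\eta'$; bounding $s^{\eta'-1}\le L^{\eta'-1}$ and integrating the remaining (increasing) exponential yields $\le C\,e^{-m'L}L^{\eta'-1}$. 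If $a_D=m'$, then $e^{-a_D(L-s)}e^{-m's}=e^{-a_DL}$ and $\int_0^{L-|\mathbf{a}'|}s^{\eta'-1}\,ds$ contributes exactly one extra power of $L$, matching $\eta(\mathbf{a},\min(\mathbf{a}))=\eta'+1$. If $a_D<m'$, then $\min(\mathbf{a})=a_D$, $\eta(\mathbf{a},\min(\mathbf{a}))=1$, and factoring out $e^{-a_DL}$ leaves $\int_0^\infty e^{-(m'-a_D)s}s^{\eta'-1}\,ds<\infty$, an $L$-independent constant, while the region $L-x_D\le|\mathbf{a}'|$ contributes at most $\big(\prod_{i<D}a_i^{-1}\big)a_D^{-1}e^{-a_D(L-|\mathbf{a}'|)}$. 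In every case the constant is an explicit product of the $a_i^{-1}$ and $\Gamma$-function values, hence independent of $L$; this defines $\mathcal{B}_D(\mathbf{a})$ and closes the induction.

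The only real difficulty is bookkeeping. First, one must handle the transition region $0\le L-x_D\le|\mathbf{a}'|$, where the inductive hypothesis is unavailable: there the inner integral is controlled by the full-space value and the spurious factor $e^{a_D|\mathbf{a}'|}$ is absorbed into the constant. Second, one must track that the exponent of $L$ increases by exactly one precisely when $a_D$ ties the running minimum, so that after $D$ steps it equals $\eta(\mathbf{a},\min(\mathbf{a}))-1$ and no more. Since the statement is quoted verbatim from \cite[Lemma~B.3]{MIMC}, one may alternatively simply invoke that reference; the computation above records the elementary argument behind it.
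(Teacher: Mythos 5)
Your proof is correct, and it is worth noting that the paper itself offers no argument for this lemma: it is quoted verbatim from \cite[Lemma B.3]{MIMC} and used as a black box, so your inductive derivation is a genuine (and welcome) supplement rather than a reconstruction of anything in the text. The structure is sound: peeling off the last coordinate, splitting at $x_D=L-|\mathbf{a}'|$ so that the inductive hypothesis (which needs its budget to exceed $|\mathbf{a}'|$) applies on one piece and the trivial full-space bound on the other, and then the three-way case analysis on $a_D$ versus $m'=\min(\mathbf{a}')$ correctly reproduces the exponent $\eta(\mathbf{a},\min(\mathbf{a}))-1$ — the power of $L$ increments exactly when $a_D$ ties the running minimum, drops to $0$ when $a_D$ strictly undercuts it, and is inherited otherwise. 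Two small bookkeeping remarks. First, after the substitution $s=L-x_D$ the domain $x_D\in(0,\,L-|\mathbf{a}'|)$ becomes $s\in(|\mathbf{a}'|,\,L)$, not $(0,\,L-|\mathbf{a}'|)$ as written; this is harmless since every subsequent estimate enlarges the domain to $(0,L)$ or $(0,\infty)$ anyway, but the limits should be fixed. Second, in the cases where $\eta\geq 2$, absorbing the boundary term $T_1\sim e^{-a_D L}$ into a bound of the form $C\,e^{-\min(\mathbf{a})L}L^{\eta-1}$ silently uses $1\leq \max(1,|\mathbf{a}|^{-(\eta-1)})\,L^{\eta-1}$, which is exactly where the hypothesis $L>|\mathbf{a}|$ is needed; you invoke that hypothesis only to locate the splitting point, so it would be worth stating explicitly that it also keeps the constant $L$-independent. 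Neither point affects the validity of the argument, and the correct observation that the declared parameter $k$ is vestigial in this statement is appreciated.
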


\subsection{Proof of Theorem \ref{thm:stoc}}\label{appendix:thm:stoc}
\begin{theorem}
There exist constants  $C_1$ and $C_2$ such that for any $W_{\max}$ satisfying $W_{\max}\geq \frac{|\widetilde{\gb}|^{2N}C_1}{(2N)!}$, and choosing $\widehat{L}=\sqrt[2N]{\frac{W_{\max}(2N)!}{C_1}}-|\widetilde{\gb}|$,
\begin{align*}
\text{Work}\big[\mathcal{M}_{\I(\widehat{L})}(u)\big]&\leq W_{\max},\\
\text{Error}\big[\mathcal{M}_{\I(\widehat{L})}(u)\big]&\leq C_2 e^{-\sqrt[2N]{\frac{W_{\max}(2N)!}{C_1}}}\left(\sqrt[2N]{\frac{W_{\max}(2N)!}{C_1}}+1\right)^{2N-1}.
\end{align*}
\end{theorem}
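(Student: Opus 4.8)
The plan is to carry out a direct counting argument that bounds separately the work inside the index set $\I(\widehat L)$ and the error committed by truncating outside it, relying on the assumed bounds \eqref{eq:ass:3} and \eqref{eq:ass:4} together with the technical Lemmae in the Appendix. Recall that, under Assumptions \ref{ass:work} and \ref{ass:error}, the quasi-optimal set in the ``only stochastic'' setting is $\I(L)=\{\betab\in\mathbb N_+^N:\ \widetilde{\gb}\cdot(\betab+\unob)+|\log(\betab+\unob)|_1\le L\}$, and I would work with the shifted variable $\betab+\unob$ throughout, so the constraint reads $\widetilde{\gb}\cdot\betab' + |\log\betab'|_1\le L$ with $\betab'\ge 2\unob$ componentwise.

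First I would estimate the work. By definition $\text{Work}[\mathcal M_{\I(L)}(u)]=\sum_{\betab\in\I(L)}\Deltab W_{\bar\alphab,\betab}^{\stoc}\le C_{\work,\bar\alphab}^{\stoc}\sum_{\betab\in\I(L)}\prod_{n=1}^N(\beta_n+1)$. Since on $\I(L)$ we have $|\log(\betab+\unob)|_1\ge 0$, the defining inequality forces $\widetilde{\gb}\cdot(\betab+\unob)\le L$, hence the sum is bounded by $\sum_{\betab':\ \widetilde{\gb}\cdot\betab'\le L}\prod_n\beta'_n$. I would then apply Lemma \ref{lemma:1} (increasing case, with $g(\mathbf x)=\prod_n x_n$ and $f(\mathbf x)=\widetilde{\gb}\cdot\mathbf x-L$) to pass to an integral over $\{\mathbf x\in(1,\infty)^N:\ \widetilde{\gb}\cdot(\mathbf x-\unob)\le L\}$, i.e. $\widetilde{\gb}\cdot\mathbf x\le L+|\widetilde{\gb}|$, and after the linear rescaling $x_n\mapsto x_n/\widetilde g_n$ invoke Lemma \ref{lemma:2} to get a bound of the form $C_1'(L+|\widetilde{\gb}|)^{2N}/(2N)!$ with $C_1'$ collecting $C_{\work,\bar\alphab}^{\stoc}$ and $\prod_n\widetilde g_n^{-2}$ (up to harmless constants from replacing $\prod(\beta_n+1)$-type bounds by $\prod x_n$). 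Setting $C_1$ accordingly and choosing $\widehat L=\sqrt[2N]{W_{\max}(2N)!/C_1}-|\widetilde{\gb}|$ makes this bound exactly $W_{\max}$; the hypothesis $W_{\max}\ge |\widetilde{\gb}|^{2N}C_1/(2N)!$ is just what guarantees $\widehat L\ge 0$ so that $\I(\widehat L)$ is nonempty and the formula is meaningful.

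Next I would estimate the error. By \eqref{eq:error_bound} the truncation error is $\le\sum_{\betab\notin\I(L)}\Deltab E_{\bar\alphab,\betab}^{\stoc}\le C_{\error,\bar\alphab}^{\stoc}\sum_{\betab\notin\I(L)}\prod_{n=1}^N e^{-\widetilde g_n(\beta_n+1)}$. On the complement, $\widetilde{\gb}\cdot(\betab+\unob)+|\log(\betab+\unob)|_1> L$. I would again use Lemma \ref{lemma:1}, now in the decreasing case, to bound the sum by $\int_{\{\mathbf x\in(1,\infty)^N:\ \widetilde{\gb}\cdot\mathbf x+|\log\mathbf x|_1>L\}}\prod_n e^{-\widetilde g_n(x_n-1)}\,d\mathbf x$ (the shift by $\unob$ inside $g$), then rescale $x_n\mapsto x_n/\widetilde g_n$ to reduce to the normalized integrand $e^{-|\mathbf x|+|\log\mathbf x|}$ over a region containing $\{|\mathbf x|>L\}$, and apply Lemma \ref{lemma:3} to obtain $\le C_2 e^{-L}(L+1)^{2N-1}$, with $C_2$ absorbing $C_{\error,\bar\alphab}^{\stoc}$, $\prod_n\widetilde g_n^{-1}$, the $e^{|\widetilde{\gb}|}$ from the shift, and the logarithmic factors generated by the change of variables. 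Substituting $L=\widehat L=\sqrt[2N]{W_{\max}(2N)!/C_1}-|\widetilde{\gb}|$ and bounding $\widehat L\le\sqrt[2N]{W_{\max}(2N)!/C_1}$ in the polynomial factor (and $e^{-\widehat L}\le e^{|\widetilde{\gb}|}e^{-\sqrt[2N]{\cdots}}$, folding the constant into $C_2$) yields precisely the claimed error bound.

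The main obstacle is bookkeeping rather than conceptual: one must carefully track how the various multiplicative constants ($C_{\work,\bar\alphab}^{\stoc}$, $C_{\error,\bar\alphab}^{\stoc}$, $\prod_n\widetilde g_n$, the $2^N$ factors implicit in replacing $\prod(\beta_n+1)$ by $\prod\beta_n$ or by $\prod x_n$, and the $e^{\pm|\widetilde{\gb}|}$ from the unit shifts in Lemma \ref{lemma:1}) propagate, so that the final $C_1$ appearing in the statement of $\widehat L$ is consistent between the work bound (where it must give equality with $W_{\max}$) and the error bound (where it appears inside the $2N$-th root). A secondary subtlety is verifying the monotonicity hypotheses of Lemma \ref{lemma:1} on the relevant regions — in particular that $f(\mathbf x)=\widetilde{\gb}\cdot\mathbf x+|\log\mathbf x|_1-L$ is increasing on $(1,\infty)^N$, which is immediate since both $\widetilde{\gb}\cdot\mathbf x$ and $\log x_n$ are increasing there — and checking that the truncation region is contained in $\{|\mathbf x|>L\}$ after rescaling, which holds because $|\log\mathbf x|_1$ contributes nonnegatively on the rescaled domain provided the rescaled coordinates exceed $1$ (true since $\beta_n+1\ge 2$ and $\widetilde g_n$ can be assumed $\le 1$ after normalization, or else handled by enlarging $C_2$).
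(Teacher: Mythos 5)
Your work estimate is sound and in fact slightly cleaner than the paper's: discarding the nonnegative term $|\log(\betab+\unob)|_1$ from the constraint enlarges the index set, which is the correct direction for an upper bound on the work, and the linear rescaling plus Lemma \ref{lemma:2} then reproduces the paper's constant $C_1=C^{\stoc,\bar{\alphab}}_{\work}\prod_{n=1}^N\widetilde{g}_n^{-2}$ and the bound $C_1(L+|\widetilde{\gb}|)^{2N}/(2N)!$. (The paper instead performs the nonlinear substitution $t_j=\widetilde{g}_j\beta_j+\log(\beta_j)$ already at this stage, but both routes land in the same place.)

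The error estimate, however, has a genuine gap. You assert that after the linear rescaling the truncation region is \emph{contained in} $\left\{|\mathbf{x}|>L\right\}$ ``because $|\log\mathbf{x}|_1$ contributes nonnegatively.'' The implication runs the other way: from $|\mathbf{x}|+|\log\mathbf{x}|_1>L$ and $|\log\mathbf{x}|_1\geq 0$ one can only conclude that $\left\{|\mathbf{x}|>L\right\}$ is contained in the truncation region, which bounds the integral from below, not from above. The truncation region genuinely contains points with $|\mathbf{x}|$ as small as roughly $L-\sum_n\log x_n$, and since $\sum_n\log x_n$ is unbounded on the domain, no fixed shift of $L$ restores the containment; the crude bound $\log x_n\leq x_n/e$ only places the region inside $\left\{|\mathbf{x}|>cL\right\}$ with $c<1$ depending on $\min_n\widetilde{g}_n$, hence a decay $e^{-cL}$ strictly worse than the claimed $e^{-L}$. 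The missing ingredient is the paper's nonlinear change of variables $t_n=\widetilde{g}_n(\beta_n+1)+\log(\beta_n+1)$, applied after multiplying and dividing the integrand by $e^{|\log(\betab+\unob)|}$: this makes the constraint exactly $|\mathbf{t}|>L$ and turns the integrand into $e^{-|\mathbf{t}|}$ times the factor $\prod_n(\beta_n+1)$ and the Jacobian, which together are dominated by $\prod_n(t_n+\widetilde{g}_n)/\widetilde{g}_n^{2}$, i.e., by $e^{|\log(\mathbf{t}+\widetilde{\gb})|}$ up to constants; a final shift then puts the integral in the exact form required by Lemma \ref{lemma:3} and yields $C_2e^{-L-|\widetilde{\gb}|}(L+1+|\widetilde{\gb}|)^{2N-1}$. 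Finally, ``$\widetilde{g}_n$ can be assumed $\leq 1$ after normalization'' is not available: the rates are fixed by Assumption \ref{ass:error}, and rescaling them would change the index set $\I(L)$ itself.
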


\begin{proof}
We start with the work estimate. The total work satisfies
\begin{align*}
\text{Work}\big[\mathcal{M}_{\I(L)}(u)\big]= \sum_{\betab\in \I(L)} \Delta W^{\stoc}_{\bar{\alphab},\betab}&\leq C_{\work}^{\stoc,\bar{\alphab}} \sum_{\left\{\betab\in \mathbb{N}_{+}^{N}:\ \widetilde{\gb}\cdot (\betab+\unob) + |\log(\betab+\unob)|\leq L\right\}} e^{|\log(\betab+\unob)| }\\
&\leq C_{\work}^{\stoc,\bar{\alphab}} \int_{\left\{\betab\in \otimes_{i=1}^N (1,\infty):\ \widetilde{\gb}\cdot \betab+|\log(\betab)|\leq L\right\}} e^{|\log(\betab+\unob)|}d\betab\\
\end{align*}
where the last inequality follows from Lemma \ref{lemma:1}.

Next, we perform the change of variable $t_j=\widetilde{g}_j \beta_j +\log(\beta_j)$, set $\beta_j=q_j(t)$, with $q_j(t)\leq \frac{t_j}{\widetilde{g}_j}$,
\begin{align*}
\text{Work}\big[\mathcal{M}_{\I(L)}(u)\big]&\leq C_{\work}^{\stoc,\bar{\alphab}} \int_{\left\{\mathbf{t}\in \otimes_{i=1}^N (\widetilde{g}_j,\infty)^N:\ |\mathbf{t}|\leq L\right\}} \prod_{i=1}^N \frac{q_j(t)+1}{\widetilde{g}_jq_j(t)+1} q_j(t) d\mathbf{t}\\
&\leq C_{\work}^{\stoc,\bar{\alphab}} \int_{\left\{\mathbf{t}\in \otimes_{i=1}^N (\widetilde{g}_j,\infty)^N:\ |\mathbf{t}|\leq L\right\}} \prod_{i=1}^N \frac{\frac{t_j}{\widetilde{g}_j}+1}{t_j+1} \frac{t_j}{\widetilde{g}_j} d\mathbf{t} \\
&\leq C_1 \int_{\left\{\mathbf{t}\in \otimes_{i=1}^N (\widetilde{g}_j,\infty)^N:\ |\mathbf{t}|\leq L\right\}} \prod_{i=1}^N (t_j+\widetilde{g}_j)d\mathbf{t}\\
&\leq C_1\int_{\left\{\widetilde{\mathbf{t}}\in \otimes_{i=1}^N (0,\infty)^N:\ |\widetilde{\mathbf{t}}|\leq L+|\widetilde{\gb}|\right\}} \widetilde{\mathbf{t}}d\widetilde{\mathbf{t}}\\
&\leq C_1 \frac{(L+|\widetilde{\gb}|)^{2N}}{(2N)!},
\end{align*}
where $C_1=C_{\work}^{\stoc,\bar{\alphab}}\prod_{i=1}^N\left(\frac{1}{\widetilde{g}^2_j}\right)$ and in the last step we used Lemma \ref{lemma:2}.

We next consider the error term,
\begin{align*}
&\text{Error}\big[\mathcal{M}_{\I(L)}(u)\big]=\sum_{\betab\notin I(L)} \Delta E_{\bar{\alphab},\betab}^{\stoc}\\
&\leq C^{\stoc,\bar{\alphab}}_{\error} \sum_{\left\{\betab\in \mathbb{N}^N_{+}:\ \widetilde{\gb}\cdot (\betab+\unob)+\log(\betab+1)>L\right\}} e^{-\widetilde{\gb}\cdot(\betab+\unob)}\\
&\leq C^{\stoc,\bar{\alphab}}_{\error}\int_{\left\{\betab\in \otimes_{i=1}^N (1,\infty)^N:\ \widetilde{\gb}\cdot (\betab+\unob) +\log(\betab+\unob)>L\right\}} e^{-\widetilde{\gb}\cdot \betab}d\betab\\
&= C^{\stoc,\bar{\alphab}}_{\error}e^{|\widetilde{\gb}|} \int_{\left\{\betab\in \otimes_{i=1}^N (1,\infty)^N:\ \widetilde{\gb}\cdot (\betab+\unob)+\log(\betab+\unob)>L\right\}} e^{-\widetilde{\gb}\cdot (\betab+\unob) +|\log(\betab+\unob)|-|\log(\betab+\unob)|}d\betab.
\end{align*}
The change of variable $\widetilde{g}_i(\beta_i+1) +\log(\beta_i+1)=t_j$, $q(t_j)=\beta_i$, with $q(t_j)\leq \frac{t_j}{\widetilde{g}_j}$, leads to

\begin{align*}
\text{Error}\big[\mathcal{M}_{\I(L)}(u)\big]&\leq C^{\stoc,\bar{\alphab}}_{\error}e^{|\widetilde{\gb}|} \int_{\left\{\mathbf{t}\in \otimes_{i=1}^N (2\widetilde{g}_j+\log(2),\infty)^N:\ |\mathbf{t}|>L\right\}} e^{-|\mathbf{t}|}\left(\prod_{i=1}^N \frac{(q(t_i)+1)^2}{\widetilde{g}_i(q(t_i)+1)+1}\right)d\mathbf{t}\\
&\leq C^{\stoc,\bar{\alphab}}_{\error}e^{|\widetilde{\gb}|} \int_{\left\{\mathbf{t}\in \otimes_{i=1}^N (2\widetilde{g}_j+\log(2),\infty)^N:\ |\mathbf{t}|>L\right\}} e^{-|\mathbf{t}|}\left(\prod_{i=1}^N \frac{t_j+\widetilde{g}_j}{\widetilde{g}_j^2}\right)d\mathbf{t}\\
&\leq C^{\stoc,\bar{\alphab}}_{\error}\left(\prod_{i=1}^N \frac{e^{\widetilde{g}_i}}{\widetilde{g}^2_j}\right) \int_{\left\{\mathbf{t}\in \otimes_{i=1}^N (2\widetilde{g}_j+\log(2),\infty)^N:\ |\mathbf{t}|>L\right\}} e^{-|\mathbf{t}|+|\log(\mathbf{t}+\mathbf{\widetilde{g}})|}d\mathbf{t}\\
&\leq C_2 \int_{\left\{\mathbf{x}\in \otimes_{i=1}^N (3\widetilde{g}_j+\log(2),\infty)^N:\ |\mathbf{x}|>L+|\widetilde{\gb}|\right\}} e^{-|\mathbf{x}|+|\log(\mathbf{x})|}d\mathbf{x}\\
&\leq C_2 \int_{\left\{\mathbf{x}\in \otimes_{i=1}^N (0,\infty)^N:\ |\mathbf{t}|>L+|\widetilde{\gb}|\right\}} e^{-|\mathbf{x}|+|\log(\mathbf{x})|}d\mathbf{x}\\
&\leq C_2 e^{-L-|\widetilde{g}|}(L+1+|\widetilde{g}|)^{2N-1},
\end{align*}
where we used Lemma \ref{lemma:3} and $C_2=C^{\stoc,\bar{\alphab}}_{\error}\left(\prod_{i=1}^N \frac{e^{2\widetilde{g}_i}}{\widetilde{g}^2_j}\right)$.

Inserting the expression of $\widehat{L}$ into the bounds for the error and work contributions, it is immediate to verify that $\text{Work}\big[\mathcal{M}_{\I(\widehat{L})}(u)\big]\leq W_{\max}$ and
\begin{align*}
\text{Error}\big[\mathcal{M}_{\I(\widehat{L})}(u)\big]&\leq C_2 e^{-\sqrt[2N]{\frac{W_{\max}(2N)!}{C_1}}}\left(\sqrt[2N]{\frac{W_{\max}(2N)!}{C_1}}+1\right)^{2N-1}.
\end{align*}
\end{proof}

\subsection{Proof of Theorem \ref{thm:stoc_det}}\label{appendix:thm:stoc_deter}
\begin{theorem}
Let $r_j=\log(2)\widetilde{r}_j$, $\gamma_j=\log(2)\widetilde{\gamma}_j$, $j=1,\dots,N$, $\mathbf{\Theta}=(\frac{\gamma_1}{r_1+\gamma_1},\dots,\frac{\gamma_D}{r_D+\gamma_D})$, $\mu=\min_n\frac{r_n}{\gamma_n}$, $\chi=\max_n \mathbf{\Theta}_n$, and $n(\mathbf{\Theta},\chi):=\#\left\{n: \mathbf{\Theta}_n=\chi\right\}$.
There exists a constant $C_W$ such that for any $W_{\max}$ satisfying $W_{\max}\geq C_W e^{\chi}$, and setting
\begin{equation}\label{eq:choice_L}
L=L(W_{\max})=\frac{1}{\chi}\left(\log\left(\frac{W_{\max}}{C_W}\right)-(n(\mathbf{\Theta},\chi)-1)\log\left(\frac{1}{\chi}\log\left(\frac{W_{\max}}{C_W}\right)\right)\right), 
\end{equation} 
the combination technique solution satisfies
\begin{align}
\text{Work}[\mathcal{M}_{\I(L(W_{\max}))}]\leq W_{\max},\label{eq:workresult}\\
\limsup_{W_{\max}\rightarrow \infty} \frac{Error[\mathcal{M}_{\I(L(W_{\max}))}]}{W_{\max}^{-\mu}(\log(W_{\max})^{(\mu+1)(n(\mathbf{\Theta},\chi)-1})}=C<\infty\label{eq:errorresult},
\end{align} 
\end{theorem}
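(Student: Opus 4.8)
The plan is to adapt the direct-counting argument from the proof of Theorem~1 in \cite{haji2016multi} to the present multi-index set: bound $\text{Work}[\mathcal{M}_{\I(L)}]$ by summing the work contributions $\Deltab W_{\alphab,\betab}$ over $\I(L)$, bound $\text{Error}[\mathcal{M}_{\I(L)}]$ by summing the error contributions $\Deltab E_{\alphab,\betab}$ over the complement of $\I(L)$ (cf.\ \eqref{eq:error_bound}), then convert both sums into integrals via Lemma~\ref{lemma:1} and estimate those integrals with Lemmae~\ref{lemma:2}--\ref{lemma:5}. Throughout I use $\prod_{n=1}^D 2^{\alpha_n\widetilde\gamma_n}=e^{\gammab\cdot\alphab}$, $\prod_{n=1}^D 2^{-\alpha_n\widetilde r_n}=e^{-\rb\cdot\alphab}$ and $\prod_{n=1}^N(\beta_n+1)=e^{|\log(\betab+\unob)|}$.

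For the \emph{work}, Assumption~\ref{ass:work} and the definition of $\I(L)$ give $\text{Work}[\mathcal{M}_{\I(L)}]\le C_{\work}\sum_{(\alphab,\betab)\in\I(L)}e^{\gammab\cdot\alphab}\,e^{|\log(\betab+\unob)|}$; since the summand and the constraint defining $\I(L)$ are both increasing in $(\alphab,\betab)$, Lemma~\ref{lemma:1} bounds the sum by the corresponding integral over $(1,\infty)^{D+N}$. Freezing $\alphab$ and using, in the stochastic block, the change of variables $t_j=\widetilde g_j\beta_j+\log\beta_j$ of the proof of Theorem~\ref{thm:stoc}, the inner integral reduces (up to constants absorbed in the bound) to $\int_{\{|\mathbf{t}|\le H(\alphab)\}}\prod_{j=1}^N(t_j+\widetilde g_j)\,d\mathbf{t}$ with $H(\alphab)=L+c_1-(\rb+\gammab)\cdot\alphab$, which Lemma~\ref{lemma:2} bounds by $(H(\alphab)+|\widetilde{\gb}|)^{2N}/(2N)!$. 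The substitution $x_j=(r_j+\gamma_j)\alpha_j$ turns $\gammab\cdot\alphab$ into $\mathbf{\Theta}\cdot\mathbf{x}$ and $(\rb+\gammab)\cdot\alphab$ into $|\mathbf{x}|$, so that, possibly after harmlessly enlarging the domain, $\text{Work}[\mathcal{M}_{\I(L)}]\le C\int_{\{|\mathbf{x}|\le L+c_2\}}e^{\mathbf{\Theta}\cdot\mathbf{x}}\,(L+c_2-|\mathbf{x}|)^{2N}\,d\mathbf{x}$; Lemma~\ref{lemma:4} with $\mathbf{a}=\mathbf{\Theta}$ (so $\max(\mathbf{a})=\chi$ and $\eta(\mathbf{\Theta},\chi)=n(\mathbf{\Theta},\chi)$) then yields $\text{Work}[\mathcal{M}_{\I(L)}]\le C_W e^{\chi L}L^{\,n(\mathbf{\Theta},\chi)-1}$ for a suitable constant $C_W$. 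With the choice \eqref{eq:choice_L} one has $e^{\chi L}=\tfrac{W_{\max}}{C_W}\big(\tfrac1\chi\log\tfrac{W_{\max}}{C_W}\big)^{-(n(\mathbf{\Theta},\chi)-1)}$ and, since $\tfrac1\chi\log\tfrac{W_{\max}}{C_W}\ge 1$ whenever $W_{\max}\ge C_W e^{\chi}$, also $L\le\tfrac1\chi\log\tfrac{W_{\max}}{C_W}$, whence $\text{Work}[\mathcal{M}_{\I(L)}]\le W_{\max}\big(\tfrac{\chi L}{\log(W_{\max}/C_W)}\big)^{n(\mathbf{\Theta},\chi)-1}\le W_{\max}$, which is \eqref{eq:workresult}.

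For the \emph{error}, Assumption~\ref{ass:error} gives $\text{Error}[\mathcal{M}_{\I(L)}]\le C_{\error}\sum_{(\alphab,\betab)\notin\I(L)}e^{-\rb\cdot\alphab}\prod_{n=1}^N e^{-\widetilde g_n(\beta_n+1)}$; Lemma~\ref{lemma:1} (decreasing case) bounds this by an integral, and the change of variables $t_j=\widetilde g_j(\beta_j+1)+\log(\beta_j+1)$ of the proof of Theorem~\ref{thm:stoc} turns the stochastic part of the integrand into a constant times $e^{-|\mathbf{t}|+|\log\mathbf{t}|}$, while the exclusion constraint becomes $(\rb+\gammab)\cdot\alphab+|\mathbf{t}|>L'$ with $L'=L+c_3$ for a fixed constant $c_3$. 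After $x_j=(r_j+\gamma_j)\alpha_j$, for which $\rb\cdot\alphab=(\unob-\mathbf{\Theta})\cdot\mathbf{x}$, I integrate the stochastic variables out for frozen $\mathbf{x}$: if $|\mathbf{x}|\ge L'$ the $\mathbf{t}$-constraint is void and the inner integral is a finite constant, so Lemma~\ref{lemma:5} with $\mathbf{a}=\unob-\mathbf{\Theta}$ (so $\min(\mathbf{a})=1-\chi$ and $\eta(\unob-\mathbf{\Theta},1-\chi)=n(\mathbf{\Theta},\chi)$) bounds the residual $\mathbf{x}$-integral by $Ce^{-(1-\chi)L'}(L')^{n(\mathbf{\Theta},\chi)-1}$; if $|\mathbf{x}|<L'$, Lemma~\ref{lemma:3} gives $\int_{\{|\mathbf{t}|>L'-|\mathbf{x}|\}}e^{-|\mathbf{t}|+|\log\mathbf{t}|}\,d\mathbf{t}\le e^{-(L'-|\mathbf{x}|)}(L'-|\mathbf{x}|+1)^{2N-1}$, and using $-(\unob-\mathbf{\Theta})\cdot\mathbf{x}+|\mathbf{x}|=\mathbf{\Theta}\cdot\mathbf{x}$ the remaining $\mathbf{x}$-integral equals $e^{-L'}\int_{\{|\mathbf{x}|<L'\}}e^{\mathbf{\Theta}\cdot\mathbf{x}}(L'+1-|\mathbf{x}|)^{2N-1}\,d\mathbf{x}$, which Lemma~\ref{lemma:4} again bounds by $Ce^{-(1-\chi)L'}(L')^{n(\mathbf{\Theta},\chi)-1}$. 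Hence $\text{Error}[\mathcal{M}_{\I(L)}]\le Ce^{-(1-\chi)L}L^{\,n(\mathbf{\Theta},\chi)-1}$. Since $\mathbf{\Theta}_n=\gamma_n/(r_n+\gamma_n)$ is increasing in $\gamma_n/r_n$, the maximum $\chi$ is attained where $r_n/\gamma_n=\mu$, so $\chi=1/(\mu+1)$ and $(1-\chi)/\chi=\mu$; writing $e^{-(1-\chi)L}=(e^{\chi L})^{-\mu}$, inserting the expression for $e^{\chi L}$ above and using $L\sim\tfrac1\chi\log W_{\max}$ gives $\text{Error}[\mathcal{M}_{\I(L(W_{\max}))}]\le C\,W_{\max}^{-\mu}(\log W_{\max})^{(\mu+1)(n(\mathbf{\Theta},\chi)-1)}$, which yields \eqref{eq:errorresult}.

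The step I expect to be the main obstacle is the handling of the coupled exclusion region $\{(\rb+\gammab)\cdot\alphab+|\mathbf{t}|>L'\}$ in the error estimate. A naive decoupling, such as replacing it by $\{|\mathbf{x}|>L'/2\}\cup\{|\mathbf{t}|>L'/2\}$, loses a factor $2$ in the decay exponent and only gives the suboptimal rate $W_{\max}^{-\mu/2}$; one must instead integrate the stochastic variables out first for a fixed spatial index and then feed the resulting profile $(L'-|\mathbf{x}|+1)^{2N-1}$ into Lemma~\ref{lemma:4} and Lemma~\ref{lemma:5}, which is exactly what produces the sharp exponent $\mu$ together with the logarithmic power $(\mu+1)(n(\mathbf{\Theta},\chi)-1)$. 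A secondary, purely technical difficulty is the careful bookkeeping of the two stochastic changes of variables (Jacobians and logarithmic terms) so that the weight $\prod_n(\beta_n+1)$ is absorbed exactly into $\prod_j(t_j+\widetilde g_j)$ for the work and into $e^{|\log\mathbf{t}|}$ for the error; these manipulations are already carried out in the proof of Theorem~\ref{thm:stoc} and are reused here verbatim.
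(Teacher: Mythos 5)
Your proposal is correct and follows essentially the same route as the paper's proof in Appendix~\ref{appendix:thm:stoc_deter}: direct counting, Lemma~\ref{lemma:1} to pass from sums to integrals, the changes of variables $t_j=\widetilde g_j(\beta_j+1)+\log(\beta_j+1)$ and $x_j=(r_j+\gamma_j)\alpha_j$, the identical split of the exclusion region into $\{(\rb+\gammab)\cdot\alphab>L\}$ (handled by Lemma~\ref{lemma:5} with $\mathbf{\Phi}=\unob-\mathbf{\Theta}$) and its complement (stochastic tail via Lemma~\ref{lemma:3}, then Lemma~\ref{lemma:4}), and the same identities $\chi=1/(\mu+1)$, $(1-\chi)/\chi=\mu$ to conclude. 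The only (immaterial) deviation is in the work bound, where you integrate the stochastic block first via Lemma~\ref{lemma:2} and then apply Lemma~\ref{lemma:4} with $k=2N$, whereas the paper integrates the spatial block first with $k=0$ and observes that the remaining stochastic integral is uniformly bounded; both orders yield the same estimate $C_We^{\chi L}L^{n(\mathbf{\Theta},\chi)-1}$.
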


\begin{proof}
Due to Assumption \ref{ass:work}, the total work associated to $\I(L)$ is bounded by
\begin{align*}
&\text{Work}\big[\mathcal{M}_{\I(L)}(u)\big])=\sum_{(\alphab,\betab)\in \I(L)} \Delta W_{\alphab,\betab} \\
& \leq C_{\work} \sum_{\left\{(\alphab,\betab)\in \mathbb{N}_{+}^{D+N}:\ (\rb+\gammab)\cdot \alphab+\widetilde{\gb}\cdot (\betab+\unob) +|\log(\betab+\unob)|\leq L\right\}} e^{\gammab\cdot \alphab + \log |\betab+\unob|}\\
&\leq C_{\work} \int_{\left\{(\alphab,\betab)\in (1,\infty)^{D+N}:\ (\rb+\gammab)\cdot (\alphab-\unob)+\widetilde{\gb}\cdot \betab +|\log(\betab)|\leq L\right\}} e^{\gammab\cdot \alphab + \log |\betab+\unob|}d\alphab d\betab\\
&= C_{\work}\left(\prod_{i=1}^D \frac{e^{\gamma_i}}{r_i+\gamma_i}\right) \int_{\left\{(\bar{\alphab},\betab)\in (0,\infty)^{D}\times (1,\infty)^{N}:\ |\bar{\alphab}|+\widetilde{\gb}\cdot \betab +|\log(\betab)|\leq L\right\}} e^{\mathbf{\Theta}\cdot  \bar{\alphab} + \log |\betab+\unob|}d\bar{\alphab} d\betab,
\end{align*}
where we used Lemma \ref{lemma:1}, performed the change of variable $\bar{\alpha}_i=(\alpha_i-1)(r_i+\gamma_i)$ and defined $\Theta_i=\frac{\gamma_i}{r_i+\gamma_i}$.
The change of variable $\bar{\beta}_i=\beta_i\widetilde{g}_i$ leads to
\begin{align}\label{eq:Thm2_intermediate}
\medmuskip=-1mu
\thinmuskip=-1mu
\thickmuskip=-1mu
\nulldelimiterspace=0.9pt
\scriptspace=0.9pt 
\arraycolsep0.9em 
\text{Work}\big[\mathcal{M}_{\I(L)}(u)\big]\leq  D_{\work} \int_{\left\{(\bar{\alphab},\bar{\betab})\in (0,\infty)^{D}\times \otimes_{i=1}^N (\widetilde{g}_i,\infty):\ |\bar{\alphab}|+|\bar{\betab}| + |\log(\frac{\bar{\betab}}{\widetilde{\gb}})|\leq L\right\}} e^{\mathbf{\Theta}\cdot  \bar{\alphab} + |\log\left(\frac{\bar{\betab}}{\widetilde{\gb}}+\bm{1}\right)|} d\bar{\alphab} d\bar{\betab},
\end{align}
where $D_{\work}:=C_{\work}\left(\prod_{i=1}^D \frac{e^{\gamma_i}}{r_i+\gamma_i}\right)\left(\prod_{i=1}^N\frac{1}{\widetilde{g}_i}\right)$.
Denoting with $\text{Int}$ the right hand side of \eqref{eq:Thm2_intermediate} and using Lemma \ref{lemma:4} we get

\begin{align}\label{eq:Thm2_intermediate2}
\medmuskip=-1mu
\thinmuskip=-1mu
\thickmuskip=-1mu
\nulldelimiterspace=0.9pt
\scriptspace=0.9pt 
\arraycolsep0.9em 
&\text{Int}=\int_{\left\{\bar{\betab}\in\otimes_{i=1}^N (\widetilde{\gb}_i,\infty):\ |\bar{\betab}| + |\log(\frac{\bar{\betab}}{\widetilde{\gb}}))|\leq L\right\}} e^{|\log\left(\frac{\bar{\betab}}{\widetilde{\gb}}+\unob\right)|}\int_{\left\{\bar{\alphab} \in (0,\infty)^D:\ |\alphab|\leq  L-|\bar{\betab}|-|\log(\frac{\bar{\betab}}{\widetilde{\gb}})|\right\}} e^{\mathbf{\Theta}\cdot \bar{\alphab}} d\bar{\alphab} d\bar{\betab}\nonumber\\
&\leq \int_{\left\{\bar{\betab}\in\otimes_{i=1}^N (\widetilde{\gb}_i,\infty):\ |\bar{\betab}| + |\log(\frac{\bar{\betab}}{\widetilde{\gb}})|\leq L\right\}} \mathcal{U}_D(\mathbf{\Theta},0)e^{|\log\left(\frac{\betab}{\widetilde{\gb}}+\unob\right)|+\chi(L-|\bar{\betab}|-|\log(\frac{\bar{\betab}}{\widetilde{\gb}})|)}L^{\eta(\Theta,\chi)-1}d\bar{\betab}\nonumber\\
&\leq \mathcal{U}_D(\mathbf{\Theta},0) e^{\chi L}\int_{\left\{\bar{\betab}\in\otimes_{i=1}^N (\widetilde{\gb}_i,\infty):\ |\bar{\betab}| + |\log(\frac{\bar{\betab}}{\widetilde{\gb}})|\leq L\right\}} e^{|\log\left(\frac{\bar{\betab}}{\widetilde{\gb}}+\unob\right)|}e^{-\chi(|\bar{\betab}|+|\log(\frac{\bar{\betab}}{\widetilde{\gb}})|)}L^{\eta(\Theta,\chi)-1}d\bar{\betab}\nonumber\\
&\leq \mathcal{U}_D(\mathbf{\Theta},0) e^{\chi L}L^{\eta(\Theta,\chi)-1}\int_{\left\{\bar{\betab}\in\otimes_{i=1}^N (\widetilde{\gb}_i,\infty):\ |\bar{\betab}| + |\log(\frac{\bar{\betab}}{\widetilde{\gb}})|\leq L\right\}} e^{|\log\left(\frac{\bar{\betab}}{\widetilde{\gb}}+\unob\right)|}e^{-\chi(|\bar{\betab}|+|\log(\frac{\bar{\betab}}{\widetilde{\gb}})|)}d\bar{\betab}
\end{align}
Plugging \eqref{eq:Thm2_intermediate2} into \eqref{eq:Thm2_intermediate}

we conclude that 
\begin{align*}
\text{Work}\big[\mathcal{M}_{\I(L)}(u)\big]\leq C_{W}L^{\eta(\mathbf{\Theta},\chi)-1}e^{\chi L},
\end{align*}
where $C_W=D_{\work}\mathcal{U}_D(\mathbf{\Theta},0)\int_{\left\{\bar{\betab}\in\otimes_{i=1}^N (\widetilde{\gb}_i,\infty):\ |\bar{\betab}| + |\log(\frac{\bar{\betab}}{\widetilde{\gb}})|\leq L\right\}} e^{|\log(\frac{\bar{\betab}}{\widetilde{\gb}}+\unob)|} e^{-\chi(|\betab|+|\log(\frac{\bar{\betab}}{\widetilde{\gb}})|)}d\bar{\betab}<\infty$ as the latter integral is bounded for every $L$.
We now focus on the error estimate.
\begin{equation}
\begin{aligned}\label{eq:error_estimate}
\text{Error}\big[\mathcal{M}_{\I(\widehat{L})}(u)\big]&\leq C_{\error}\sum_{\left\{(\alphab,\betab)\in \mathbb{N}_{+}^{D+N}:\ (\rb+\gammab)\cdot \alphab + \widetilde{\gb}\cdot (\betab+\unob) + |\log(\betab+\unob)|>L\right\}} e^{-\rb\cdot\alphab -\widetilde{\gb}\cdot (\betab+\unob)}\\
& =C_{\error}\bigg(\sum_{\left\{(\alphab,\betab)\in \mathbb{N}_{+}^{D+N}:\ (\rb+\gammab)\cdot \alphab>L\right\}}e^{-\rb\cdot\alphab -\widetilde{\gb}\cdot (\betab+\unob)}\\
 &+\sum_{\left\{\alphab\in \mathbb{N}_{+}^D (\rb+\gammab)\cdot \alphab \leq L\right\}} e^{-\rb\cdot\alphab}\sum_{\left\{\betab\in \mathbb{N}_{+}^N \widetilde{\gb}\cdot (\betab+\unob) + |\log(\betab+\unob)|> L -(\rb+\gammab)\cdot\alphab \right\}} e^{ -\widetilde{\gb}\cdot (\betab+\unob)}.\bigg)
\end{aligned}
\end{equation}
Using the change of variable $\bar{\alpha}_i=(r_i+\gamma_i)\alpha_i$, the first term can be bounded as
\begin{align}\label{eq:first_estimate_final}
\sum_{\left\{(\alphab,\betab)\in \mathbb{N}_{+}^{D+N}:\ (\rb+\gammab)\cdot \alphab>L\right\}}&e^{-\rb\cdot\alphab -\widetilde{\gb}\cdot (\betab+\unob)}=\left(\sum_{\betab\in \mathbb{N}_{+}^N} \prod_{i=1}^N e^{-\widetilde{g}_j(\beta_j+1)}\right)\left(\sum_{\alphab \in \mathbb{N}_{+}^N:\ (\rb+\gammab)\cdot \alphab>L} e^{-\rb\cdot\alphab}\right)\nonumber\\
&= \left(\prod_{i=1}^N \frac{e^{-2\widetilde{g}_i}}{1-e^{-\widetilde{g}_i}}\right)\left(\sum_{\alphab \in \mathbb{N}_{+}^N:\ (\rb+\gammab)\cdot \alphab>L} e^{-\rb\cdot\alphab}\right)\nonumber\\
&\leq \left(\prod_{i=1}^N \frac{e^{-2\widetilde{g}_i}}{1-e^{-\widetilde{g}_i}}\right)\int_{\alphab \in (1,\infty)^D:\ (\rb+\gammab)\cdot \alphab>L} e^{-\rb\cdot\alphab+\rb}d\alphab\nonumber\\
&\leq \left(\prod_{i=1}^N \frac{e^{-2\widetilde{g}_i}}{1-e^{-\widetilde{g}_i}}\right)\left(\prod_{i=1}^D \frac{e^{r_i}}{r_i+\gamma_i}\right) \int_{\bar{\alphab} \in \otimes_{i=1}^D(r_i+\gamma_i,\infty):\ |\bar{\alphab}|>L} e^{-\mathbf{\Phi}\cdot \bar{\alphab}}d\bar{\alphab}\nonumber\\
& \leq C_{E,1} e^{-\delta L}L^{n(\Phi,\delta)-1},
\end{align}
where we used Lemma \ref{lemma:5} with 
\[\mathbf{\Phi}_i=\frac{r_i}{r_i+\gamma_i},\quad \delta=\min_{i=1,\dots,N} \mathbf{\Phi}_i,\quad \text{and}\quad C_{E,1}:=\left(\prod_{i=1}^N \frac{e^{-2\widetilde{g}_i}}{1-e^{-\widetilde{g}_i}}\right)\left(\prod_{i=1}^D \frac{e^{r_i}}{r_i+\gamma_i}\right)  \mathcal{B}_D(\mathbf{\Phi}).\]

Concerning the second term, we first define $H:=(\rb+\gammab)\cdot \alphab$ and $\widehat{L}:=L-H$, so that
\begin{align*}
&\sum_{\left\{\betab\in \mathbb{N}_{+}^N:\ \widetilde{\gb}\cdot (\betab+\unob)+|\log(\betab+\unob)|>\widehat{L} \right\}}e^{-\widetilde{\gb}\cdot (\betab+\unob)}
\leq \int_{\left\{\betab\in \otimes_{i=1}^N(1,\infty):\ \widetilde{\gb}\cdot (\betab+\unob)+|\log(\betab+\unob)|>\widehat{L} \right\}}e^{-\widetilde{\gb}\cdot \betab}d\betab\nonumber\\
&= e^{|\widetilde{\gb}|}\int_{\left\{\betab\in \otimes_{i=1}^N(1,\infty):\ \widetilde{\gb}\cdot (\betab+\unob) +|\log(\betab+\unob)|>\widehat{L} \right\}}e^{-\widetilde{\gb}\cdot (\betab+\unob)+|\log(\betab+\unob)|-|\log(\betab+\unob)|}d\betab
\end{align*}
Setting $t_i=\widetilde{g}_i(\beta_i+1)+\log(\beta_i+1)=t_i$, $q(t_i)=\beta_i$, with $q(t_i)\leq \frac{t_i}{\widetilde{g}_i}-1$, leads to
{\footnotesize
\begin{align}\label{eq:intermediate_estimate}
\sum_{\left\{\betab\in \mathbb{N}_{+}^N:\ \widetilde{\gb}\cdot \betab+|\log(\betab+\unob)|>\widehat{L} \right\}}e^{-\widetilde{\gb}\cdot (\betab+\unob)}&\leq \left(\prod_{i=1}^N e^{\widetilde{g}_i}\right)  \int_{\left\{\mathbf{t}\in \otimes_{i=1}^D (2\widetilde{g}_i+\log(2),\infty):\ |\mathbf{t}|>\widehat{L}\right\}} e^{-|\mathbf{t}|}\left(\prod_{i=1}^N \frac{(q(t_i)+1)^2}{\widetilde{g}_j(q(t_i)+1)}\right)d\mathbf{t}\nonumber\\
&\leq \left(\prod_{i=1}^N \frac{e^{\widetilde{g}_i}}{\widetilde{g}_i^2}\right)
\int_{\left\{\mathbf{t}\in \otimes_{i=1}^D (2\widetilde{g}_i+\log(2),\infty):\ |\mathbf{t}|>\widehat{L}\right\}} e^{-|\mathbf{t}|+|\log(\mathbf{t})|}d\mathbf{t}\nonumber\\
&\leq C_{E,2} e^{-\widehat{L}}(\widehat{L}+1)^{2N-1},
\end{align}
}\normalsize
where we used Lemma \ref{lemma:3} in the last step and set $C_{E,2}=\left(\prod_{i=1}^N \frac{e^{\widetilde{g}_i}}{\widetilde{g}_i^2}\right)$.
Inserting \eqref{eq:intermediate_estimate} into the second term of \eqref{eq:error_estimate} and using Lemma \ref{lemma:4},
\begin{equation*}\label{eq:second_estimate_final}
\begin{aligned}
&\sum_{\left\{\alphab\in \mathbb{N}_{+}^D (\rb+\gammab)\cdot \alphab \leq L\right\}} e^{-\rb\cdot\alphab}\sum_{\left\{\betab\in \mathbb{N}_{+}^N \widetilde{\gb}\cdot (\betab+\unob) + |\log(\betab+\unob)|> L -(\rb+\gammab)\cdot\alphab \right\}} e^{ -\widetilde{\gb}\cdot (\betab+\unob)}\\
&\leq C_{E,2}\sum_{\left\{\alphab\in \mathbb{N}_{+}^D (\rb+\gammab)\cdot \alphab \leq L\right\}} e^{-L+\gammab\cdot\alphab}(L+1-(\rb+\gammab)\cdot\alphab)^{2N-1}\\
&\leq C_{E,2}\int_{\left\{\alphab\in (1,\infty)^D:\ (\rb+\gammab)\cdot (\alphab-\unob) \leq L\right\}} e^{-L+\gammab\cdot\alphab}(L+1-(\rb+\gammab)\cdot(\alphab-\unob))^{2N-1}\\
&\leq C_{E,2}\left(\prod_{i=1}^D \frac{e^{\gamma_i}}{\gamma_i+r_i}\right)e^{-L} \int_{\left\{\mathbf{x}\in (0,\infty)^D:\ |\mathbf{x}| \leq L\right\}} e^{ \Theta\cdot\mathbf{x}}(L+1-|\mathbf{x}|)^{2N-1}d\mathbf{x}\\
&\leq C_{E,3}e^{(\chi-1)L}L^{n(\mathbf{\Theta},\chi)-1},
\end{aligned}
\end{equation*}
where $C_{E,3}=C_{E,2}\left(\prod_{i=1}^D \frac{e^{\gamma_i}}{\gamma_i+r_i}\right) \mathcal{U}_D(\mathbf{\Theta},2N-1)$.
Since $\delta=1-\chi$, $n(\mathbf{\Theta},\chi)=n(\mathbf{\Phi},\delta)$ and using the bounds for the two terms of \eqref{eq:error_estimate}, we conclude 
\[\text{Error}\big[\mathcal{M}_{\I(L)}(u)\big]\leq C_{E,1}e^{-\delta L}L^{\eta(\mathbf{\Phi},\delta)-1}+C_{E,3} e^{(\chi-1)L}L^{n(\mathbf{\Theta},\chi)-1}=C_{E,4}e^{(\chi-1) L}L^{n(\mathbf{\Theta},\chi)-1}.\]
Finally, a direct calculation shows that \eqref{eq:choice_L} leads to \eqref{eq:workresult} and \eqref{eq:errorresult}.
\end{proof}


\begin{thebibliography}{10}

\bibitem{babuvska2010stochastic}
I.~Babu{\v{s}}ka, F.~Nobile, and R.~Tempone.
\newblock A stochastic collocation method for elliptic partial differential
  equations with random input data.
\newblock {\em SIAM review}, 52(2):317--355, 2010.

\bibitem{back2011stochastic}
J.~B{\"a}ck, F.~Nobile, L.~Tamellini, and R.~Tempone.
\newblock Stochastic spectral {G}alerkin and collocation methods for {PDE}s
  with random coefficients: a numerical comparison.
\newblock In {\em Spectral and high order methods for partial differential
  equations}, pages 43--62. Springer, 2011.

\bibitem{beck2019iga}
J.~Beck, L.~Tamellini, and R.~Tempone.
\newblock {IGA}-based multi-index stochastic collocation for random {PDE}s on
  arbitrary domains.
\newblock {\em Computer Methods in Applied Mechanics and Engineering},
  351:330--350, 2019.

\bibitem{brezzi1980finite}
F.~Brezzi, J.~Rappaz, and P.-A. Raviart.
\newblock Finite dimensional approximation of nonlinear problems: Part i:
  Branches of nonsingular solutions.
\newblock {\em Numerische Mathematik}, 36(1):1--25, 1980.

\bibitem{bungartz2004sparse}
H.-J. Bungartz and M.~Griebel.
\newblock Sparse grids.
\newblock {\em Acta numerica}, 13:147--269, 2004.

\bibitem{chkifa2014high}
A.~Chkifa, A.~Cohen, and C.~Schwab.
\newblock High-dimensional adaptive sparse polynomial interpolation and
  applications to parametric {PDE}s.
\newblock {\em Foundations of Computational Mathematics}, 14:601--633, 2014.

\bibitem{cohen2015approximation}
A.~Cohen and R.~DeVore.
\newblock Approximation of high-dimensional parametric {PDE}s.
\newblock {\em Acta Numerica}, 24:1--159, 2015.

\bibitem{cohen2010convergence}
A.~Cohen, R.~DeVore, and C.~Schwab.
\newblock Convergence rates of best n-term {G}alerkin approximations for a
  class of elliptic {PDE}s.
\newblock {\em Foundations of Computational Mathematics}, 10(6):615--646, 2010.

\bibitem{cohen2011analytic}
A.~Cohen, R.~Devore, and C.~Schwab.
\newblock Analytic regularity and polynomial approximation of parametric and
  stochastic elliptic {PDE}s.
\newblock {\em Analysis and Applications}, 9(01):11--47, 2011.

\bibitem{ern2004theory}
A.~Ern and J.L. Guermond.
\newblock {\em Theory and Practice of Finite Elements}.
\newblock Applied Mathematical Sciences. Springer New York, 2004.

\bibitem{ernst2018convergence}
O.~G. Ernst, B.~Sprungk, and L.~Tamellini.
\newblock Convergence of sparse collocation for functions of countably many
  gaussian random variables (with application to elliptic pdes).
\newblock {\em SIAM Journal on Numerical Analysis}, 56(2):877--905, 2018.

\bibitem{gerstner2003dimension}
T.~Gerstner and M.~Griebel.
\newblock Dimension--adaptive tensor--product quadrature.
\newblock {\em Computing}, 71(1):65--87, 2003.

\bibitem{combination_te}
M.~Griebel and H.~Harbrecht.
\newblock On the convergence of the combination technique.
\newblock In {\em Sparse Grids and Applications - Munich 2012}, pages 55--74,
  Cham, 2014. Springer International Publishing.

\bibitem{griebel1990combination}
M.~Griebel, M.~Schneider, and C.~Zenger.
\newblock A combination technique for the solution of sparse grid problems.
\newblock {\em Iterative Methods in Linear Algebra}, pages 263--281, 1992.

\bibitem{guignard2018posteriori}
D.~Guignard and F.~Nobile.
\newblock A posteriori error estimation for the stochastic collocation finite
  element method.
\newblock {\em SIAM Journal on Numerical Analysis}, 56(5):3121--3143, 2018.

\bibitem{doi:10.1137/19M1294952}
P.~A. Guth, V.~Kaarnioja, F.~Y. Kuo, C.~Schillings, and I.~H. Sloan.
\newblock A {Q}uasi-{M}onte {C}arlo method for optimal control under
  uncertainty.
\newblock {\em SIAM/ASA Journal on Uncertainty Quantification}, 9(2):354--383,
  2021.

\bibitem{haji2016multi2}
A.-L. Haji-Ali, F.~Nobile, L.~Tamellini, and R.~Tempone.
\newblock Multi-index stochastic collocation convergence rates for random
  {PDE}s with parametric regularity.
\newblock {\em Foundations of Computational Mathematics}, 16(6):1555--1605,
  2016.

\bibitem{haji2016multi}
A.-L. Haji-Ali, F.~Nobile, L.~Tamellini, and R.~Tempone.
\newblock Multi-index stochastic collocation for random {PDE}s.
\newblock {\em Computer Methods in Applied Mechanics and Engineering},
  306:95--122, 2016.

\bibitem{MIMC}
A.-L. Haji-Ali, F.~Nobile, and R.~Tempone.
\newblock Multi-index {M}onte {C}arlo: when sparsity meets sampling.
\newblock {\em Numerische Mathematik}, 132(4):767--806, 2016.

\bibitem{harbrecht2012multilevel}
H.~Harbrecht, M.~Peters, and M.~Siebenmorgen.
\newblock On multilevel quadrature for elliptic stochastic partial differential
  equations.
\newblock In {\em Sparse grids and applications}, pages 161--179. Springer,
  2012.

\bibitem{hinze2008optimization}
M.~Hinze, R.~Pinnau, M.~Ulbrich, and S.~Ulbrich.
\newblock {\em Optimization with PDE constraints}, volume~23.
\newblock Springer Science \& Business Media, 2008.

\bibitem{kouri2012approach}
D.~P. Kouri.
\newblock {\em An approach for the adaptive solution of optimization problems
  governed by partial differential equations with uncertain coefficients}.
\newblock Rice University, 2012.

\bibitem{kouri2013trust}
D.~P. Kouri, M.~Heinkenschloss, D.~Ridzal, and B.~G. van Bloemen~Waanders.
\newblock A trust-region algorithm with adaptive stochastic collocation for
  {PDE} optimization under uncertainty.
\newblock {\em SIAM Journal on Scientific Computing}, 35(4):A1847--A1879, 2013.

\bibitem{kouri2018inexact}
D.~P. Kouri and D.~Ridzal.
\newblock Inexact trust-region methods for {PDE}-constrained optimization.
\newblock In {\em Frontiers in PDE-Constrained Optimization}, pages 83--121.
  Springer, 2018.

\bibitem{kouri2018optimization}
D.~P. Kouri and A.~Shapiro.
\newblock Optimization of {PDE}s with uncertain inputs.
\newblock In {\em Frontiers in PDE-Constrained Optimization}, pages 41--81.
  Springer, 2018.

\bibitem{doi:10.1137/16M1086613}
D.~P. Kouri and T.~M. Surowiec.
\newblock Existence and optimality conditions for risk-averse {PDE}-constrained
  optimization.
\newblock {\em SIAM/ASA Journal on Uncertainty Quantification}, 6(2):787--815,
  2018.

\bibitem{kroese2013handbook}
D.P. Kroese, T.~Taimre, and Z.I. Botev.
\newblock {\em Handbook of {M}onte {C}arlo Methods}.
\newblock Wiley Series in Probability and Statistics. Wiley, 2013.

\bibitem{kunoth2013analytic}
A.~Kunoth and C.~Schwab.
\newblock Analytic regularity and {GPC} approximation for control problems
  constrained by linear parametric elliptic and parabolic {PDE}s.
\newblock {\em SIAM Journal on Control and Optimization}, 51(3):2442--2471,
  2013.

\bibitem{kunoth2016sparse}
A.~Kunoth and C.~Schwab.
\newblock Sparse adaptive tensor {G}alerkin approximations of stochastic
  {PDE}-constrained control problems.
\newblock {\em SIAM/ASA Journal on Uncertainty Quantification},
  4(1):1034--1059, 2016.

\bibitem{lord_powell_shardlow_2014}
G.~J. Lord, C.~E. Powell, and T.~Shardlow.
\newblock {\em An Introduction to Computational Stochastic {PDE}s}.
\newblock Cambridge Texts in Applied Mathematics. Cambridge University Press,
  2014.

\bibitem{martello1990knapsack}
S.~Martello and P.~Toth.
\newblock {\em Knapsack problems: algorithms and computer implementations}.
\newblock John Wiley \& Sons, Inc., 1990.

\bibitem{Matthieu}
M.~Martin, S.~Krumscheid, and F.~Nobile.
\newblock Complexity analysis of stochastic gradient methods for
  {PDE}-constrained optimal control problems with uncertain parameters.
\newblock {\em ESAIM: M2AN}, 55(4):1599--1633, 2021.

\bibitem{martinez2018optimal}
J.~Mart{\'\i}nez-Frutos and F.~P. Esparza.
\newblock {\em Optimal Control of {PDE}s Under Uncertainty: An Introduction
  with Application to Optimal Shape Design of Structures}.
\newblock Springer, 2018.

\bibitem{nobile2016convergence}
F.~Nobile, L.~Tamellini, and R.~Tempone.
\newblock Convergence of quasi-optimal sparse-grid approximation of
  {H}ilbert-space-valued functions: application to random elliptic {PDE}s.
\newblock {\em Numerische Mathematik}, 134(2):343--388, 2016.

\bibitem{vanzan}
F.~Nobile and T.~Vanzan.
\newblock Preconditioners for robust optimal control problems under
  uncertainty.
\newblock {\em Numerical Linear Algebra with Applications}, page e2472, 2022.

\bibitem{pflaum}
Christoph P.
\newblock {\em Diskretisierung elliptischer Differentialgleichungen mit
  d\"{u}unnen Gittern}.
\newblock PhD thesis, 1996.

\bibitem{peherstorfer2018survey}
B.~Peherstorfer, K.~Willcox, and M.~Gunzburger.
\newblock Survey of multifidelity methods in uncertainty propagation,
  inference, and optimization.
\newblock {\em Siam Review}, 60(3):550--591, 2018.

\bibitem{reichel1990newton}
L.~Reichel.
\newblock Newton interpolation at {L}eja points.
\newblock {\em BIT}, 30(2):332--346, 1990.

\bibitem{doi:10.1137/16M1082561}
Pieterjan Robbe, Dirk Nuyens, and Stefan Vandewalle.
\newblock A multi-index quasi--monte carlo algorithm for lognormal diffusion
  problems.
\newblock {\em SIAM Journal on Scientific Computing}, 39(5):S851--S872, 2017.

\bibitem{schwab2011sparse}
C.~Schwab and C.~J. Gittelson.
\newblock Sparse tensor discretizations of high-dimensional parametric and
  stochastic {PDE}s.
\newblock {\em Acta Numerica}, 20:291--467, 2011.

\bibitem{seidler2022dimension}
U.~Seidler and M.~Griebel.
\newblock A dimension-adaptive combination technique for uncertainty
  quantification.
\newblock {\em arXiv preprint arXiv:2204.05574}, 2022.

\bibitem{shapiro2021lectures}
A.~Shapiro, D.~Dentcheva, and A.~Ruszczynski.
\newblock {\em Lectures on stochastic programming: modeling and theory}.
\newblock SIAM, 2021.

\bibitem{teckentrup2015multilevel}
A.~Teckentrup, P.~Jantsch, C.~G Webster, and M.~Gunzburger.
\newblock A multilevel stochastic collocation method for partial differential
  equations with random input data.
\newblock {\em SIAM/ASA Journal on Uncertainty Quantification},
  3(1):1046--1074, 2015.

\bibitem{trefethen2008gauss}
L.~N. Trefethen.
\newblock Is {G}auss quadrature better than {C}lenshaw--{C}urtis?
\newblock {\em SIAM review}, 50(1):67--87, 2008.

\bibitem{van2019robust}
A.~Van~Barel and S.~Vandewalle.
\newblock Robust optimization of {PDE}s with random coefficients using a
  multilevel {M}onte {C}arlo method.
\newblock {\em SIAM/ASA Journal on Uncertainty Quantification}, 7(1):174--202,
  2019.

\end{thebibliography}
\end{document}